\newtheorem{theorem}{Theorem}
\newtheorem{corollary}[theorem]{Corollary}
\theoremstyle{definition}
\newtheorem{definition}{Definition}
\newtheorem{assumption}{Assumption}
\DeclareMathOperator{\ex}{\mathbb{E}}
\DeclareMathOperator{\diag}{diag}
\newcommand{\Gcal}{\mathcal{G}}
\newcommand{\Vcal}{\mathcal{V}}
\newcommand{\Ecal}{\mathcal{E}}
\newcommand{\Acal}{\mathcal{A}}
\newcommand{\PP}{\mathrm{PP}}
\newcommand{\Rbb}{\mathbb{R}}
\newcommand{\Xbm}{\bm{X}}
\newcommand{\Nbm}{\bm{N}}
\newcommand{\Rbm}{\bm{R}}
\newcommand{\Jbm}{\bm{J}}
\newcommand{\bbm}{\bm{b}}
\newcommand{\cbm}{\bm{c}}
\newcommand{\onebm}{\bm{1}}
\newcommand{\xbm}{\bm{x}}
\newcommand{\tbm}{\bm{t}}
\newcommand{\taubm}{\bm{\tau}}
\newcommand{\Xbmh}{\bm{X}^h}
\newcommand{\Xbmhl}{\bm{X}^{h,l}}
\newcommand{\Xbmhat}{\hat{\bm{X}}}
\newcommand{\dgT}{\delta_{g,T,\beta}}
\newcommand{\CgT}{C_{g,T}}
\newcommand{\DgT}{D_{g,T,\beta}}
\newcommand{\influgT}{\mu_{g,T}}
\newcommand{\sgT}{\sigma_{g,T}^2}
\newcommand{\influ}{\mu}
\newcommand{\influhat}{\hat{\mu}}
\newcommand{\xhat}{\hat{x}}
\title
{A Jump Stochastic Differential Equation Approach for Influence Prediction on Information Propagation Networks}
\author{
Yaohua Zang\thanks{Department of Mathematics, Zhejiang University, Hangzhou, Zhejiang, China 
  (\url{yhchuang@zju.edu.cn}).}
\and Gang Bao\thanks{Department of Mathematics, Zhejiang University, Hangzhou, Zhejiang, China
  (\url{baog@zju.edu.cn}).}
\and Xiaojing Ye\thanks{Department of Mathematics and Statistics, Georgia State University, Atlanta, GA, USA
  (\url{xye@gsu.edu}).}
\and Hongyuan Zha\thanks{School of Computational Science and Engineering, Georgia Institute of Technology, Atlanta, GA, USA
  (\url{zha@cc.gatech.edu}).}
\and Haomin Zhou\thanks{School of Mathematics, Georgia Institute of Technology, Atlanta, GA, USA
  (\url{hmzhou@math.gatech.edu}).}
}
\date{} 
\begin{document}

\maketitle

\begin{abstract}
We propose a novel problem formulation of continuous-time information propagation on heterogenous networks based on jump stochastic differential equations (SDE). The structure of the network and activation rates between nodes are naturally taken into account in the SDE system. This new formulation allows for efficient and stable algorithm for many challenging information propagation problems, including estimations of individual activation probability and influence level, by solving the SDE numerically. To this end, we develop an efficient numerical  algorithm incorporating variance reduction; furthermore, we provide theoretical bounds for its sample complexity. Moreover, we show that the proposed jump SDE approach can be applied to a much larger class of critical information propagation problems with more complicated settings. Numerical experiments on a variety of synthetic and real-world propagation networks show that the proposed method is more accurate and efficient compared with the state-of-the-art methods. 
\\

\noindent
\textbf{Keywords:} Propagation network, heterogenous network, influence prediction, complexity
\end{abstract}




\section{Introduction}
\label{sec:intro}

Information propagation on networks is a prevalent phenomenon in real-world \cite{Boguna:2002a,Newman:2010a,Pastor-Satorras:2015a}
Examples of information propagation include news spreading on social media \cite{Du:2013a,Farajtabar:2016b,Vergeer:2013a}, viral marketing \cite{Kempe:2003a,Kempe:2005a,Wortman:2008a}, computer malware spread, and epidemic of contagious diseases \cite{Bodo:2016a,Miller:2014a,Pastor-Satorras:2015a,Sahneh:2011a}.
More specifically, for instance, a piece of information (such as news) can be retweeted by users (nodes) on the Twitter social network formed by their followee-follower relationships.
In this case, we call a node activated, become active, or infected, if the user participates to tweet, and the followers of this user get activated if they retweet his/her tweet later.
By this, the activation process gradually progresses and the tweet spreads out.
Such information propagation behaves very similarly as epidemic spread where a type of virus can infect an individual (human, animal, or plant) and spreads to others upon their close contact.

To begin with, we describe the basic information propagation model on a network.
Let $\Gcal=(\Vcal,\Ecal)$ be a given network, i.e., directed graph, where $\Vcal=\{1,\dots,n\}$ is the node set and $\Ecal\subset \Vcal\times \Vcal$ is the edge set.
We denote $n=|\Vcal|$ and $m=|\Ecal|$.
In addition, we denote $\Acal:=\{\alpha_{ij}>0: (i,j)\in \Ecal\}$, where $\alpha_{ij}>0$ is the activation/infection rate of $i$ on $j$.
More precisely, once the node $i$ becomes active at time $t_i$, the time that $i$ needs to infect its healthy neighbor $j$, denoted by $t_{ij}:=t_j-t_i>0$, follows the $\mathrm{Exp}(\alpha_{ij})$ distribution.
Here by $t\sim \mathrm{Exp}(\alpha)$, we mean that the probability density of $t$ is $f(t)=\alpha e^{-\alpha t}$ for $t>0$.
Now suppose $i$ (called the source node) is active at time $0$, then it will start to activate its neighbors in $\Vcal_i^-:=\{j: (i,j)\in \Ecal\}$ simultaneously and independently, each at the corresponding rate given in $\{\alpha_{ij}\in\Acal: j\in\Vcal_i^-\}$.
An activated neighbor node $j$ will then start to activate its healthy neighbors in $\Vcal_j^-$ at the corresponding rates, and so on.
Thus the information originated from the node $i$ can propagate to other nodes on the network.
In a slight generalization to include recovery scenario, an active/infected node $i$ may also recover at some rate $\gamma_i>0$, become inactive/healthy and prune to activation/infection again.

Given the aforementioned propagation model, we are interested in the fundamental problem of influence prediction in this paper.
More specifically, we want to compute the influence, defined as the expected number of active nodes on the network, at any time $t>0$ given that the propagation started from a known set of source nodes.
Moreover, we are also interested in the probability that a specific node $i$ is active/infected at time $t$.
However, these seemingly simple problems turn out to be very challenging computationally:
the heterogenous structure of the given network $\Gcal$ and the variations in activation rates in $\Acal$ (and recovery rates $\mathcal{R}:=\{\gamma_i>0:i\in \Vcal\}$ if applicable) significantly complicate the computation.
For example, an analytic solution of influence prediction is shown to require computation of a Markov chain whose state space is of size $O(2^n)$ \cite{Gomez-Rodriguez:2012c,Van-Mieghem:2009a}, which quickly becomes computationally intractable as $n$ increases.

The influence prediction problem can be significantly more complicated even with slight modification to the basic propagation model above.
For example, if the activation time $t_{ij}$ follows a non-exponential distribution, such as Weibull, Rayleigh, or power-law, the propagation is no longer Markov and analytic solution is not available.
Another example is that the activation processes are not independent: the rate of $j$ being infected is not simply the sum of $\alpha_{ij}$'s over its infected parents in $\Vcal_j^+:=\{i:(i,j)\in\Ecal\}$, but some nonlinear function of them.
In this case, even direct simulation of the propagation becomes computationally prohibitive.

In this paper, we propose a novel approach to address the aforementioned computational problem of influence prediction.
Our approach is based on a reformulation of the information propagation on heterogenous network into a system of jump stochastic differential equations (SDEs).
We also develop an efficient numerical algorithm based on this approach to solve the influence prediction problem.
Our method is instantiated using the basic propagation model with exponentially distributed activation times for demonstration purpose, whereas enhancement to further extend our approach  is also provided.
More importantly, we show that our approach can be applied to a variety of  other critical information propagation problems where some or all existing methods may fail to work.

The main contribution of this paper lies in three phases:
(a) We propose a novel mathematical formulation of information propagation on deterministic heterogenous networks using jump SDEs, where the network structure and activation rates are naturally incorporated as system parameters;
(b) We develop a fast and robust numerical algorithm based on the SDE formulation, and provide a comprehensive analysis of the sampling complexity and prediction error; and
(c) We perform extensive numerical tests using a variety of network structures to demonstrate the efficiency and robustness of the proposed method.

The rest of this paper is organized as follows.
First we review the literature on information propagation on networks that are related to this work
in Section \ref{sec:related}.
In Section \ref{sec:proposed}, we provide the details of the proposed SDE reformulation.
Then we develop a numerical algorithm based on the new formulation to solve influence prediction problem in Section \ref{sec:numerical}.
We present applications of the proposed method to more general propagations in Section \ref{sec:extension}.
We demonstrate the performance of the proposed algorithm, with comparison to several state-of-the-arts methods, on a variety of networks in Section \ref{sec:results}.
Section \ref{sec:conclusion} concludes the paper.


\section{Related Work}
\label{sec:related}

The basic information propagation model with constant activation (infection) rates is equivalent to the classical susceptible-infected (SI) model (or a variation, called susceptible-infected-susceptible, or SIS, model where an infected node can recover and become prune to infection again) which has been extensively studied for \textit{statistically homogenous} networks where individual nodes are indistinguishable in the past decades.
A recent comprehensive survey can be found in \cite{Pastor-Satorras:2015a}.
In contrast, existing work on SI/SIS for fixed and heterogenous networks considered in this paper is very limited,
mainly due to the significant complications and computational challenges due to the prescribed diversities of nodes and their interactions.
In this case, a solution to exactly describe the process requires a state space of
size $O(2^m)$, where $m$ is the size of the network, and hence is computationally intractable in practice
\cite{Gomez-Rodriguez:2012c,Van-Mieghem:2009a}.

One of the major approaches to \textit{approximately} quantify influence or infection probability for the basic propagation model with constant infection/recovery rates is based on mean field theory.
In \cite{Van-Mieghem:2013a,Van-Mieghem:2009a}, mean field approximation is applied to reduce the $O(2^m)$ linear
system describing the Markov SIS process to an $n$-intertwined model, which
is a system of $n$ nonlinear ordinary differential equations.
This method adopts a first-order moment closure that ignores dependencies between infection states of neighbor nodes, and hence its solution gives an upper bound of infection probability for each node. Mean-field approximation is then also applied to more complicated cases including multilayer network \cite{Sahneh:2013a}, weighted network \cite{Yang:2012b}, and hypergraph \cite{Bodo:2016a}.
The mean-field approach is also applied to the competing bi-virus model in \cite{Liu:2016b}.
In \cite{Sahneh:2011a}, an additional ``alert'' state of nodes is considered, such that individuals with infected neighbors
may enter an alert state and become less prune to infection.
In \cite{Cator:2012a}, a second order mean-field approximation is employed
which improves the estimate of epidemic threshold over first order mean-field at the cost of
significantly more computation complexity.
In \cite{Van-Mieghem:2015a}, the authors propose an accuracy criterion of mean-field approximation
using the the covariance between infection states of adjacent nodes.
However, it is computationally intractable to estimate this error due to its exponentially large size.
In \cite{Chow:2018a}, a discrete Fokker-Planck equation based on aggregated activation states is proposed which yields fast and accurate computation of influence without the presence fo recovery. 
Discussions on limitations of mean-field are also provided in \cite{Givan:2011a}.
In \cite{Van-Mieghem:2013b}, the authors showed that the basic propagation model with exponentially-distrubted infection time can be unrealistic in real-world applications and the Markov property that mean-field approaches are based on may not hold.

In recent years, there is a significant increase of interests in information propagation in network and data science due to its prevalent applications in social networking and cyber security. Most literature in this field focus on \textit{discrete-time} information propagations where infections can occur only at discrete time points.
In contrast, continuous-time information propagation studied in the literature closely mimics the SI/SIS model but is also much more challenging computationally as mentioned above. 
In \cite{Gomez-Rodriguez:2012c}, an analytic solution is derived based on the observation that infection time is the length of the stochastic shortest path from the source nodes to the node.
This method establish Markov chains for each node and can estimate individual infection probability, however, the complexity still grows at order $O(2^n)$ for general networks and hence is not scalable.
In \cite{Du:2014a,Du:2013a}, the authors propose a novel sampling technique to estimate the coverage function in information propagation, and developed efficient algorithms to approximate influence even for large networks.
In \cite{Scaman:2015a}, the authors derived bounds of influence and characterized phase transition using the spectral radii of the Laplace Hazard matrix.
Learnability of influence function, which is the core of influence prediction in these methods, is considered in \cite{Narasimhan:2015a}.

\section{SDE Formulation of Information Propagation}
\label{sec:proposed}
In this section, we propose a novel formulation the information propagation on network using jump stochastic differential equations.
This new formulation has an exact and concise mathematical interpretation of the complex random propagation process.
Moreover, we provide an efficient numerical algorithm to solve the influence prediction problem based on this formulation.

For ease of presentation, we first focus on the basic propagation model where the activation times follow independent exponential distributions as described in Section \ref{sec:intro}.
That is, the time $t_{ij}$ for a just-activated node $i$ to activate/infect its inactive/healthy neighbor $j$ follows the $\text{Exp}(\alpha_{ij})$ distribution.
In the presence of recovery, a just-activated node $i$ can also recover and become inactive at rate $\gamma_i$, i.e., the time needed for recovery follows the $\mathrm{Exp}(\gamma_i)$ distribution.
We assume all these activation/infection and recovery times are independent.

To represent the propagation process as a system of jump SDEs, we first denote the stochastic process $X_{i}(t)$ as the time-evolving activation state of the node $i$ at time $t$.
Namely, $X_{i}(t)=1$ if node $i$ is active/infected at time $t$, and $X_{i}(t)=0$ otherwise.
Therefore, each $X_i(t)$ is a right continuous function of time $t$ with left limit.
We denote $X_i(t^-):=\lim_{\tau\to t^-} X_i(\tau)$ the left limit of $X$ at $t$.
For each edge $(i,j)\in \Ecal$ associated with activation rate $\alpha_{ij}$, we introduce an auxiliary temporal point process $N_{ij}(t)\sim \PP(\alpha_{ij})$.
Here by $N(t)\sim \PP(\alpha)$ we meant that $N(t)$ is the time homogeneous Poisson process with intensity $\alpha$, namely, $\lim_{\delta\to0^+}(1/\delta)\cdot\ex[N(t+\delta)-N(t)]=\alpha$ for all $t$.
In other words, $N(t)$ can be thought of as a counting process whose value is $0$ at time $t=0$ and increases (jumps) by 1 at each time $\tau_1,\tau_2,\dots$, where $\{\tau_{k+1}-\tau_k:k\geq0\}$ are i.i.d.\ $\mathrm{Exp}(\alpha)$ random variables ($\tau_0=0$ by convention).
Hence $N(t)=k$ if $\tau_k\le t < \tau_{k+1}$.
In the presence of recovery, we also introduce $R_i(t)\sim \PP(\gamma_i)$ for each $i\in \Vcal$.
Note that $\{N_{ij}(t),R_i(t):i\in\Vcal, (i,j)\in\Ecal\}$ is a finite set of Poisson processes, and hence two or more of them jumping at the same time has probability $0$.

Now the key observation is that we can think $N_{ij}(t)$ as of $i$ ``sending an activation signal'' to $j$ at times $\tau_1,\tau_2,\dots$.
Therefore, $\dif N_{ij}(t)=1$ if $t=\tau_k$ for some $k$ or otherwise $\dif N_{ij}(t)=0$.
We note that a node $j$ becomes activated by $i$ at $t$ successfully if and only if $X_i(t^-)=1$, $X_j(t^-)=0$, and $\dif N_{ij}(t)=1$. By this we have $\dif X_i(t)=1$, and $X_i(t)$ jumps from $0$ to $1$.
Considering that there may be multiple parent nodes in $\Vcal_j^+$ sending activation signals to $j$ simultaneously, we can write the activation process of $j$ as
\begin{equation}\label{eq:jsde_no_recover}
\dif X_j(t) = (1-X_j(t^-))\sum\nolimits_{i\in \Vcal_j^+} X_i(t^-) \dif N_{ij}(t)
\end{equation}
for every $j=1,\dots,n$.
In the presence of recovery, we know an active node $j$ becomes deactivated/recovered if and only if $X_j(t^-)=1$ and $\dif R_j(t)=1$.
Therefore, we can add this recovery term to \ref{eq:jsde_no_recover} and obtain
\begin{equation}\label{eq:jsde_recover}
\dif X_j(t)= \sbr[2]{ (1-X_j(t^-))\sum\nolimits_{i\in \Vcal_j^+}X_i(t^-)\dif N_{ij}(t) } -X_j(t^-)\dif R_j(t)
\end{equation}
for $j=1,\dots,n$.
Compared to \eqref{eq:jsde_no_recover}, the additional term $-X_j(t^-)\dif R_j(t)$ in \eqref{eq:jsde_recover} indicates that $X_j(t)$ can jump from $1$ to $0$ ($\dif X_j(t)=-1$) if $j$ recovers at time $t$.

To rewrite \eqref{eq:jsde_recover} in concise matrix form, we first introduce the following vectors
(unless otherwise noted, all vectors are column vectors):
\begin{align}
\Xbm(t) & = (X_1(t),\dots,X_n(t))^\top \in \Rbb^n \label{eq:X}\\
\Nbm_{\cdot j}(t) & = (N_{k_1j}(t),N_{k_2j}(t),\dots,N_{k_{|\Vcal_j^+|}j}(t) )^\top \in \Rbb^{|\Vcal_j^+|} \label{eq:Nj}\\
\Nbm(t) & = (\Nbm_{\cdot 1}(t)^\top , \dots, \Nbm_{\cdot n}(t)^\top )^\top \in \Rbb^m \label{eq:N}\\
\Rbm(t) & = (R_1(t),\dots,R_n(t))^\top \in \Rbb^n \label{eq:R}\\
\Jbm(t) & = (\Rbm(t)^\top , \Nbm(t)^\top )^\top \in \Rbb^{n+m} \label{eq:Z}
\end{align}
where $k_1<k_2<\dots$ are the parent nodes of $j$ in $\Vcal_j^+$.
Furthermore, $\forall\,\Xbm=(X_1,\dots,X_n)^\top \in \Rbb^n$, we define matrix functions $\cbm_0(\Xbm),\cbm_1(\Xbm),\cbm(\Xbm)$ as
\begin{align}
\cbm_0(\Xbm) & = \diag(X_1,\dots,X_n) \in \Rbb^{n\times n} \label{eq:c0}\\
\cbm_1(\Xbm) & = \diag(\bbm_1(\Xbm)^\top ,\dots,\bbm_n(\Xbm)^{\top}) \in \Rbb^{n\times m} \label{eq:c1}\\
\cbm(\Xbm)\, & = [-\cbm_0(\Xbm), \cbm_1(\Xbm)] \in \Rbb^{n\times (n+m)} \label{eq:c}
\end{align}
where $\bbm_j(\Xbm)=((1-X_j)X_{k_1},\dots,(1-X_j)X_{k_{|\Vcal_j^+|}})^\top \in\Rbb^{|\Vcal_j^+|}$.
Note that $\cbm_0(\Xbm)$ in \eqref{eq:c0} is a diagonal matrix, and $\cbm_1(\Xbm)$ in \eqref{eq:c1} is a block-diagonal matrix with row vectors $\bbm_j(\Xbm)^{\top}$ as the $(j,j)$-block.

Using the vector and matrix notations above, we can rewrite \eqref{eq:jsde_recover} concisely as follows:
\begin{equation}\label{eq:JSDEmtx}
\dif \Xbm(t) = \cbm(\Xbm(t^-)) \dif \Jbm(t) \ .
\end{equation}
The initial $\Xbm(0)=(X_1(0),\dots,X_n(0))^\top $ is determined such that $X_i(0)=1$ if $i$ is a source node and $0$ otherwise.
Note that, without recovery, the system \eqref{eq:JSDEmtx} reduces to
\begin{equation}\label{eq:JSDEmtx_no_recover}
\dif \Xbm(t) = \cbm_1(\Xbm(t^-)) \dif \Nbm(t),
\end{equation}
which is equivalent to \eqref{eq:jsde_no_recover}.

We remark that the system of $n$ coupled jump SDEs, \eqref{eq:JSDEmtx}, or equivalently \eqref{eq:jsde_recover}, represents the basic propagation model exactly.
Since the stochastic process $X_i(t)$ is binary-valued, we know that the probability of a node $i$ being active at time $t$ is $x_i(t):=\ex[X_i(t)$].
Moreover, the influence, defined by the expected number of active nodes at time $t$, is $\mu(t):=\ex[\onebm^\top \Xbm(t)]=\ex[\sum_iX_i(t)]=\sum_i x_i(t)$.
Therefore, the system \eqref{eq:JSDEmtx} and its solution $\Xbm(t)$ play the central role of our algorithmic development for influence prediction below.

\section{Algorithm and Complexity}
\label{sec:numerical}
As showed above, the basic information propagation process can be formulated as the system of jump SDEs \eqref{eq:jsde_recover}.
To obtain individual activation probability $x_i(t)$ and influence $\mu(t)$, we need to solve the SDE system \eqref{eq:jsde_recover} numerically to estimate $\ex[X_i(t)]$ and $\ex[\onebm^\top \Xbm(t)]$, respectively.
In the literature, numerical approximations to the solution of an SDE can be generally categorized in two types: strong approximation and weak approximation.
Strong approximation is used to estimate the solution $\Xbm(t)$ pathwisely.
Weak approximation, on the other hand, is used to compute the expectations of (functions of) $\Xbm(t)$, such as $\ex[X_i(t)]$ and $\ex[\onebm^\top \Xbm(t)]$.
Therefore, we use weak approximations in this paper as it suffices for our influence prediction problems.

In what follows, we first introduce the (weak) Euler and Taylor schemes with standard time discretization for solving \eqref{eq:JSDEmtx} in Section \ref{subsec:euler}.
Then we employ a variance reduction technique and present our algorithm in Section \ref{subsec:alg}.
In Section \ref{subsec:complexity}, we establish the relation between approximation error and sampling complexity of the proposed algorithm.

\subsection{Euler and Taylor schemes}
\label{subsec:euler}
We consider the solution of the system \eqref{eq:JSDEmtx} over time interval $[0,T]$ for some prescribed time horizon $T$.
For ease of presentation, we partition $[0,T]$ into $K(h):=T/h$ equal segments using discretization points $t_k=kh$ for $k=0,1,\dots,K(h)$, where without loss of generality we assume the step size $h\in(0,1)$.
Now we want to compute $\Xbmh(t)$ which approximate $\Xbm(t)$ at all $t_k$ for $k=0,1,\dots,K(h)$.
For notation simplicity, we often denote $\Xbmh_k:=\Xbmh(t_k)$ in the derivation below.

\begin{definition}[Weak order]
An approximation $\Xbmh$ is said to converge to $\Xbm$ at weak order $\beta>0$ if for any $g\in C_P^{2\beta+2}(\mathbb{R}^{n+m};\mathbb{R})$, there exists a constant $C_{g,T}>0$ independent of $h$ such that
\begin{equation}\label{eq:weakorder}
|\ex[g(\Xbmh(T))]-\ex[g(\Xbm(T))]| \le C_{g,T} h^{\beta}.
\end{equation}
for all $h>0$ sufficiently small. 
Here $C_P^{\beta}(\mathbb{R}^{n};\mathbb{R})$ denotes the space of $\beta$ times continuously differentiable functions $\Rbb^n \to \Rbb$ with partial derivatives with polynomial growth, 
\end{definition}
Note that the weak order $\beta$ of convergence for a discretization scheme has important practical consequences for simulation efficiency.

To generate $\Xbmh$ that approximates the solution $\Xbm$ in \eqref{eq:JSDEmtx}, a basic algorithm is the Euler scheme:
\begin{equation}\label{eq:euler}
\Xbmh_{k+1} = \Xbmh_k +  \cbm(t_k, \Xbmh_k) \Delta \Jbm_k,
\end{equation}
where we also included $t$ in function $\cbm$ so that the method can generalize to cases where $\cbm$ is time-dependent. 
In \eqref{eq:euler}, the increment $\Delta \Jbm_k=\Jbm(t_{k+1})-\Jbm(t_{k})$ is a vector of independent Poisson
random numbers.
More precisely, $\Delta \Jbm_k=(\Delta \Rbm_k^\top , \Delta \Nbm_k^\top)^\top\in\Rbb^{n+m}$ where the components of $\Delta\Rbm_k$ and $\Delta\Nbm_k$ are generated from Poisson distributions with $\{\gamma_ih\}$ and $\{\alpha_{ij}h\}$ as parameters, respectively.
The Euler scheme \eqref{eq:euler} is known to be convergent at weak order $\beta=1$ \cite{Platen:2010a}.

By adding more terms in the Wagner-Platen expansion \cite{Platen:2010a}, we can obtain numerical schemes with a higher order of convergence.
For example, the Taylor scheme with convergence of weak order $\beta=2$ is given by
\begin{equation}\label{eq:taylor1}
\left\{
\begin{split}
\Xbmh_{k+.5}&= \Xbmh_k +  \cbm_k \Delta \Jbm_k \\
\Xbmh_{k+1}&= \Xbmh_k + \cbm_k \Delta \Jbm_k + (1/2) \cdot (\cbm_{k+.5}-\cbm_k) \Jbm_k (\Jbm_k - 1)
\end{split}
\right.
\end{equation}
where $\cbm_k:=\cbm(t_k, \Xbmh_k)$, $\cbm_{k+.5} := \cbm(t_k, \Xbmh_{k+.5})$, and the subtraction and multiplication in $\Jbm_k (\Jbm_k - 1)$ are executed componentwisely to obtain a vector in $\Rbb^{m+n}$.

Higher-order Taylor schemes can also be obtained by adding more Wagner-Platen terms to \eqref{eq:taylor1}.
We refer interested readers to \cite{Platen:2010a}.
In this paper, we only use the Euler scheme \eqref{eq:euler} with $\beta=1$, and sometimes the Taylor scheme \eqref{eq:taylor1} with $\beta=2$, since they are accurate and cost-effective for our influence prediction problem.

It is also worth noting that, for mark-independent pure jump SDEs such as \eqref{eq:JSDEmtx}, jump-adapted time discretization can directly simulate jump times for non-uniform time discretization.
More precisely, in jump-adapted discretization, we first simulate a trajectory $0=\tau_0<\tau_1<\dots\leq T$ of the Poisson process $\Jbm(t)$, and apply it to the SDE \eqref{eq:JSDEmtx} so that the next sample $\Xbmhat(\tau_{k+1})$ can be computed directly given $\Xbmhat(\tau_k)$.
For $t\in[\tau_k, \tau_{k+1})$ the approximation $\Xbmhat(t)$ remains as constant $\Xbmhat(\tau_k)$.
One advantage of such jump-adapted Euler scheme produces is that it does not generate discretization error, and it is particularly efficient for low intensity jump processes. In this work, we will incorporate the jump-adapted discretization into the regular uniform discretization of the Euler and Taylor schemes in our algorithm.

\subsection{Variance reduction}
\label{subsec:alg}
The numerical solution $\Xbmh$ obtained by the Euler \eqref{eq:euler} or Taylor \eqref{eq:taylor1} schemes is one sample approximation of the stochastic process $\Xbm$ defined in \eqref{eq:JSDEmtx}.
To estimate $\ex[g(\Xbmh(T))]$ in the applications of influence prediction and individual activation probability, we can employ \eqref{eq:euler} or \eqref{eq:taylor1} for $L$ times to obtain $\{\Xbmhl:1\le l \le L\}$, and use their sample mean to approximate $\ex[g(\Xbmh(T))]$.
More specifically, for the prescribed time horizon $T$, the sample mean $u_T(h,L)$ is defined by
\begin{equation}\label{eq:avgest}
u_T(h,L)=(1/L)\cdot \sum\nolimits_{l=1}^{L} g(\Xbmhl(T))
\end{equation}
which depends on both of the discretization step size $h$ and the number of sample trajectories $L$.
This sample mean $u_T(h,L)$ is our approximation to $\ex[g(\Xbmh(T)]$.

As we will show later, the mean square error of the approximation $u_T(h,L)$ depends on the its variance $(1/L)\cdot \mathrm{var}(g(\Xbmhl(T)))$.
To reduce this approximation error in practice, we employ a variance reduction technique introduced in \cite{Maginnis:2014a} in our sampling algorithm.
More specifically, in each sampled propagation (also called sampling trajectory), we generate a pair of antithetical samples $Z^+$ and $Z^-$ from each Poisson random variable of mean $\gamma_iT$ and $\alpha_{ij}T$ for $i\in \Vcal$ and $(i,j)\in \Ecal$, respectively. 
Then we sample $Z^+$ and $Z^-$ points independently and uniformly on $[0,T]$ respectively.
Thus each set of points forms a trajectory of the Poisson process, which can be used in the Euler scheme \eqref{eq:euler} or the Taylor scheme \eqref{eq:taylor1} to obtain a sample of solution $\Xbmh_K=\Xbmh(t_K)=\Xbmh(T)$ to \eqref{eq:JSDEmtx}.
The antithetical property of $Z^\pm$ reduces the variance of $\Xbmh(t)$ in practice.
The algorithm is summarized in Algorithm \ref{alg:jsde}.
For ease of presentation, we assume that the sample size $L$ is even, and the step size $h$ is chosen such that $T/h$ is an integer.


\begin{algorithm}
\caption{Sample approximation $u_{T}(h,L)$ of influence $\influgT=\ex[g(\Xbm(T))]$}
\label{alg:jsde}
\begin{algorithmic}
\STATE \textbf{Input:} $\Gcal=(\Vcal,\Ecal)$, $\{\alpha_{ij},\gamma_i:i\in\Vcal, (i,j)\in\Ecal\}$, and $T$. Set $h$, $L$.
\STATE Set $K=L/h$ and $\tbm_k:= [kh,(k+1)h)$ for $k = 0, \dots, K-1$. Set $s=0$.
\FOR{$l=1,2,\dots,L/2$}
\FOR{each $i\in \Vcal$}
\STATE {Set $F$ to the cumulative distribution of $\text{Poisson}(\gamma_iT)$;}
\STATE {Draw $U\sim\text{Uniform}(0,1)$, and set $Z^{+}=F^{-1}(1-U)$ and $Z^{-}=F^{-1}(U)$;}
\STATE {Sample $Z^{+}$ and $Z^{-}$ points, $\taubm_{\gamma_i}^l:=\{\tau_z^l:1\le z\le Z^+\}$ and $\taubm_{\gamma_i}^{(L/2)+l}:=\{\tau_z^{L/2+l}:1\le z \le Z^-\}$ respectively, each on $[0,T]$ uniformly;}
\STATE {Set $(\Delta\Rbm_k^\ell)_i=|\taubm_{\gamma_i}^\ell\cap \tbm_k|$ for $k=0,\dots, K-1$ and $\ell=l,(L/2)+l$;}
\ENDFOR
\FOR{each $(i,j)\in \Ecal$}
\STATE {Set $F$ to the cumulative distribution of $\text{Poisson}(\alpha_{ij}T)$;}
\STATE {Draw $U\sim\text{Uniform}(0,1)$, and set $Z^{+}=F^{-1}(1-U)$ and $Z^{-}=F^{-1}(U)$;}
\STATE {Sample $Z^{+}$ and $Z^{-}$ points, $\taubm_{\alpha_{ij}}^l:=\{\tau_z^l:1\le z\le Z^+\}$ and $\taubm_{\alpha_{ij}}^{(L/2)+l}:=\{\tau_z^{L/2+l}:1\le z \le Z^-\}$ respectively, each on $[0,T]$ uniformly;}
\STATE {Set $(\Delta\Nbm_k^\ell)_{ij}=|\taubm_{\alpha_{ij}}^\ell\cap \tbm_k|$ for $k=0,\dots, K-1$ and $\ell=l,(L/2)+l$;}
\ENDFOR
\STATE Solve \eqref{eq:euler} or \eqref{eq:taylor1} for $\Xbm_{K}^{h,\ell}$ using $\{\Delta\Rbm_k^\ell,\Delta\Nbm_k^\ell:k\}$ for $\ell=l,(L/2)+l$;
\STATE $s \leftarrow s + g(\Xbm_K^{h,l}) + g(\Xbm_K^{h,(L/2)+l}) $;
\ENDFOR
\STATE \textbf{Output:} $u_{T}(h,L)=s/L$.
\end{algorithmic}
\end{algorithm}

\subsection{Sample complexity analysis}
\label{subsec:complexity}
To establish the relation of approximation accuracy and cost, we provide a comprehensive analysis of computation and sampling complexity for our SDE-based influence prediction method.
In particular, for any fixed time horizon $T>0$ and influence evaluation function $g$, we will derive upper bound of the root mean squared error (RMSE), denoted by $e_T(h,L)$, of the sample approximation $u_T(h,L)$ to $\influgT:=\ex[g(\Xbm(T))]$. 
%
The RMSE $e_T(h,T)$ is defined by
\begin{align}\label{eq:rmse}
e_T(h,L) = \cbr[2]{\ex \sbr[2]{|u_T(h,L)-\influgT|^2} }^{1/2}.
\end{align}
Without loss of generalization, we again assume that $h\in(0,1)$ and $T/h$ is an integer.
In addition, our complexity analysis requires the following conditions.
\begin{assumption}\label{as:bound}
The stochastic process $\Xbm(t)$ and the influence evaluation function $g$ satisfy:
\begin{enumerate}
\item The function $g$ satisfies polynomial growth. Namely, $\exists$ $C>0$ and positive integer $s>0$ such that $g(\xbm)\le C(1+\|\xbm\|^s)$ for all $\xbm\in\Rbb^n$.
\item The influence $g(\Xbm(t))$ has bounded second moment, i.e., $\ex[g^2(\Xbm(T))] < \infty$.
\end{enumerate}
\end{assumption}
%
%

Now we are ready to present the first result that links the RMSE to the step size $h$ and the number of sampling trajectories $L$ in the approximation $u_T(h,L)$.
\begin{theorem}\label{thm:e_bound}
Let $u_T(h,L)$ be the sample approximation to $\influgT$ generated by Algorithm \ref{alg:jsde} with a numerical SDE scheme \eqref{eq:euler} or \eqref{eq:taylor1} of weak order $\beta>0$.
Suppose Assumption \ref{as:bound} holds for $g$ and $\Xbm(t)$.
Then there exists a constant $\sgT$ dependent on $g$ and $T$ but not on $h$ and $\beta$, such that
\begin{equation}\label{eq:e_bound}
e_T(h,L) \le \del[2]{ \frac{\sgT}{L} + \CgT^2 h^{2\beta} }^{1/2}.
\end{equation}
\end{theorem}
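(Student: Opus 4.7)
My plan is to bound $e_T(h,L)$ by a standard bias--variance decomposition, estimating each piece using the weak--order property from the definition and the moment assumption in Assumption~\ref{as:bound}.

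First I would write $\influgT = \ex[g(\Xbm(T))]$ and $\bar{u}_T(h) := \ex[g(\Xbmh(T))]$. Because the antithetic pair $(\Xbm^{h,l}(T),\Xbm^{h,(L/2)+l}(T))$ is constructed so that each marginal trajectory is a valid realization of the Euler or Taylor scheme, both samples have the same distribution as a single scheme output, and hence $\ex[u_T(h,L)] = \bar{u}_T(h)$. I would then split
\begin{equation*}
e_T(h,L)^2 = \var(u_T(h,L)) + (\bar{u}_T(h) - \influgT)^2.
\end{equation*}

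For the bias term, the weak--order definition applied to the scheme immediately gives $|\bar{u}_T(h)-\influgT| \le \CgT h^\beta$ for all sufficiently small $h$, which is exactly the $\CgT^2 h^{2\beta}$ contribution on the right of \eqref{eq:e_bound}.

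For the variance term, I would exploit that the $L/2$ antithetic pairs are drawn independently across $l$. Writing $S_l := g(\Xbm^{h,l}(T)) + g(\Xbm^{h,(L/2)+l}(T))$, independence across $l$ gives $\var(u_T(h,L)) = (L/2)\var(S_1)/L^2 = \var(S_1)/(2L)$. Using Cauchy--Schwarz, $\var(S_1)\le 4\var(g(\Xbm^{h,1}(T)))$, so it suffices to bound $\var(g(\Xbmh(T)))$ uniformly in $h$ by a constant $\sgT/2$ (after absorbing the factor $2$). The polynomial growth of $g$ from Assumption~\ref{as:bound}(1) makes $g^2$ also of polynomial growth, so applying the weak--order property to the test function $g^2$ yields
\begin{equation*}
\ex[g^2(\Xbmh(T))] \le \ex[g^2(\Xbm(T))] + C_{g^2,T}\, h^\beta,
\end{equation*}
which is finite by Assumption~\ref{as:bound}(2) and uniformly bounded for $h\in(0,1)$. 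This provides the constant $\sgT$ depending only on $g$ and $T$. Combining the two bounds via $\sqrt{a+b}\le \sqrt{a}+\sqrt{b}$ or keeping them squared gives precisely \eqref{eq:e_bound}.

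The main technical subtlety I expect is the uniform--in--$h$ control of the second moment of $g(\Xbmh(T))$: the numerical iterates do not obviously remain in $\{0,1\}^n$, so one cannot simply use the boundedness of $\Xbm$. Applying the weak--convergence property to $g^2$ is the cleanest way around this, provided $g^2$ is covered by the polynomial--growth class $C_P^{2\beta+2}$ assumed in the weak--order definition; if it is not, one instead bounds $\ex[\|\Xbmh(T)\|^{2s}]$ directly through a standard induction on the Euler recursion using the boundedness of the Poisson increments' moments, and then invokes the polynomial growth of $g$ to conclude.
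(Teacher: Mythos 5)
Your proposal is correct and follows essentially the same route as the paper: a bias--variance decomposition of $e_T(h,L)^2$, the weak-order bound \eqref{eq:weakorder} for the bias term, and a uniform-in-$h$ bound on $\mathrm{var}(g(\Xbmh(T)))$ obtained by applying weak convergence to the test function $g^2$ (the paper invokes Theorem 12.3.4 of Platen--Bruti-Liberati for exactly this step). The only difference is that you account for the within-pair correlation of the antithetic samples explicitly via Cauchy--Schwarz, absorbing a factor of $2$ into $\sgT$, whereas the paper writes the variance of the sample mean as $(1/L)\,\mathrm{var}(g(\Xbmh(T)))$ as if the $L$ trajectories were independent; your treatment is the more careful one and still yields a bound of the required form \eqref{eq:e_bound}.
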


\begin{proof}
As $g$ has polynomial growth, so does $g^2$.
Therefore we know there exists $C>0$, independent of $\beta$, such that $\mathrm{var}(g(\Xbmh(T)))$, the variance of $g(\Xbmh(T))$, has the following bound:
\begin{equation}\label{eq:var_bound}
\mathrm{var}(g(\Xbmh(T))) \le \ex[g^2(\Xbmh(T))] \le \ex[g^2(\Xbm(T)]+ Ch^\beta \le \ex[g^2(\Xbm(T))] + C =:\sgT,
\end{equation}
where the second inequality is due to $|\ex[g^2(\Xbmh(T))] - \ex[g^2(\Xbm(T))]| \le C h^{\beta}$ for some $C$ independent of $h$ by Theorem 12.3.4 in \cite{Platen:2010a}, and the last inequality is due to $h\in (0,1)$ and $\beta>0$.
Furthermore, the RMSE $e_T(h,L)$ of $u_T(h,L)$ defined in \eqref{eq:rmse} satisfies
\begin{align}
e_T^2(h,L)
& =\ \ex[|u_T(h,L)- \influgT|^2] \nonumber\\
& =\ \ex[|u_T(h,L)-\ex[g(\Xbmh(T))]+\ex[g(\Xbmh(T))]-\influgT|^2] \nonumber\\
& =\ \ex[|u_T(h,L)-\ex[g(\Xbmh(T))]|^2] + |\ex[g(\Xbmh(T))]-\influgT|^2 \nonumber \\
& =\ \ex[|u_T(h,L)-\ex[g(\Xbmh(T))]|^2] + C_{g,T}^2 h^{2\beta} \nonumber \\
& =\ (1/L)\cdot \mathrm{var}(g(\Xbmh(T))) + C_{g,T}^2 h^{2\beta} \nonumber \\
& \le\ (1/L)\cdot \sgT + C_{g,T}^2 h^{2\beta}, \nonumber
\end{align}
where we used the fact $\ex[u_{h,L}(T)]=\ex[g(\Xbmh(T))]$ in the third equality, \eqref{eq:weakorder} in the fourth equality, and the fact that every $\Xbmhl$ has the same distribution as $\Xbmh$ in the fifth equality, and \eqref{eq:var_bound} in the last inequality.
Taking square root on both sides yields \eqref{eq:e_bound}.
\end{proof}

\begin{corollary}
Suppose the conditions in Theorem \ref{thm:e_bound} hold.
For any $\epsilon>0$, the RMSE satisfies $e_T(h,L)\le \epsilon$ if the step size $h$ and the number of trajectories $L$ are set to
\begin{equation}\label{eq:opt_hL}
h=T \dgT \DgT^{-1/\beta} \epsilon^{1/\beta}\quad \mbox{and} \quad L=\dgT \DgT^2 \epsilon^{-2},
\end{equation}
where the constants $\dgT$ and $\DgT$ only depend on $g,T,\beta$ as follows,
\begin{equation}\label{eq:def_d}
\dgT = \del[2]{\frac{\sgT}{2\beta \CgT T^{2\beta}}}^{1/(2\beta)}\quad \mbox{and} \quad \DgT = \del[2]{\frac{\sgT}{\dgT} + \CgT T^{2\beta}\dgT^{2\beta}}^{1/2}.
\end{equation}
In particular, there is $e_T(h,L)\le \epsilon$ if the total sampling complexity is
\begin{equation}\label{eq:samp_cmp}
\del[2]{\sum\nolimits_{i,j}\alpha_{ij} + \sum\nolimits_{i}\gamma_i}T\dgT\DgT^2\epsilon^{-2}=O(\epsilon^{-2})
\end{equation}
on expectation and the computation complexity is 
\begin{equation}\label{eq:comp_cmp}
O((m+n)(\epsilon/\DgT)^{-(2\beta+1)/\beta})=O(\epsilon^{-(2\beta+1)/\beta}).
\end{equation}
\end{corollary}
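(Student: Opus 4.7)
The plan is to substitute the prescribed $h$ and $L$ from \eqref{eq:opt_hL} directly into the RMSE bound of Theorem~\ref{thm:e_bound} and verify that the two sources of error combine to exactly $\epsilon^2$. Writing $h^{2\beta} = T^{2\beta}\dgT^{2\beta}\DgT^{-2}\epsilon^2$ and $1/L = \DgT^{-2}\dgT^{-1}\epsilon^2$, the bound becomes
\[
e_T(h,L)^2 \le \frac{\epsilon^2}{\DgT^2}\left(\frac{\sgT}{\dgT} + \CgT^2 T^{2\beta}\dgT^{2\beta}\right),
\]
and the parenthesized expression equals $\DgT^2$ by \eqref{eq:def_d}, so $e_T(h,L) \le \epsilon$. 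The particular shape of $\dgT$ in \eqref{eq:def_d} is chosen so that the derivative of the total work $L/h$ with respect to the split between the variance term $\sgT/L$ and the bias term $\CgT^2 h^{2\beta}$ vanishes at the constraint $e_T(h,L)^2=\epsilon^2$; this one-variable Lagrange-multiplier argument I would present only as motivation rather than as a logical step in the proof itself.

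For the sampling complexity, I would directly count the random draws made inside the outer \textbf{for} loop of Algorithm~\ref{alg:jsde}. Each antithetical pair of trajectories generates, for each edge $(i,j)$, a total of $Z^{+}+Z^{-}$ uniform points on $[0,T]$, and $\ex[Z^{+}+Z^{-}]=2\alpha_{ij}T$ since both $Z^{\pm}$ are marginally Poisson with mean $\alpha_{ij}T$; an analogous accounting applies to the recovery processes. Summing over all edges and nodes and over the $L/2$ antithetical pairs, the expected number of jump times drawn is $(\sum\nolimits_{i,j}\alpha_{ij}+\sum\nolimits_{i}\gamma_i)\,T\,L$, and substituting $L=\dgT \DgT^2\epsilon^{-2}$ gives \eqref{eq:samp_cmp}.

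For the computation complexity, I would observe that a single Euler step \eqref{eq:euler} (or Taylor step \eqref{eq:taylor1}) costs $O(m+n)$ arithmetic operations: by the block-diagonal construction \eqref{eq:c0}--\eqref{eq:c}, $\cbm(\Xbm)$ has $O(m+n)$ nonzero entries, so $\cbm(\Xbm)\Delta\Jbm$ is a sparse matrix-vector product. With $T/h$ steps per trajectory and $L$ trajectories, the total cost is $O((m+n)TL/h)$; plugging in \eqref{eq:opt_hL} makes the factors of $T$ and $\dgT$ cancel cleanly, leaving $O((m+n)\DgT^{(2\beta+1)/\beta}\epsilon^{-(2\beta+1)/\beta})$, which is \eqref{eq:comp_cmp}.

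No genuine difficulty is expected: the corollary is almost entirely a bookkeeping consequence of Theorem~\ref{thm:e_bound} together with the structure of Algorithm~\ref{alg:jsde}. The only minor care I would exercise is the consistency between the powers of $\CgT$ appearing in \eqref{eq:e_bound} and \eqref{eq:def_d}; matching them (treating both occurrences in $\DgT$ as $\CgT^2$) is what makes the algebra of the first paragraph close up exactly, and I take this as a notational point rather than a mathematical obstruction.
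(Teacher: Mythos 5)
Your proposal is correct and, in substance, matches the paper's argument: both reduce the corollary to the bound of Theorem \ref{thm:e_bound} plus a count of the sampling and per-step costs of Algorithm \ref{alg:jsde}, and your accounting of the expected number of jump times, $(\sum_{i,j}\alpha_{ij}+\sum_i\gamma_i)TL$ (via the fact that both antithetical variables $Z^{\pm}$ are marginally Poisson), and of the $O((m+n)TL/h)$ arithmetic cost is exactly the paper's. The one place you genuinely diverge is the first claim: the paper \emph{derives} \eqref{eq:opt_hL} by minimizing $\sgT/L+\CgT^2h^{2\beta}$ subject to a fixed budget $B=LT/h$, obtaining $h=T\dgT B^{-1/(2\beta+1)}$ and $L=\dgT B^{2\beta/(2\beta+1)}$, concluding $e_T(h,L)\le \DgT B^{-\beta/(2\beta+1)}$, and then setting $B=(\epsilon/\DgT)^{-(2\beta+1)/\beta}$; you instead substitute \eqref{eq:opt_hL} directly into \eqref{eq:e_bound} and verify the algebra closes to $\epsilon^2$, relegating the optimization to motivation. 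Your route is shorter and, importantly, more robust: it yields $e_T(h,L)\le\epsilon$ for \emph{any} positive value of $\dgT$ once $\DgT$ is defined consistently with the bound, so it does not depend on the precise form of $\dgT$ in \eqref{eq:def_d} --- which is fortunate, since the stated exponent $1/(2\beta)$ does not match what the budget-constrained minimization actually produces (the stationarity condition gives $h^{2\beta+1}=\sgT T/(2\beta\CgT^2 B)$, i.e.\ an exponent $1/(2\beta+1)$), whereas the paper's derivation leans on that computation. The $\CgT$-versus-$\CgT^2$ mismatch you flag between \eqref{eq:e_bound} and \eqref{eq:def_d} is indeed present in the paper (its own proof minimizes an objective containing $C_{g,T}^2h^{2\beta}$ yet defines $\DgT$ with $\CgT$ to the first power) and is harmless once the two occurrences are made consistent, exactly as you treat it.
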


\begin{proof}
As the total computation complexity is linear in $T/h$ and $L$, we denote the cost $B=LT/h$.
For any $B$, the minimum of the bound of $e_T(h,L)$ in \eqref{eq:e_bound} can be obtained by solving $\min_{h,L} \{(\sgT/L) + C_{g,T}^2 h^{2\beta} \}$ subject to $B=LT/h$, which yields solution
\begin{equation}\label{eq:hL}
h = T \dgT B^{-1/(2\beta+1)}\quad \mbox{and}\quad L = \dgT B^{2\beta/(2\beta+1)},
\end{equation}
where $\dgT$ is defined in \eqref{eq:def_d}.
In this case, the bound given in \eqref{eq:e_bound} can be written as $e_T(h,L) \le \DgT B^{-\beta/(2\beta+1)}$.
In order to have $e_T(h,L) \le \epsilon$, it suffices to have $B=(\epsilon/\DgT)^{-(2\beta+1)/\beta}$, which together with \eqref{eq:hL} yields \eqref{eq:opt_hL}.

In Algorithm \ref{alg:jsde}, each trajectory of \eqref{eq:euler} or \eqref{eq:taylor1} needs to have $(\sum_{i,j}\alpha_{ij}+\sum_i \gamma_i)T$ sampled points on expectation. Hence the total sampling complexity of $L$ trajectories is $(\sum_{i,j}\alpha_{ij}+\sum_i \gamma_i)TL$, which together with $L$ \eqref{eq:opt_hL} yields \eqref{eq:samp_cmp}.
In addition, each step of \eqref{eq:euler} or \eqref{eq:taylor1} is $O(m+n)$, and hence the total computational complexity is $O((m+n)LT/h)$. Substituting $h$ and $L$ by the values in \eqref{eq:opt_hL} yields \eqref{eq:comp_cmp}. This completes the proof.
\end{proof}


\section{SDE for more General Propagation Problems}
\label{sec:extension}
The basic propagation model with constant activation (and recovery) rates discussed above is widely used in a variety of applications including news and disease spread etc.
However, in many real-world applications, the propagations are often time and/or state dependent and the basic model with constant activations rates is not accurate.
In these cases, the vast existing methods relying on the constant rates in basic propagation model are not suitable.
On the other hand, our method based on jump SDE can be readily modified handle these cases by making the coefficients and jump intensity time and state dependent.
In this section, we depict such generalization of our method to two scenarios.
For conciseness of the present paper, we will report more in-depth analysis and numerical experiments of these cases in a forthcoming work.

\subsection{Time varying activation rates}
In some real-world applications, the impact of $i$ on $j$ may be diminishing along time, which mimics the phenomenon that older news/message makes less impact to a user's action. 
In this case, the activation rate $\alpha_{ij}(t)$ is time varying, for example, can be modeled as decaying such that $t_{ij}=t_j-t_i\sim \text{Weibull}(\alpha_{ij},\beta_{ij})$ for some $\beta_{ij}>0$ instead of $\text{Exp}(\alpha_{ij})$.
Here by $t\sim\text{Weibull}(\alpha,\beta)$ we mean that the probability density of $t$ is $f(t)=\beta\alpha^{\beta}t^{\beta-1}e^{-(\alpha t)^\beta}$ for $t>0$.
This yields a time varying activation rate $\alpha_{ij}(t)=\beta_{ij} \alpha_{ij}^{\beta_{ij}}t^{\beta_{ij}-1}$ of $i$ on $j$ where $t$ is the time since $i$ got activated.
Note that with $\beta_{ij}=1$ there is $t_{ij}\sim\text{Exp}(\alpha_{ij})$ and $\alpha_{ij}(t)\equiv \alpha_{ij}$, which reduces to the basic propagation model.
Similar modifications can be made to the recovery rate so that $\gamma_i(t)$ is time varying.

Time varying activation rates cause significant computational challenge for existing methods, since the propagation is no longer Markov.
Moreover, Monte Carlo simulation becomes difficult and computationally demanding due to the dependency of propagation on its entire history.
However, our approach can easily address this issue by incorporating additional variable into the SDE system.
More specifically, we introduce an auxiliary variable $U_j(t)$ for every node $j$, and establish a system of SDEs of $\{U_j(t),X_j(t):j\in\Vcal,t\in[0,T]\}$.
For ease of presentation, we consider time-varying activation rates but still assume constant recovery rates, since the further generalization is trivial.

We first observe that the key to incorporating the time dependency of activation rates is to record the time elapsed since the last activation of every node $j$.
This time is denoted by $U_j(t)$, and the coupled SDE system of $U_j(t)$ and $X_j(t)$ is give below:
\begin{equation}\label{eq:nonexpJSDE}
\left\{
\begin{split}
\dif X_j(t) & = \sbr[2]{ (1-X_j(t^-))\sum\nolimits_{i\in \Vcal_j^+}X_i(t^-)\dif N_{ij}(t,U_i(t^-)) } -X_j(t^-)\dif R_j(t) \\
\dif U_j(t) & = X_j(t^-) \dif t - U_j(t^-)\dif R_j(t)
\end{split}
\right.
\end{equation}
where $U_j(t)$ is the time since last activation of $j$, and $U_j(t)=0$ if $j$ is currently inactive at time $t$.
To see this, we first observe that $X_j(t)$ is binary-valued at 0 and 1 to indicate the activation state of $j$ at time $t$.
Therefore, the instantaneous rate of $U_j(t)$ is given by $X_j(t^-)$ and hence $U_j(t)$ accumulates at rate $1$ when $j$ is active, whereas the value of $U_j(t)$ drops to $0$ every time $j$ recovers, i.e., $X_j(t^-)\dif R_j(t)=1$.
In \eqref{eq:nonexpJSDE}, the intensity of the Poisson process $N_{ij}(t,U_i(t^-))$ depends on $U_i$, the time since the last activation of the parent node $i$.
For example, if the activation time follows the Weibull distribution with $\alpha_{ij}$ and $\beta_{ij}$ as parameters, then we can obtain the intensity of $N_{ij}(t,U_i(t))$ as $\beta_{ij} \alpha_{ij}^{\beta_{ij}}(U_i(t))^{\beta_{ij}-1}$.
Derivations for other types of distributions are similar.

\subsection{State-dependent propagation}
In some applications, the rate $\alpha_j(t)$ for a node $j$ to get activated at time $t$ is not simply the sum of $\alpha_{ij}$ over its active parents. 
Instead, $\alpha_j$ can be a nonlinear function of these $\alpha_{ij}$.
For example, the rate for $j$ to get activated is $\alpha_j(t) = \min\{a_j, \sum_{i}\alpha_{ij}X_{i}(t^-)\}$ as a nonlinear function of activation states of its parents.
Namely, $a_j>0$ is a personal threshold such that the rate $\alpha_j$ is throttled and more active parents will not further increase the rate $\alpha_j$.
In this case, we can write the processes $N_{ij}(t)$ to be state dependent, i.e., $N_{ij}(t)=N_{ij}(t,\Xbm(t^-))$, with intensity $\min\{a_j,\sum_{i}\alpha_{ij}X_{i}(t^-)\}$. 
The SDE solver can be employed in the same way.

\section{Numerical Experiments}
\label{sec:results}
To demonstrate the effectiveness of the proposed method, we conduct extensive numerical experiments of influence prediction using various networks. 
More specifically, we test the proposed method on influence predictions problems on artificially generated networks for which ground truth influence can be obtained by large number of naive Monte Carlo simulations.
For comparison purpose, we also implemented two state-of-the-art influence estimation methods and plot their results to show the improved efficiency and flexibility of the proposed method.

\subsection{Experiment setup}
\label{subsec:setup}
To generate networks in our experiments, we used the CONTEST package \cite{Taylor:2009a} which is freely available to public online.
The CONTEST package can generate many types of networks given network size and specific parameters.
We conducted our numerical tests by using different types of networks, and found that the influence prediction results are very similar.
Therefore, for sake of conciseness, we only show the results using three types of networks: Erd\H{o}s-R\'{e}nyi network, small-world network, and scale-free (preferential attachment) network in this paper, as they are typical networks and are widely used in real-world applications.
Unless otherwise noted, the source set of nodes are randomly selected, and fix it for all comparison methods in each test.

The ground truth influence of each test is obtained by simulating a large number of propagations using Monte Carlo (MC) method and taking the sample mean.
More specifically, we generate $10,000$ propagations for a given network and source set selection, and obtain the empirical probability that node $i$ is activate at time $t$, denoted $x_i(t)$. 
The total influence is obtained by $\influ(t)=\sum_i x_i(t)$.
Therefore, $\influ(t)$ and $x_i(t)$ correspond to influence evaluation functions $g(\xbm)=1^\top \xbm$ and $g_i(\xbm)=x_i$ respectively.
In other words, $\ex[g(\Xbm(t))]=\sum_{i=1}^n \ex[X_i(t)]$ is the expected number of active nodes at time $t$, and $\ex[g_i(\Xbm(t))]=\ex[X_i(t)]$ is the probability that node $i$ is active at time $t$.
The estimated values are denoted by $\influhat(t)$ and $\xhat(t)$ respectively.

%
%
%
%

For comparison purpose, we also implemented two state-of-the-art methods to estimate influence on heterogenous networks: the mean-field approximation (labeled as \textbf{Mean Field}) with the first-order moment closure \cite{Van-Mieghem:2009a} and a sampling-based approximation method for continuous-time influence estimation (labeled as \textbf{ConTinEst}) \cite{Du:2013a}.
Mean Field approximates $x_i(t)$ by solving the following deterministic system of $n$ coupled nonlinear differential equations numerically using e.g., 4th order Runge-Kutta method:
\begin{equation}\label{eq:mfa}
x_j'(t)= \sbr[2]{(1-x_j(t))\sum\nolimits_{i\in \Vcal_j^+}\alpha_{ij}x_i(t) }-\gamma_j x_j(t),\quad j=1,\dots,n.
\end{equation}
The computational cost of Mean Field is very low since no stochasticity is involved. 
It is also worth noting that \eqref{eq:mfa} can be deduced by taking expectation on both sides of the jump SDE system \eqref{eq:jsde_recover}, (incorrectly) ignoring all correlations between terms in multiplications, and using the facts that $\ex[\dif N_{ij}(t)]=\alpha_{ij}\dif t$, $\ex[\dif R_j(t)] = \gamma_j(t)\dif t$ and $\ex[X_i(t^-)]=x_i(t)$.
More specifically, the product $\ex[X_i(t^-)X_j(t^-)]$ being replaced by $\ex[X_i(t^-)]\ex[X_j(t^-)]$ is due to the first-order moment closure as mentioned in \cite{Van-Mieghem:2009a} from a different point of view, and hence the result of Mean Field is overestimating the true influence.
Unfortunately, the error caused by such moment-closure cannot be estimated in general and hence it is unclear how to further improve the accuracy of Mean Field.
ConTinEst, on the other hand, is a recently developed approximation method based on effective samplings.
It employs the Least-Label-List technique presented in \cite{Cohen:1997a}.
ConTinEst can be applied to information propagation with infection time
distribution other than exponential. However, as the influence is estimated based on
coverage function, ConTinEst cannot estimate probability of an individual's activation state, nor the influence of propagations with recovery.

\subsection{Experiments on synthetic data}
In the first experiment, we test on the influence prediction problem using the basic propagation model (i.e., activation rates) without recovery scenario.
We first generate three networks: ER, small-world, and scale-free networks, each with $n=200$ nodes.  For Erd\H{o}s-R\'{e}nyi network, we randomly generate $m=[n\log(n)/2]$ (where $[a]$ denotes the integer closest to $a\in \Rbb$) edges and form a directed graph.
For small-world network, we start with a ring graph, and for each node we create a new link with probability $0.2$ and connect it to another node on the network randomly chosen with uniform distribution. 
For scale-free (preferential attachment) network, we add nodes one by one to the existing network, where each new node has $2$ links to the existing nodes, and the probability of linking to an existing node is proportional to the degree of that node.
For each network, we choose two nodes at random to form the source set. 
Then we simulate $10,000$ propagations using Monte Carlo method and compute the empirical mean of influence as the true influence (labeled as \textbf{True}).  
For ConTinEst method, we sample 1,000 trajectories and simulate 15 random labels in each trajectory. 
For the proposed jump SDE method, i.e., Algorithm \ref{alg:jsde}, we fix the time step size $h=0.01$ and sample $L=1,000$ trajectories. 
For the basic propagation model without recovery, the result of the comparison methods on these three networks are shown in Figure \ref{fig:influence_SI}. 

From Figure \ref{fig:influence_SI}, we can see that the proposed method generates highly accurate predictions as the estimated influence $\influhat(t)$ and individual activation probability $\xhat_i(t)$ have smallest relative error to the true $\influ(t)$ and $x_i(t)$ respectively (shown in the middle and right columns in Figure \ref{fig:influence_SI}) in every test network. 
This is in sharp contrast to Mean Field which yields much larger error than the proposed PropNet SDE.
ConTinEst also accurately estimated $\influ(t)$, however, it cannot estimate the individual activation probability $x_i(t)$ as the proposed PropNet SDE. 
\begin{figure}[t!]
\centering
\includegraphics[width=0.3\textwidth]{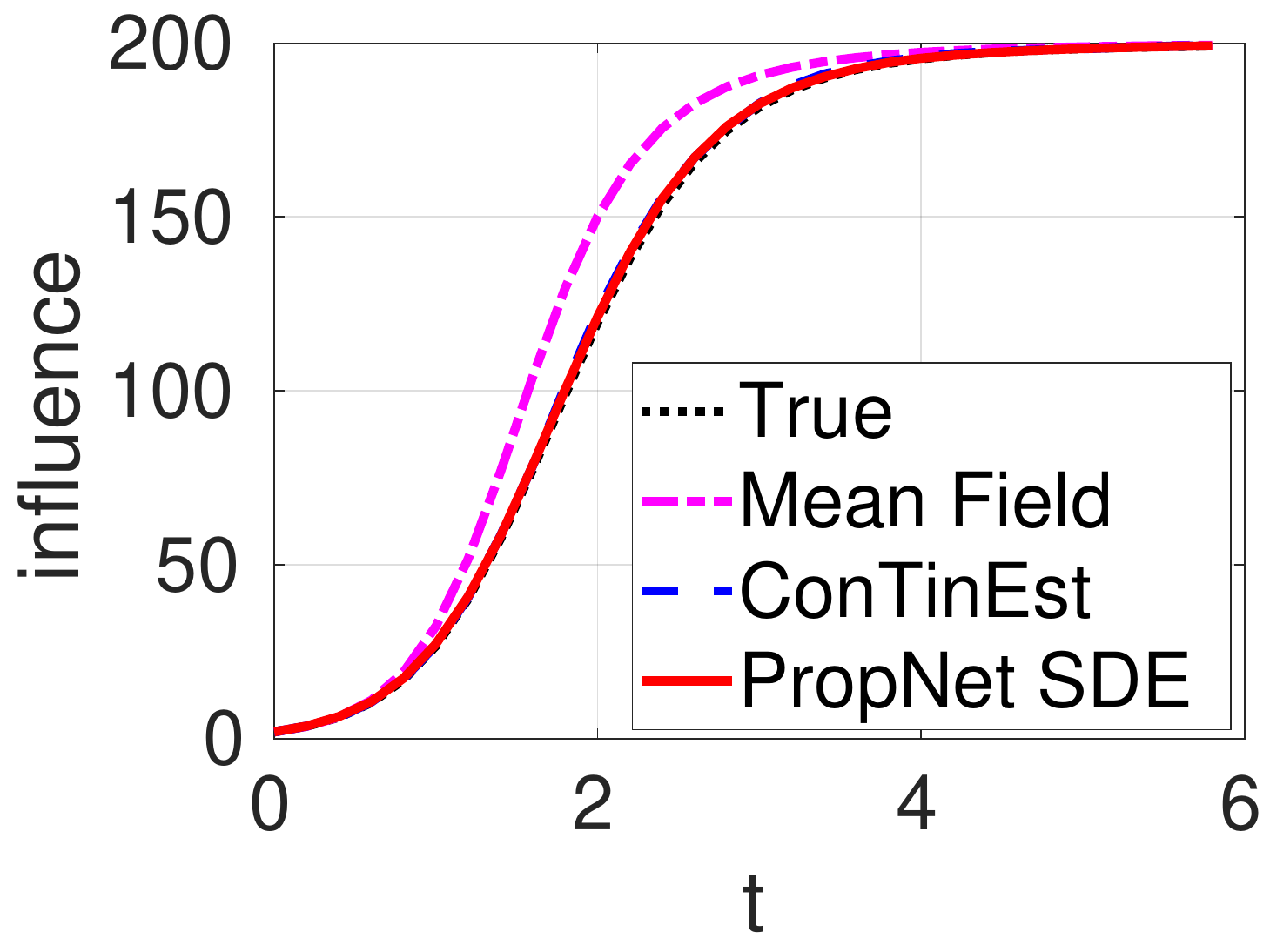}
\includegraphics[width=0.3\textwidth]{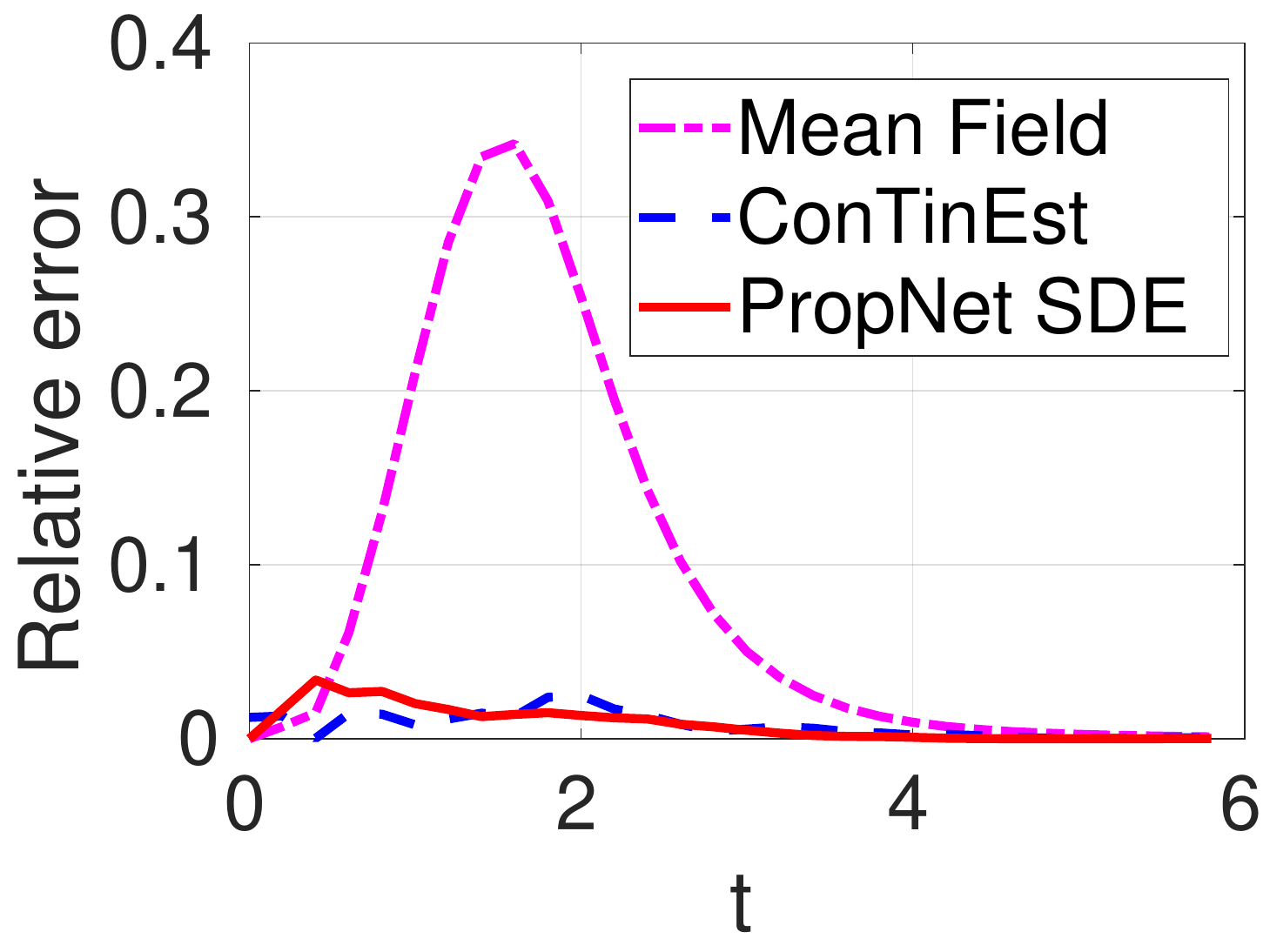}
\includegraphics[width=0.3\textwidth]{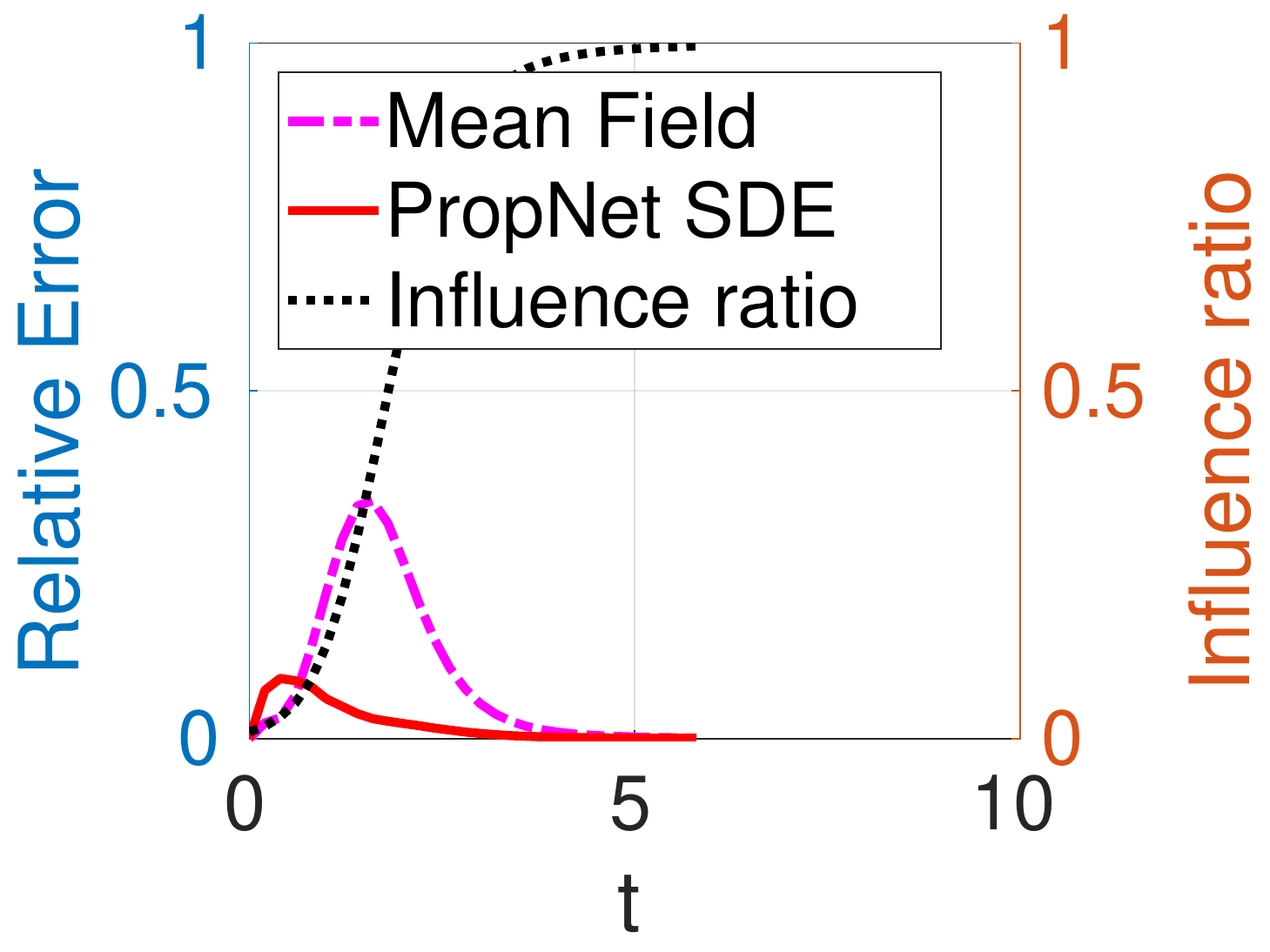}\\
\includegraphics[width=0.3\textwidth]{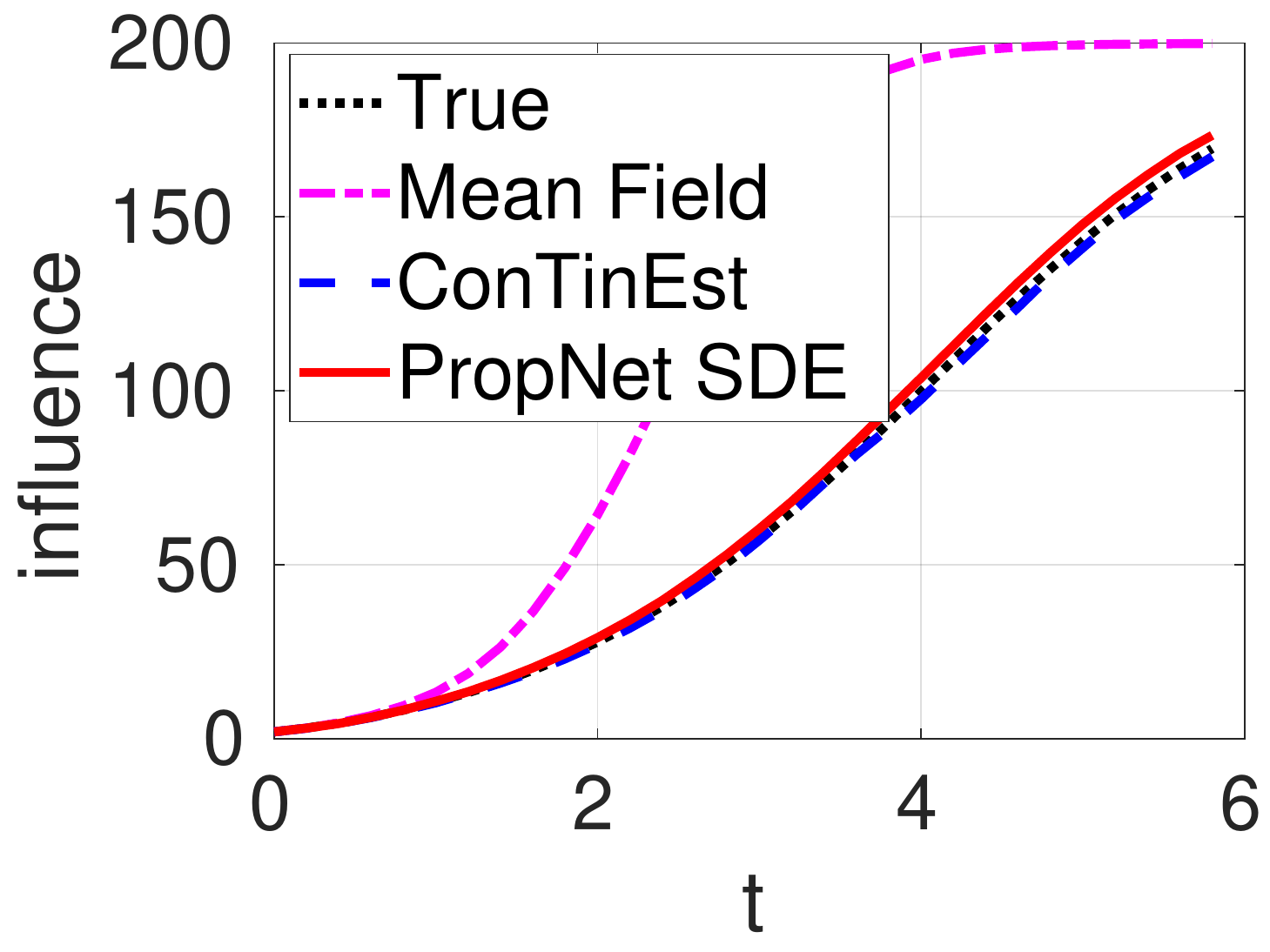}
\includegraphics[width=0.3\textwidth]{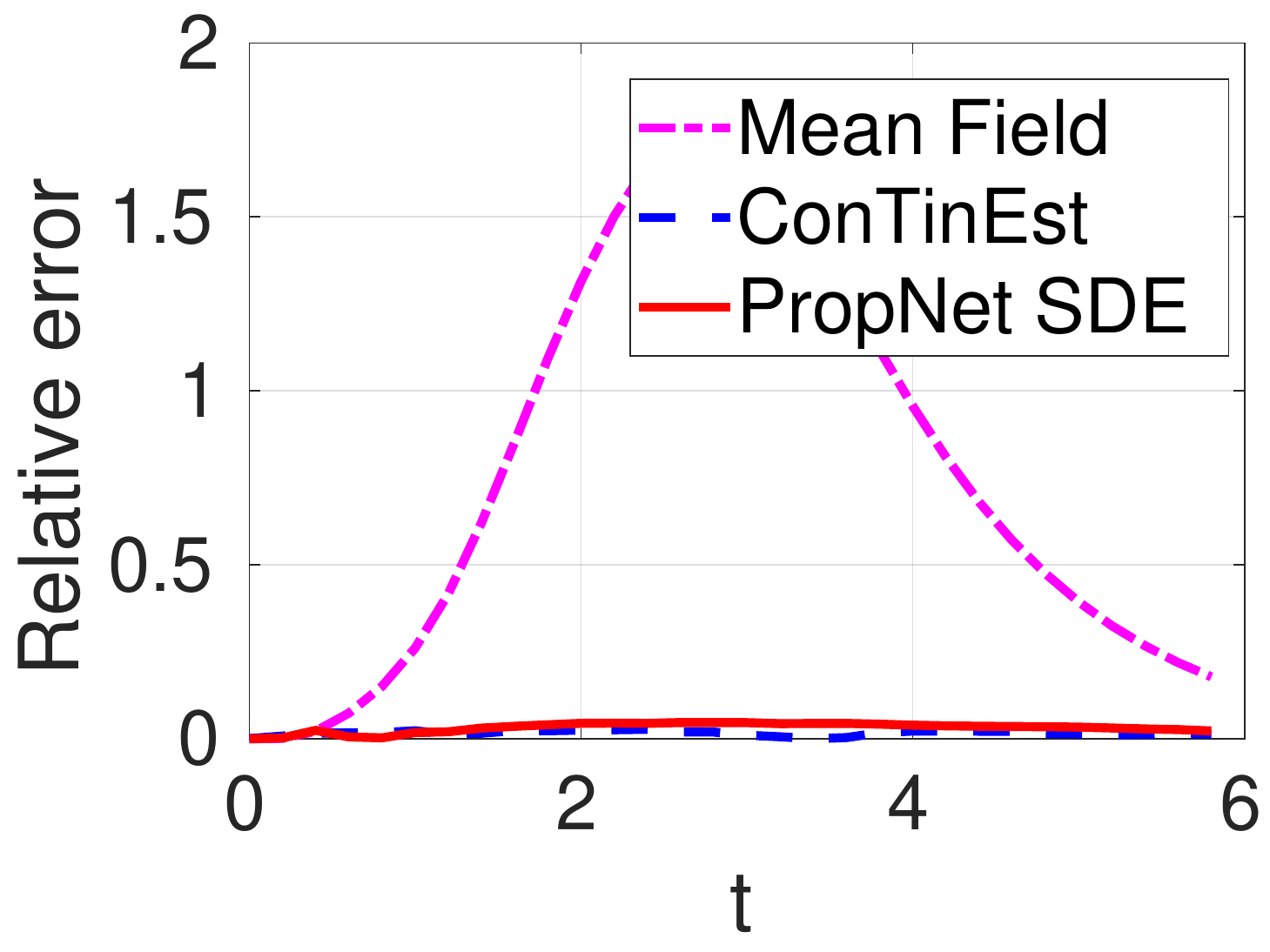}
\includegraphics[width=0.3\textwidth]{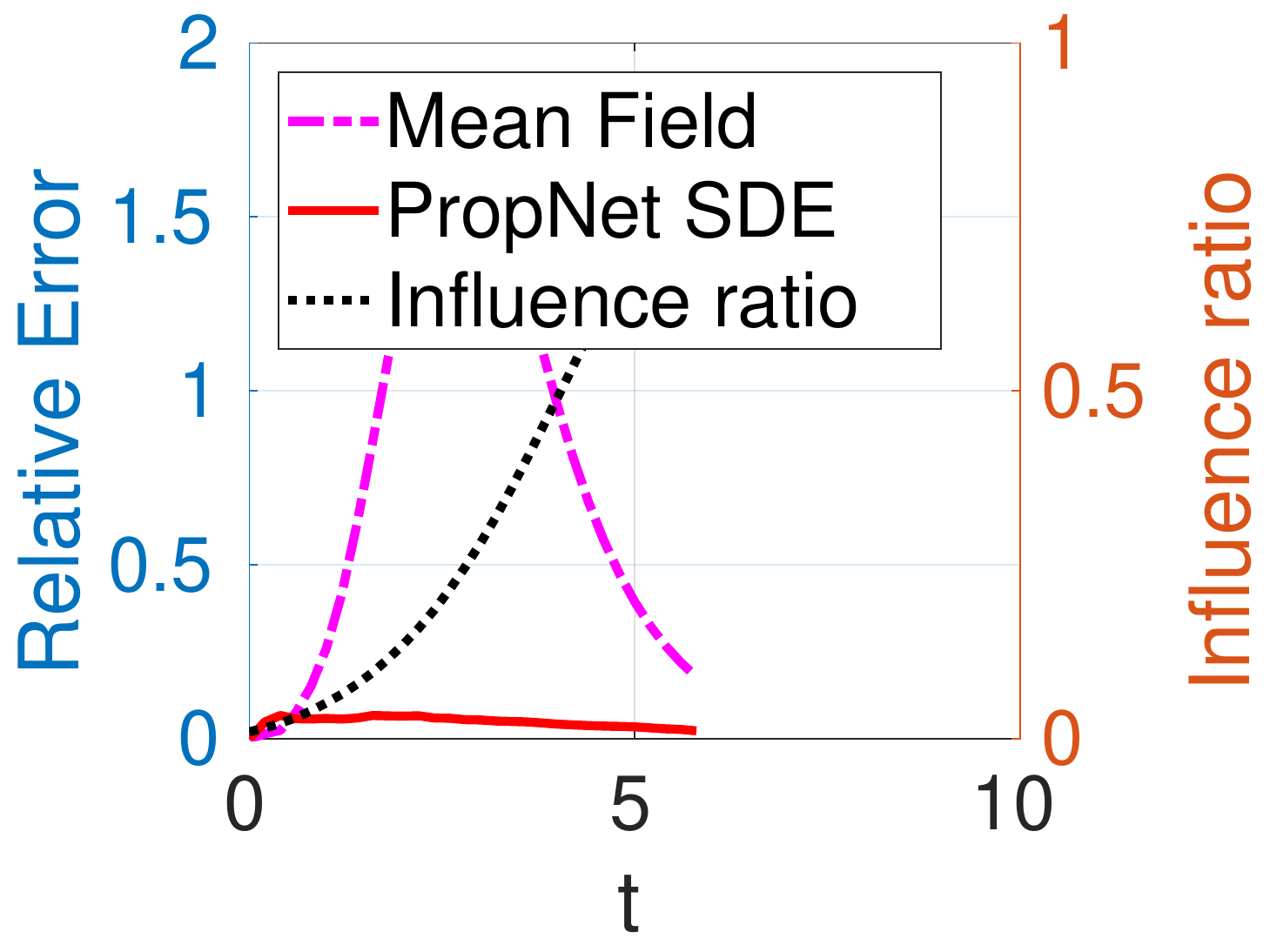}\\
\includegraphics[width=0.3\textwidth]{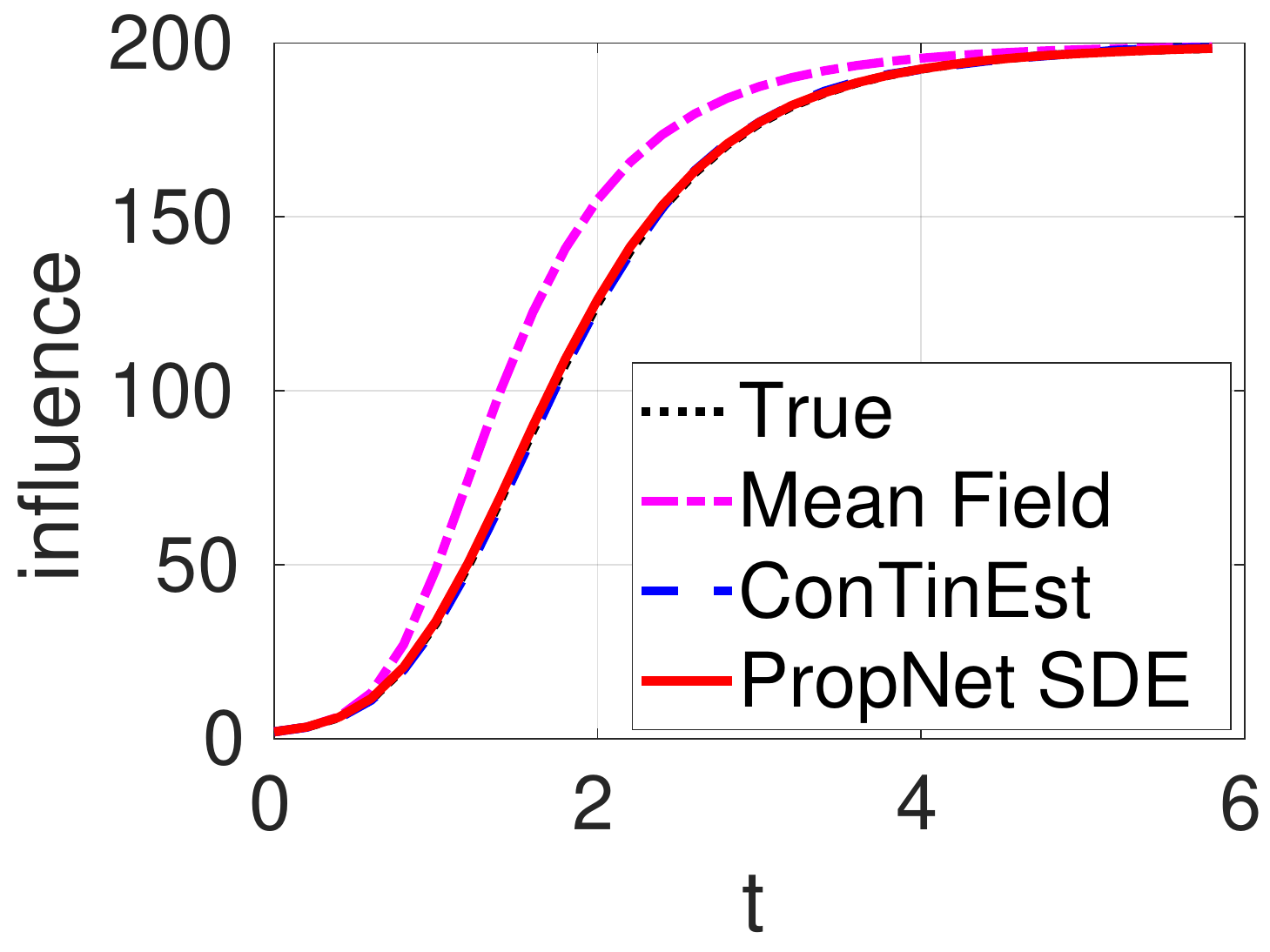}
\includegraphics[width=0.3\textwidth]{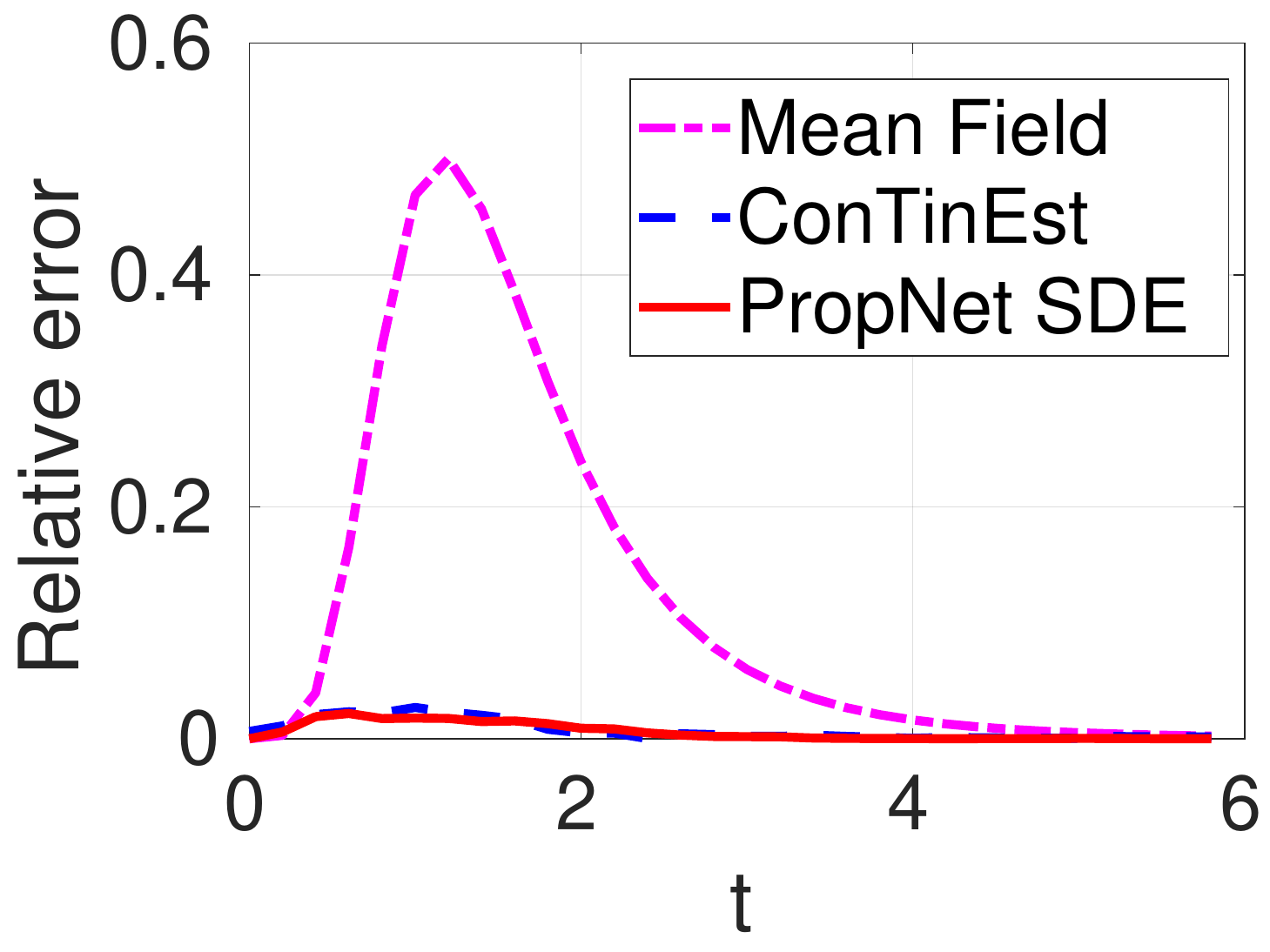}
\includegraphics[width=0.3\textwidth]{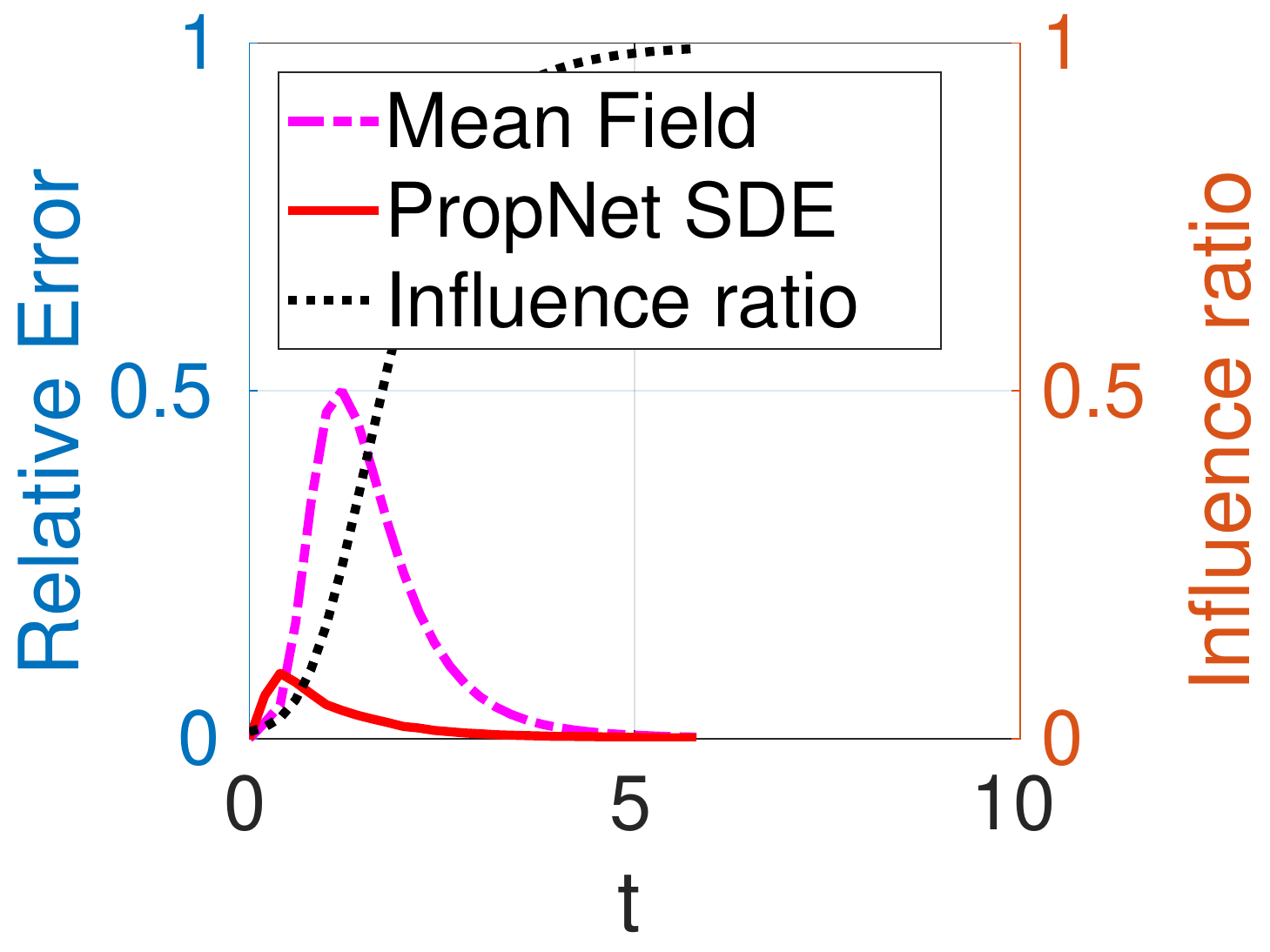}
\caption{Influence prediction by comparison methods on Erd\H{o}s-R\'{e}nyi network (top row), small-world network (middle row), and scale-free network (bottom row), all of size $n=200$, using the basic propagation model \underline{without} recovery. 
\textit{Left column}: True total influence $\influ(t)=\sum_i x_i(t)$ and influences $\influhat(t)=\sum_i \xhat_i(x)$ obtained by Mean Field, ConTinEst, and PropNet SDE.
\textit{Middle column}: relative error in influence $|\influhat(t)-\influ(t)|/\influ(t)$.
\textit{Right column}: relative error in individual activation probability $\sum_i|\xhat_i(t)-x_i(t)|/\sum_i x_i(t)$ (influence ratio $\influ(t)/n$ is plotted in black dotted line for reference).}
\label{fig:influence_SI}
\end{figure}

The second experiment is set similarly as the first one, but we incorporate recovery scenario such that an active node $i$ can recover at a constant rate $\gamma_i$.
In this test, the recovery rate for each node are chosen uniformly from $(0, 0.4)$. 
Since ConTinEst cannot handle the case with recoveries, we only compare Mean Field and PropNet SDE in this test.
The results are shown in Figure \ref{fig:influence_SIS}. 
We again observe that PropNet SDE has significant improvement over Mean Field in terms of accuracy, since the relative errors to $\influ(t)$ and $x_i(t)$ using PropNet SDE are much smaller than that using Mean Field.
\begin{figure}[t!]
\centering
\includegraphics[width=0.3\textwidth]{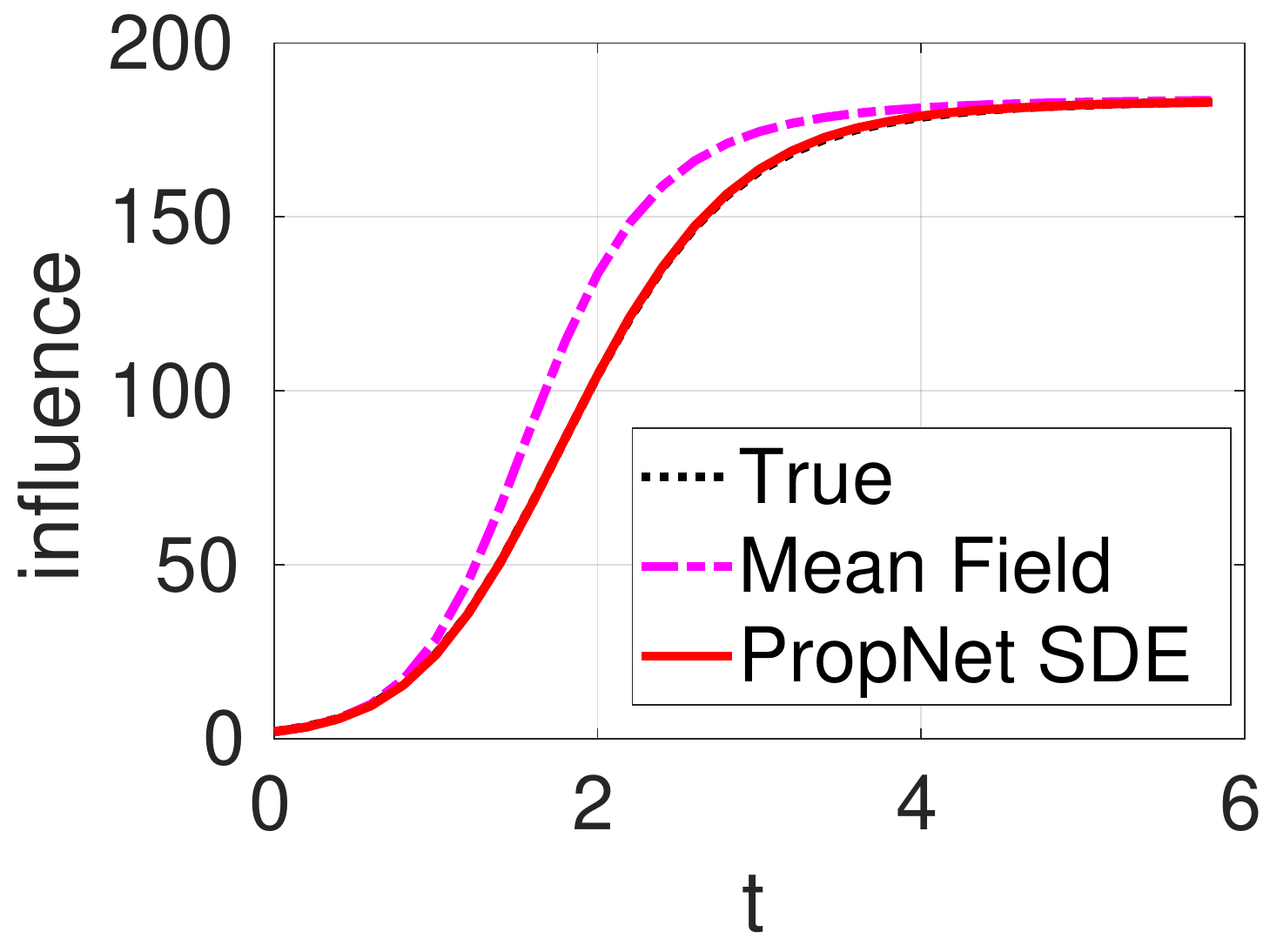}
\includegraphics[width=0.3\textwidth]{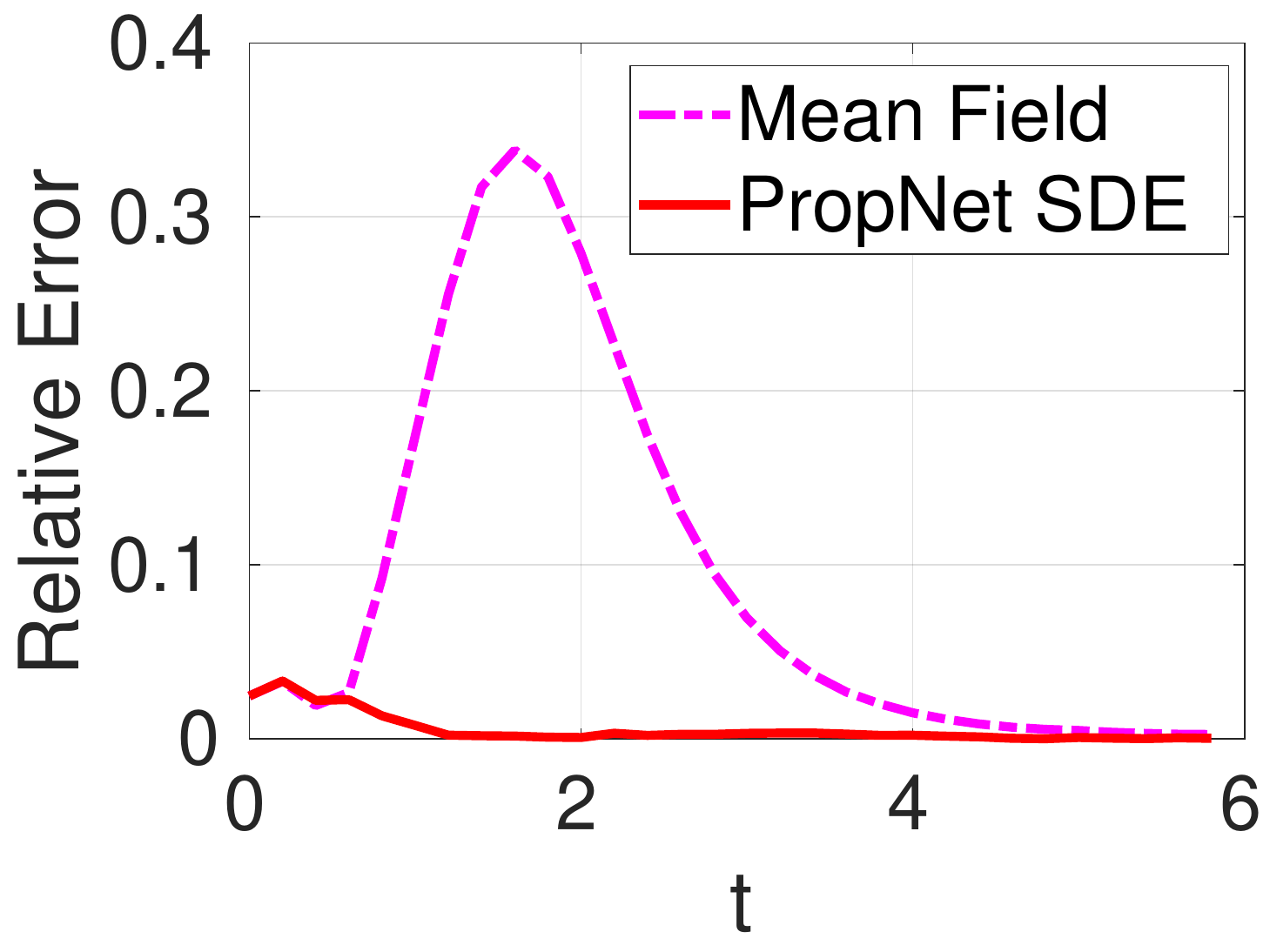}
\includegraphics[width=0.3\textwidth]{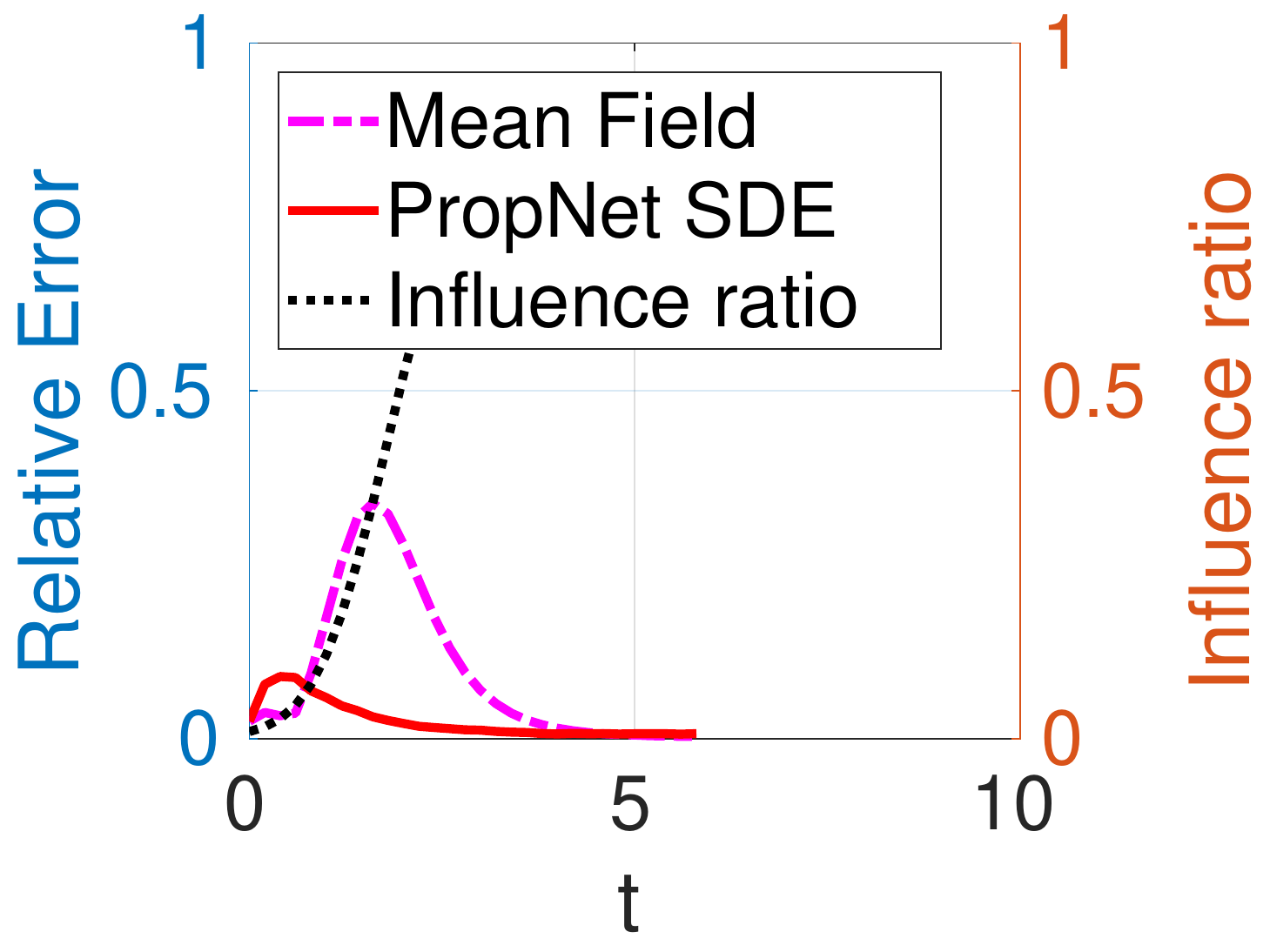}\\
\includegraphics[width=0.3\textwidth]{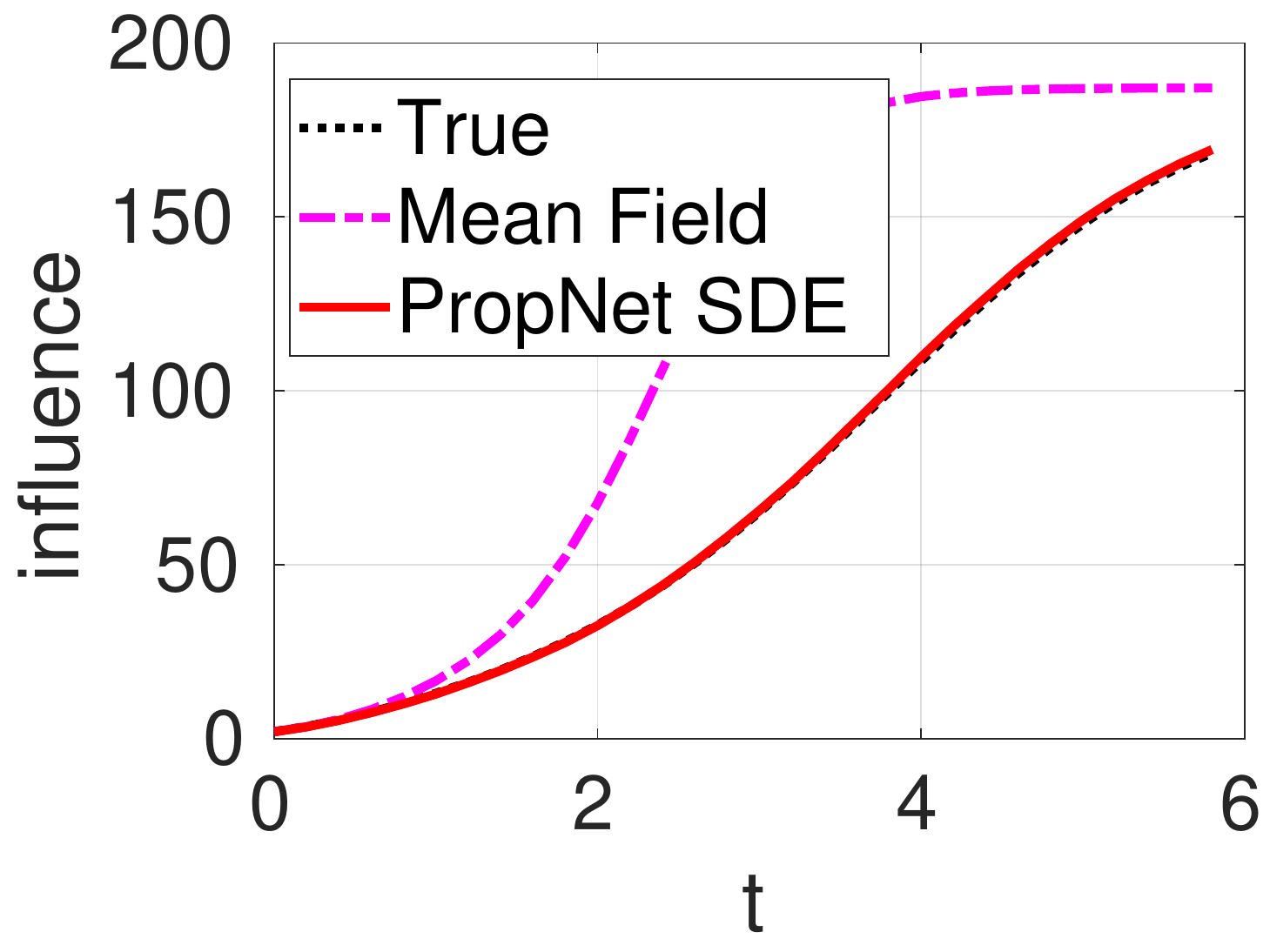}
\includegraphics[width=0.3\textwidth]{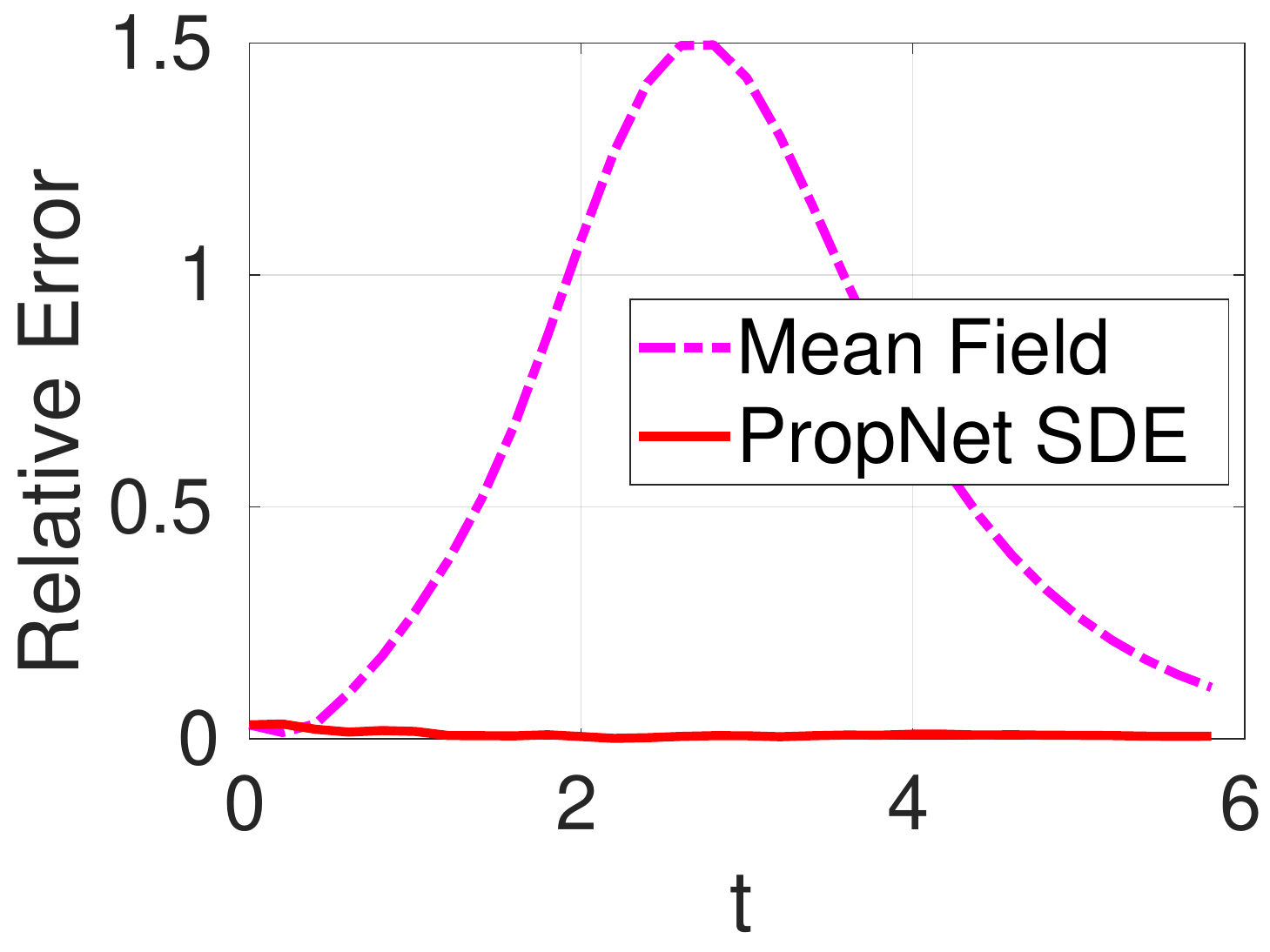}
\includegraphics[width=0.3\textwidth]{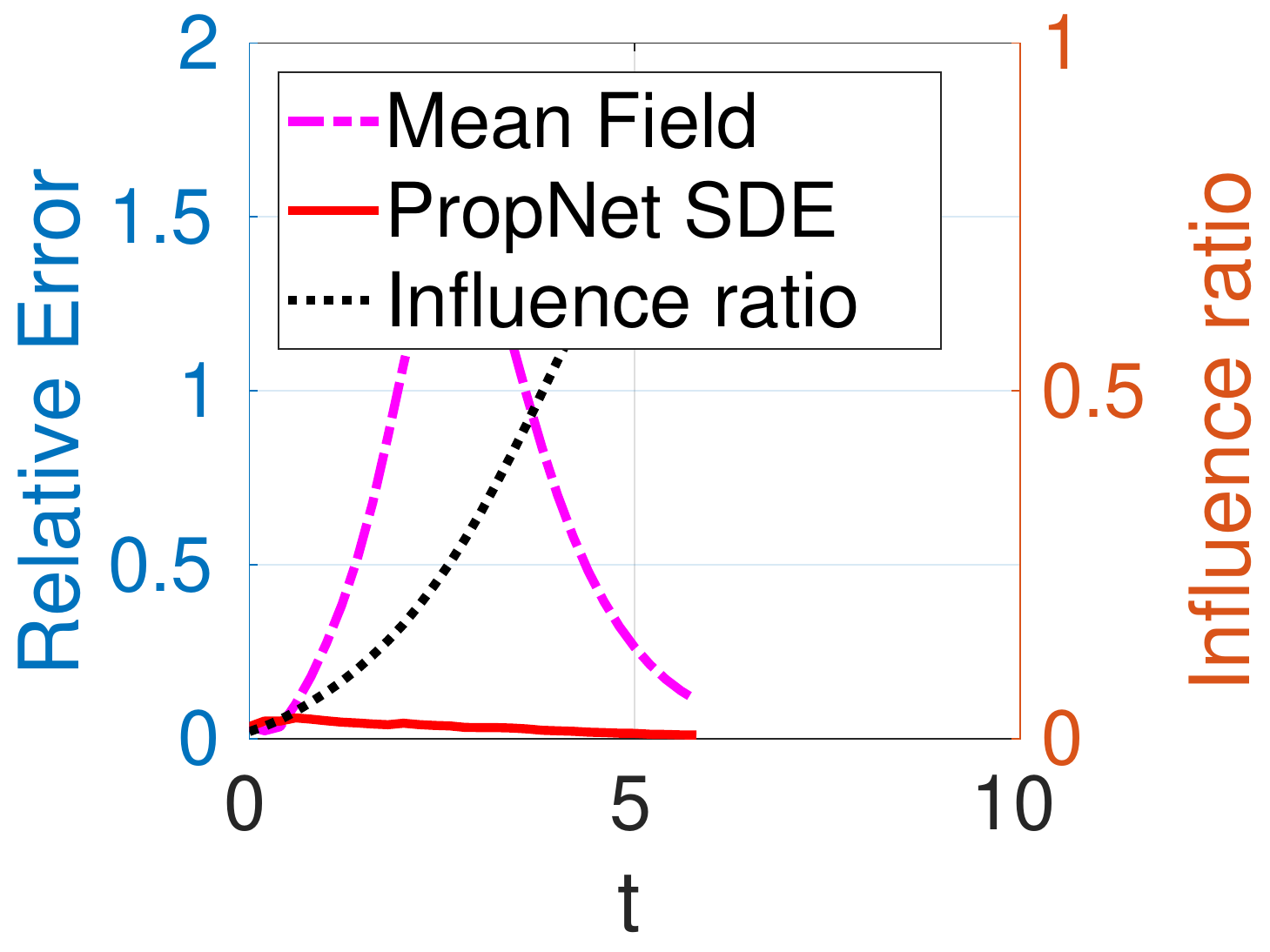}\\
\includegraphics[width=0.3\textwidth]{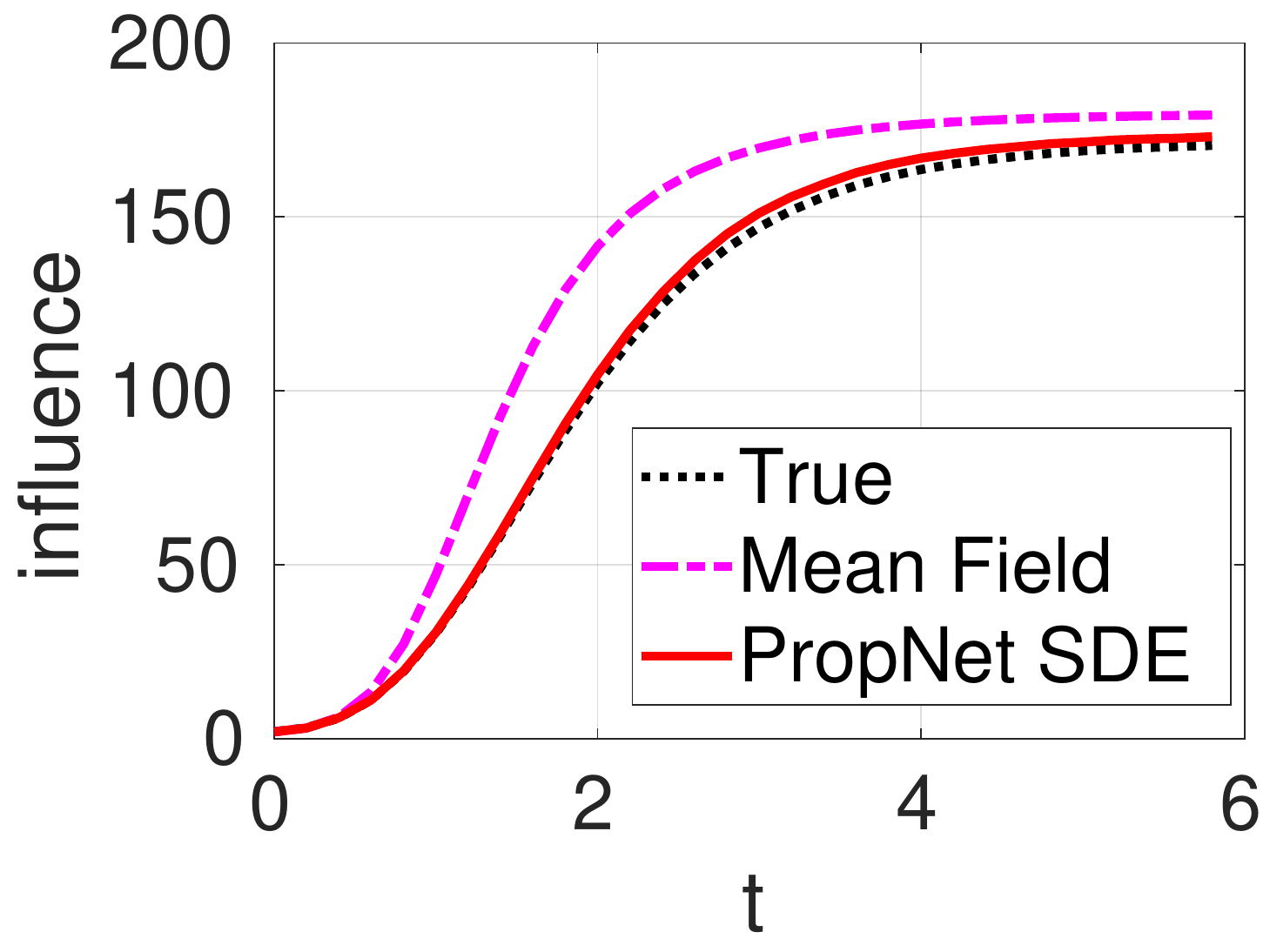}
\includegraphics[width=0.3\textwidth]{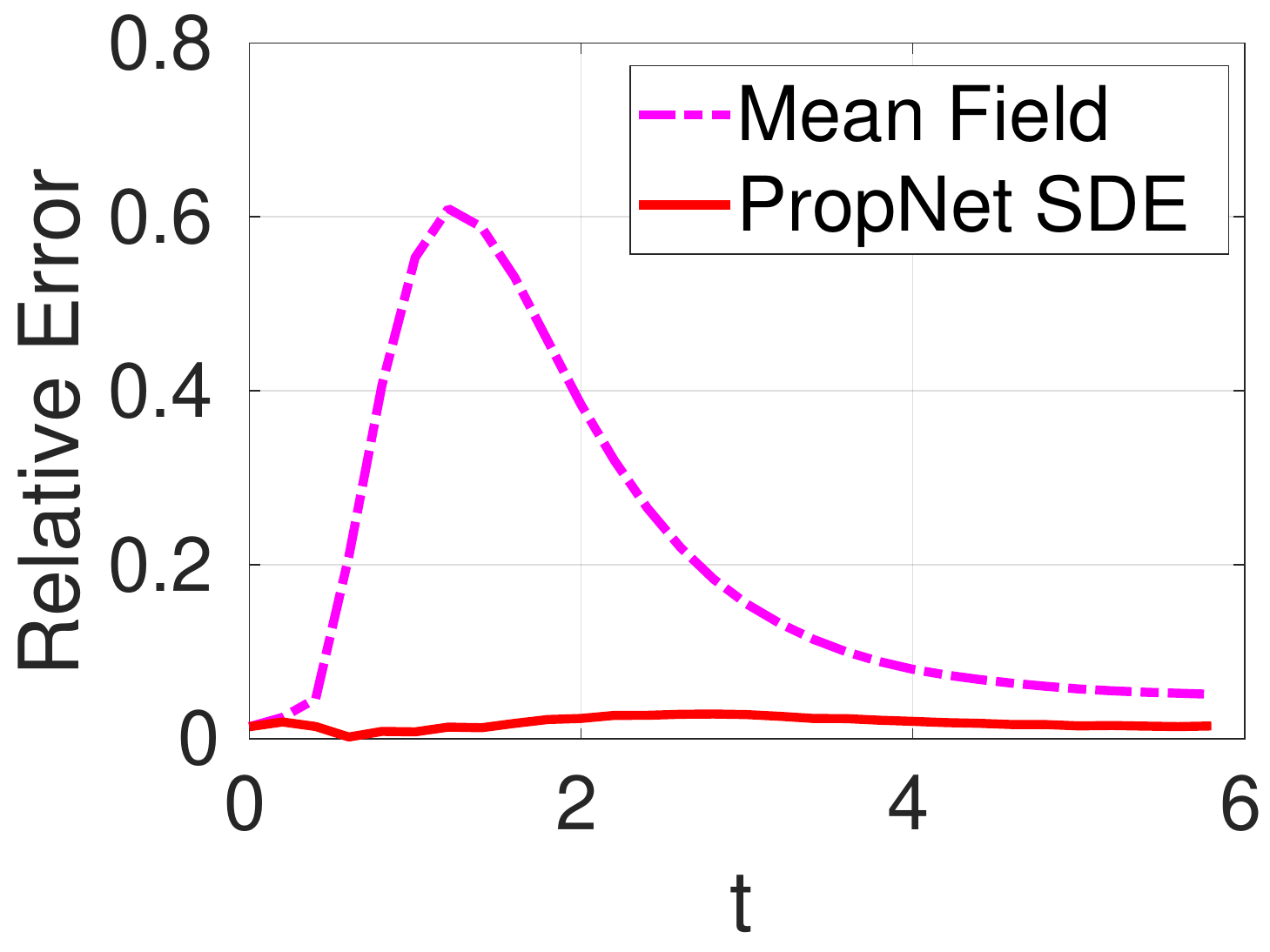}
\includegraphics[width=0.3\textwidth]{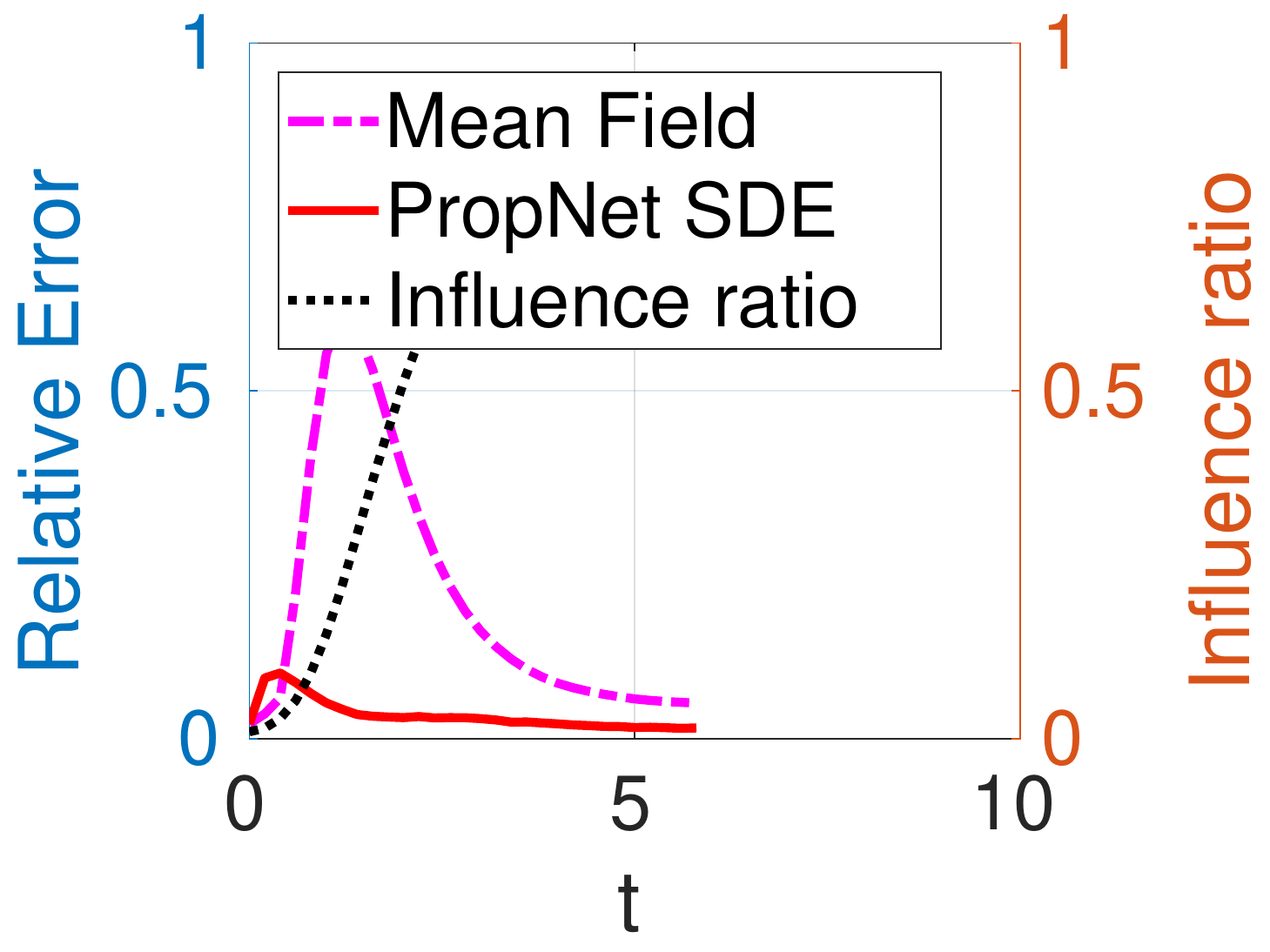}
\caption{Influence prediction by comparison methods on Erd\H{o}s-R\'{e}nyi network (top row), small-world network (middle row), and scale-free network (bottom row), all of size $n=200$, using the basic propagation model \underline{with} recovery. 
\textit{Left column}: True total influence $\influ(t)=\sum_i x_i(t)$ and influences $\influhat(t)=\sum_i \xhat_i(x)$ obtained by Mean Field and PropNet SDE.
\textit{Middle column}: relative error in influence $|\influhat(t)-\influ(t)|/\influ(t)$.
\textit{Right column}: relative error in individual activation probability $\sum_i|\xhat_i(t)-x_i(t)|/\sum_i x_i(t)$ (influence ratio $\influ(t)/n$ is plotted in black dotted line for reference).}
\label{fig:influence_SIS}
\end{figure}

In the third experiment, we evaluate the robustness of the comparison methods when the size of source set and network density vary.
For the first part of this experiment, we use the same three networks in the first and second experiments, and set five different sizes of source set: $n_0=2,5,10,15,20$. 
For each of the three networks and each of the five different $n_0$, we randomly select $n_0$ nodes as the source set, and apply the comparison methods to estimate influence of the basic propagation without recovery on the network and source set combination.
Such test is repeated for 20 times for each of the network and source set size combination, and the maximum absolute error, defined by $\max_{0\le t \le T} |\influhat(t) - \influ(t)|$, of all methods are shown in the first column of Figure \ref{fig:influence_density_source}(a).
In Figure \ref{fig:influence_density_source}(a), the standard deviation of ConTinEst and PropNet SDE are also shown at each source size (Mean Field is deterministic and hence no variance).
From Figure \ref{fig:influence_density_source}(a), we can see that PropNet SDE produces the lowest absolute error among the comparison methods, which indicates that it outperforms the others in terms of accuracy.
Figure \ref{fig:influence_density_source}(b) shows the result on the basic propagation model with recovery scenario, where we again observe that PropNet SDE is much more accurate than Mean Field, and ConTinEst is not capable to handle this situation.

In the second part of the third experiment, we generate the same three types of networks with different density levels (i.e., average node degree).
For Erd\H{o}s-R\'{e}nyi network, the number of edges is $m=[n\log(\kappa n)/2]$ for $\kappa=2,3,4,5,6$ repsectively.
For small-world network, the starting network is a regular graph where each node is connected to its respectively $\kappa=2,3,4,5,6$ nearest neighbors and the probability of creating a short cut of each node is set to $0.2$.
For scale-free network, the number of links that each newly added node has is set to $\kappa=2,3,4,5,6$ respectively.
For each network type, we generate a network using each of those density parameters $\kappa$ and apply ConTinEst and PropNet SDE for 20 times to obtain their means and standard deviations of maximum absolute error.
The results of the error versus the density parameter $\kappa$ by the comparison methods are given in Figure \ref{fig:influence_density_source} (c) for the case without recovery and (d) for the case with recovery, respectively.
From these plots, we can see that PropNet SDE consistently achieves smaller error than other methods in both cases without and with recovery.
\begin{figure}[t!]
\centering
\begin{tabular}{cccc}
\includegraphics[width=0.22\textwidth]{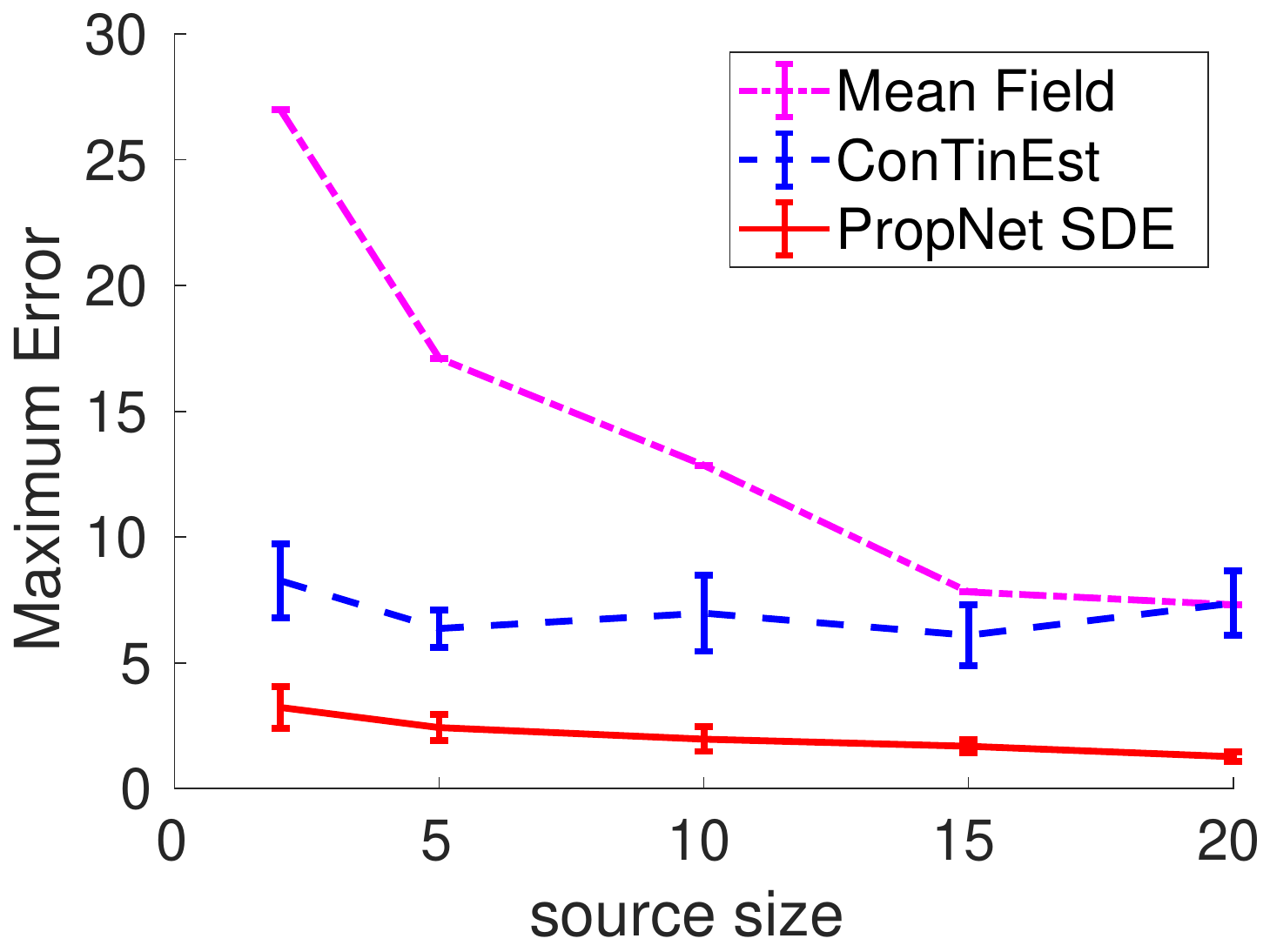} &
\includegraphics[width=0.22\textwidth]{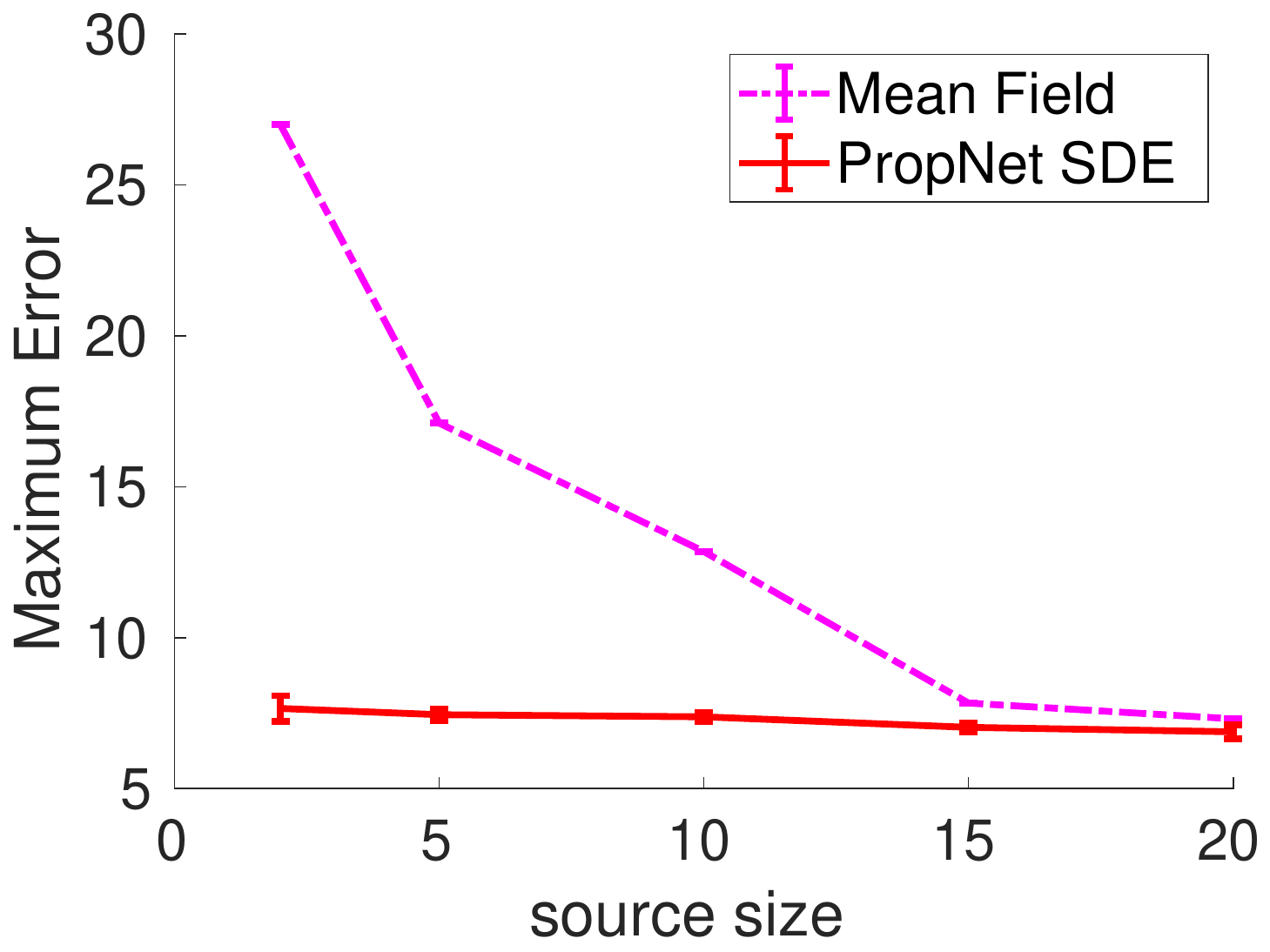} &
\includegraphics[width=0.22\textwidth]{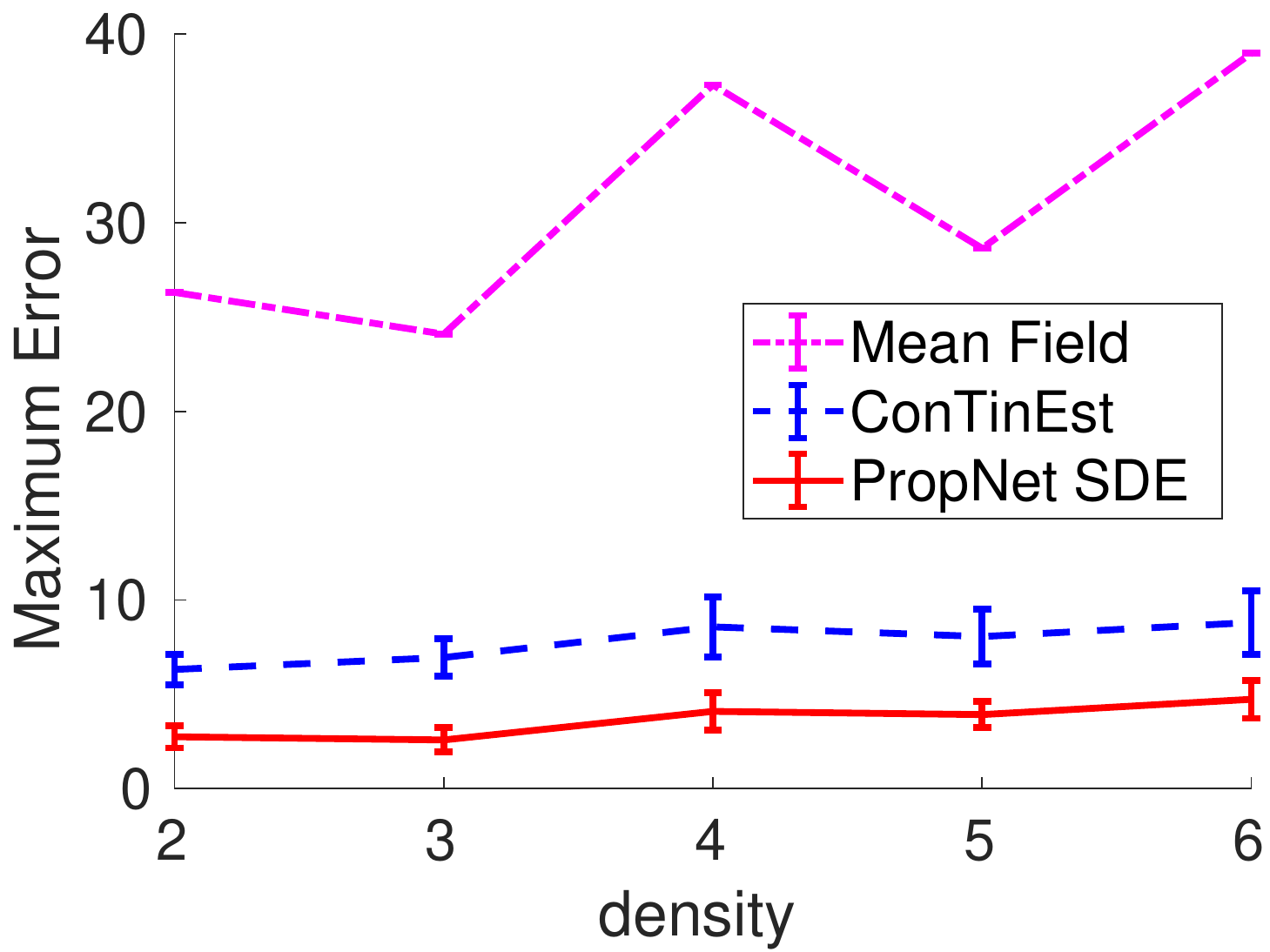} &
\includegraphics[width=0.22\textwidth]{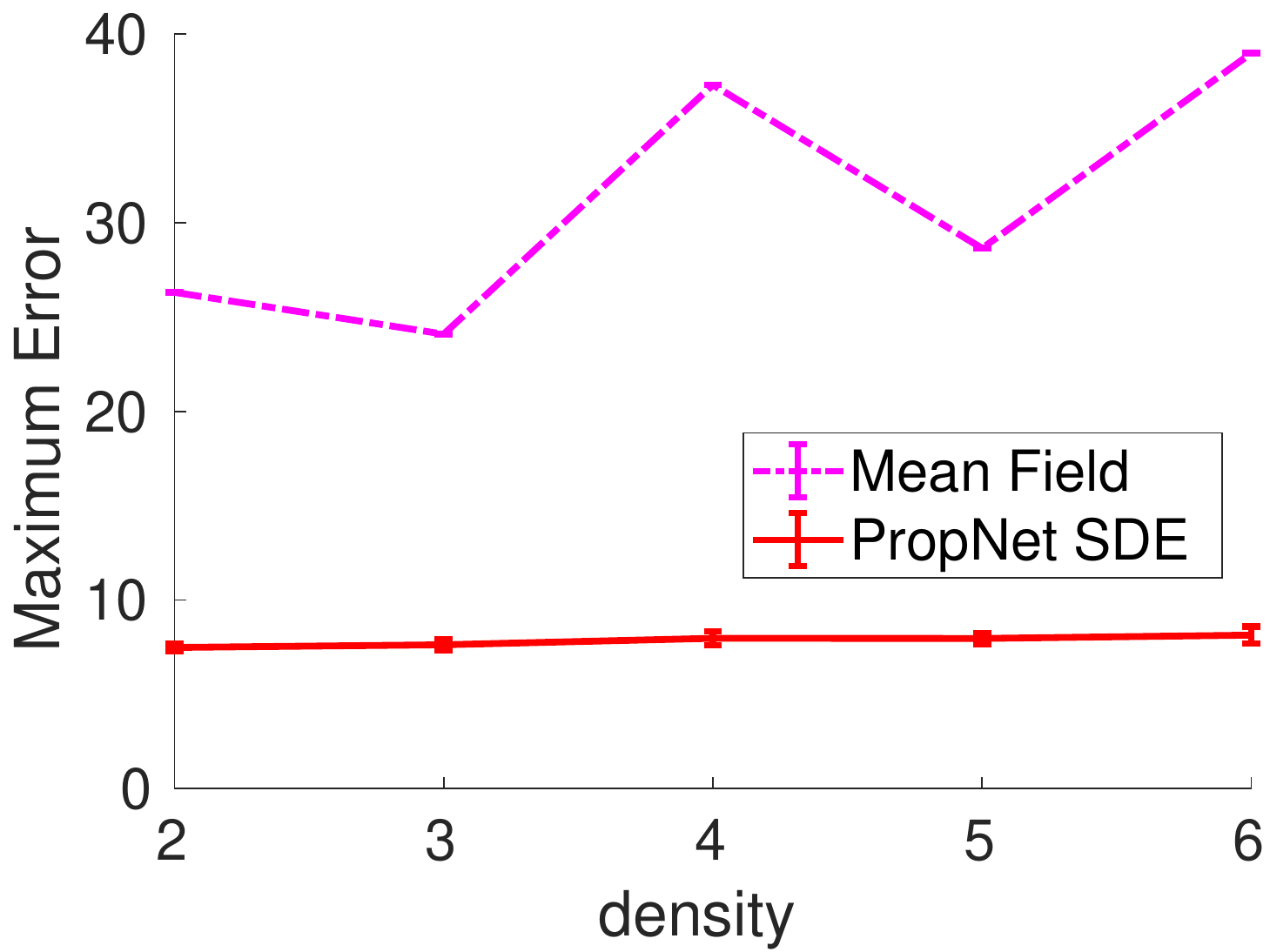}\\
\includegraphics[width=0.22\textwidth]{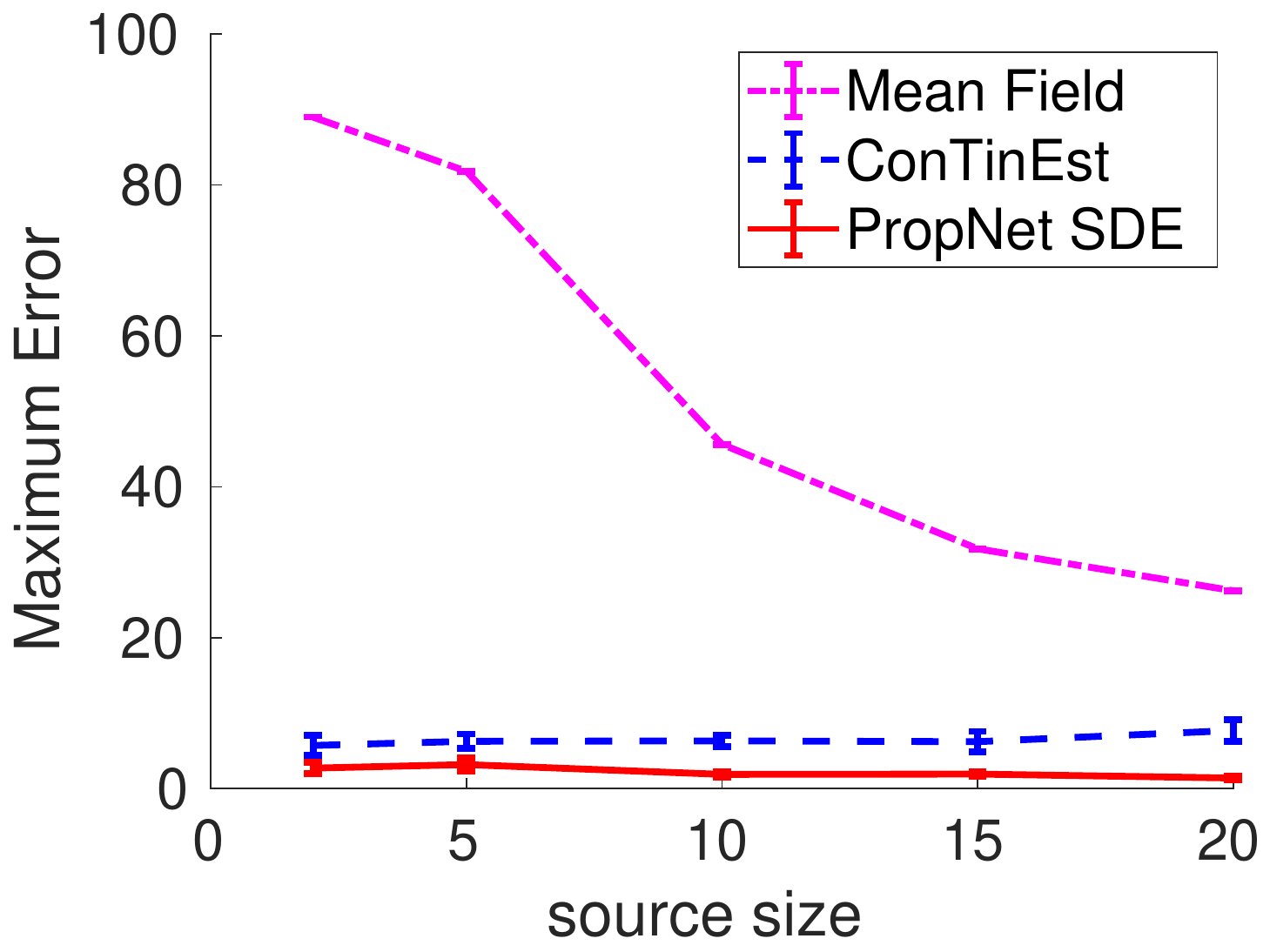} &
\includegraphics[width=0.22\textwidth]{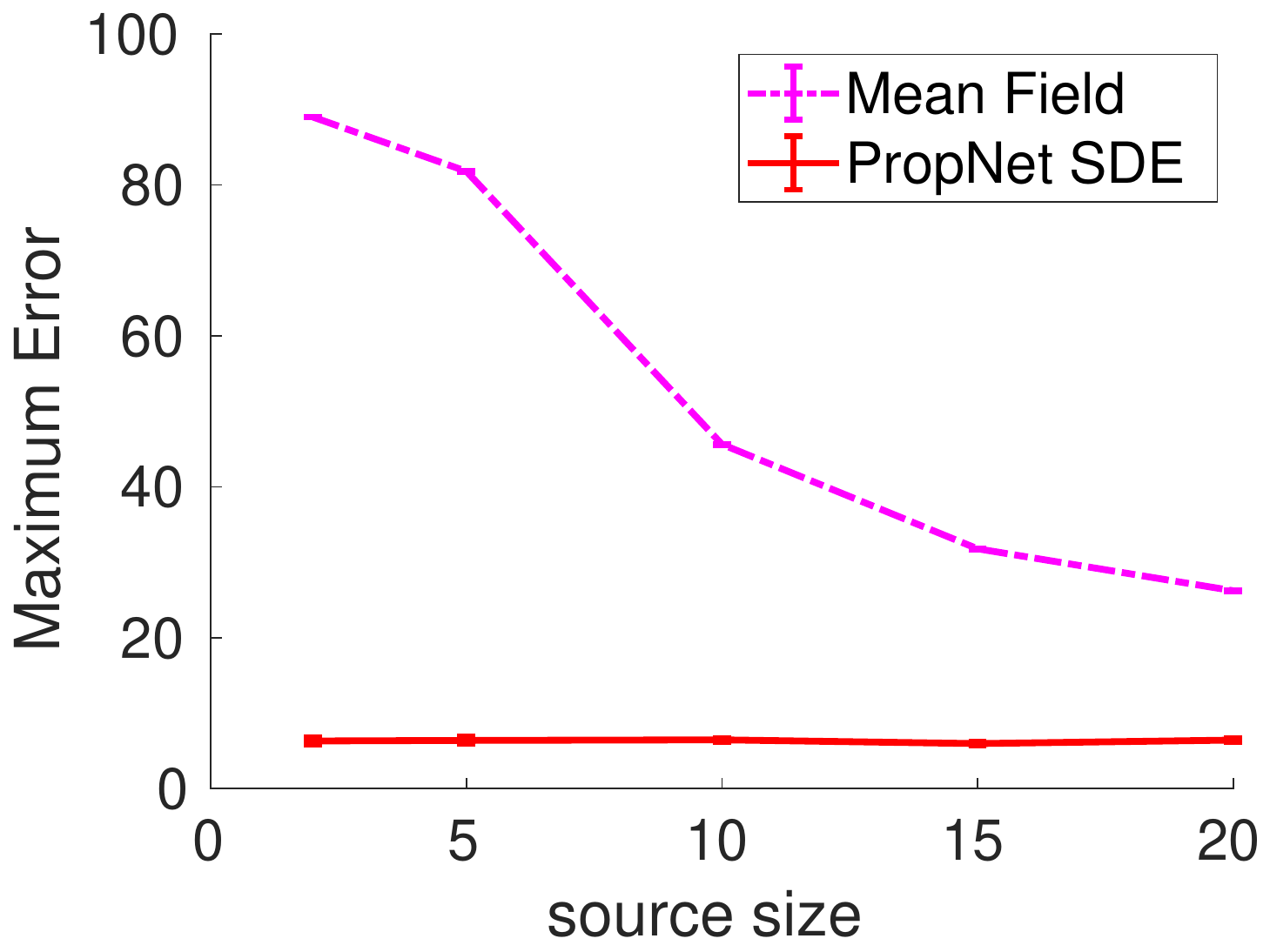} &
\includegraphics[width=0.22\textwidth]{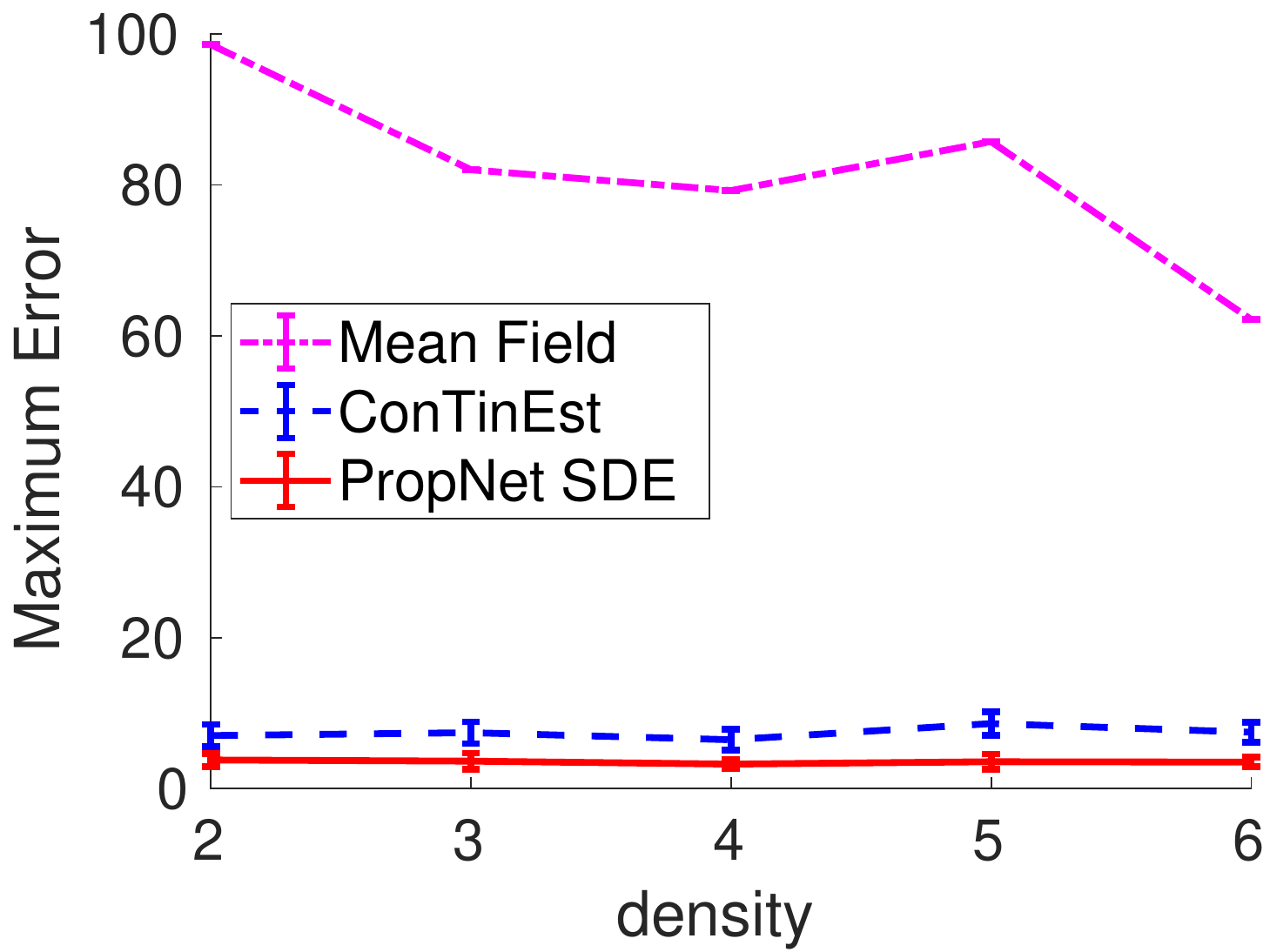} &
\includegraphics[width=0.22\textwidth]{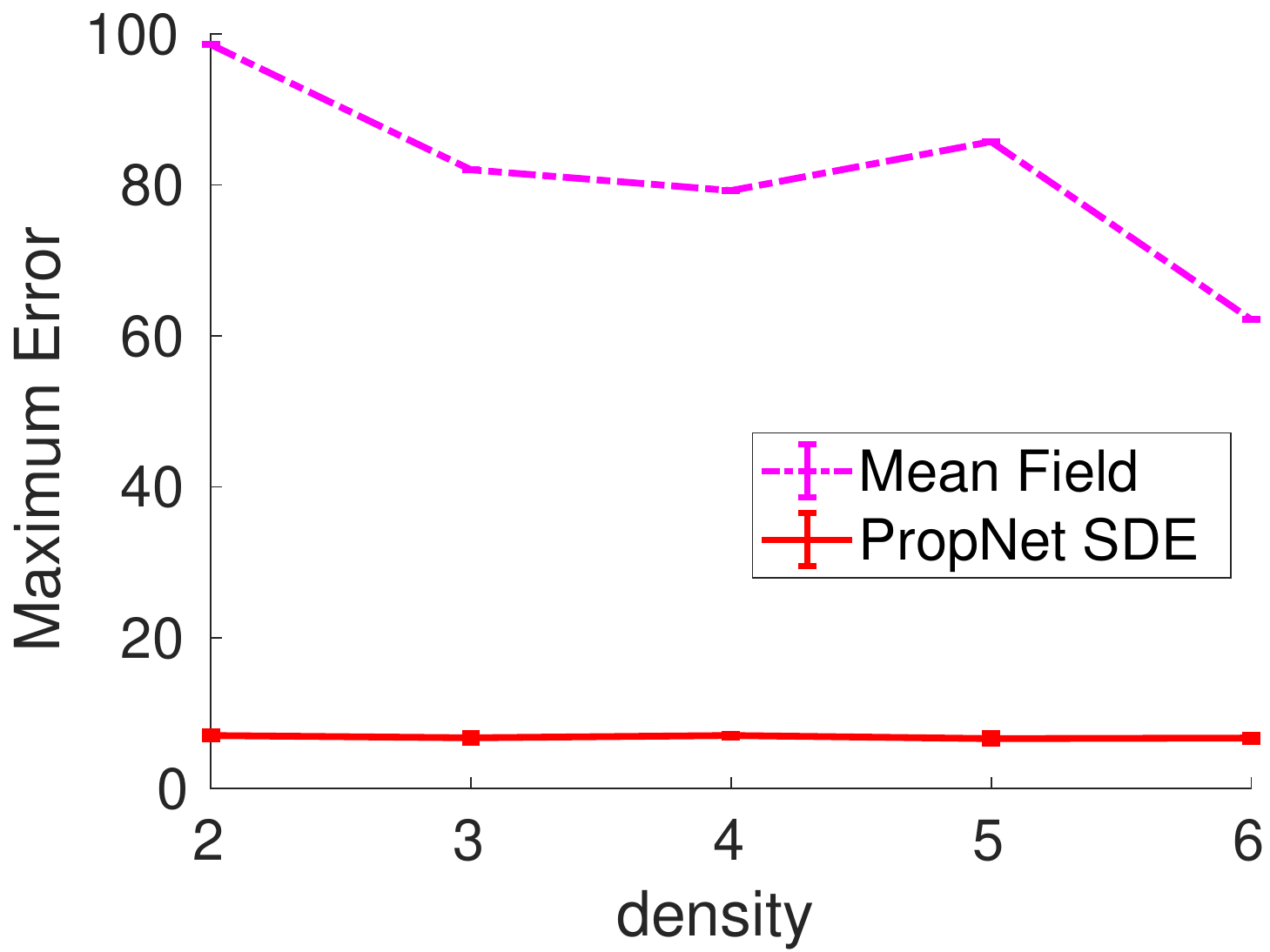}\\\includegraphics[width=0.22\textwidth]{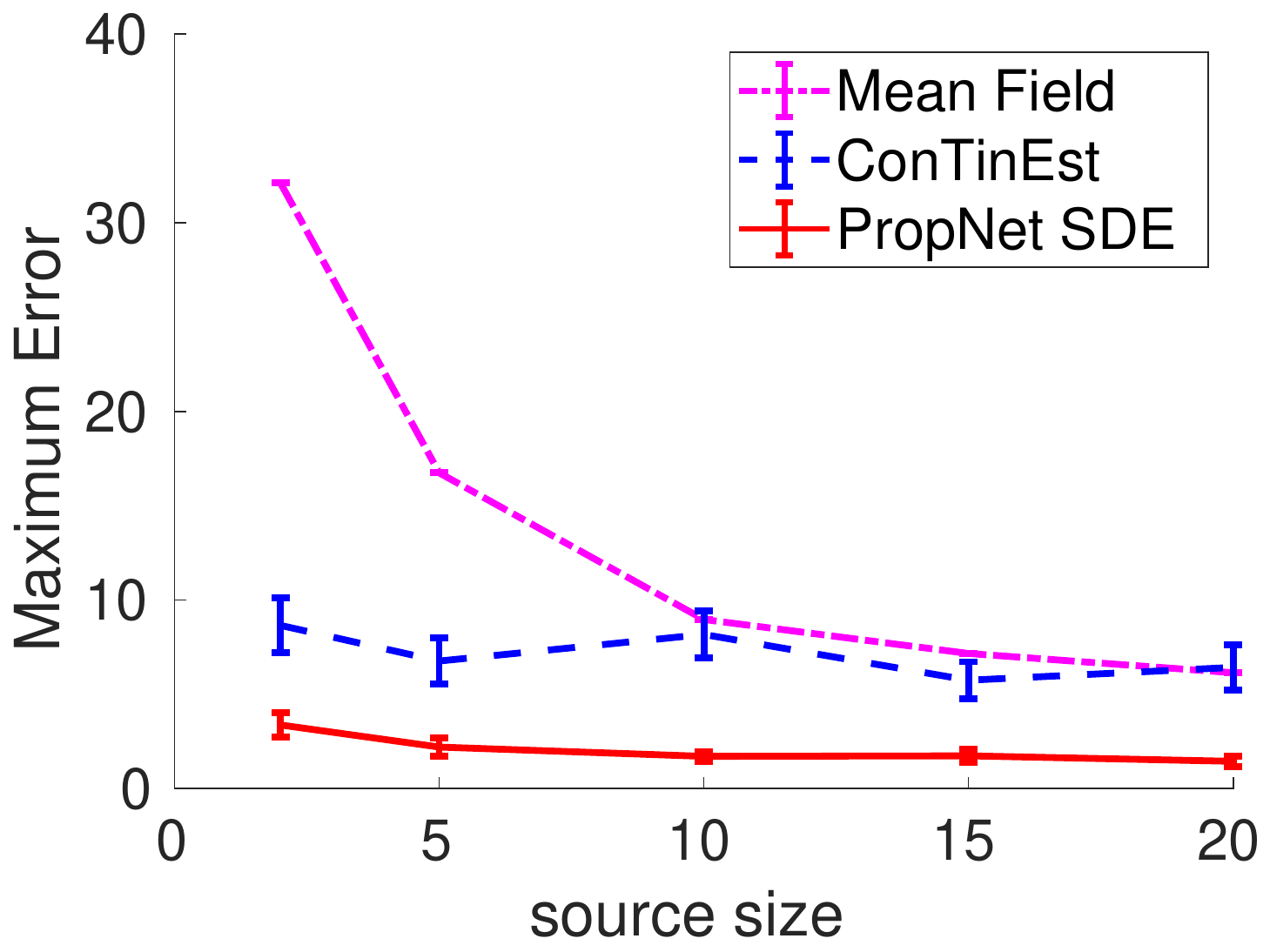} &
\includegraphics[width=0.22\textwidth]{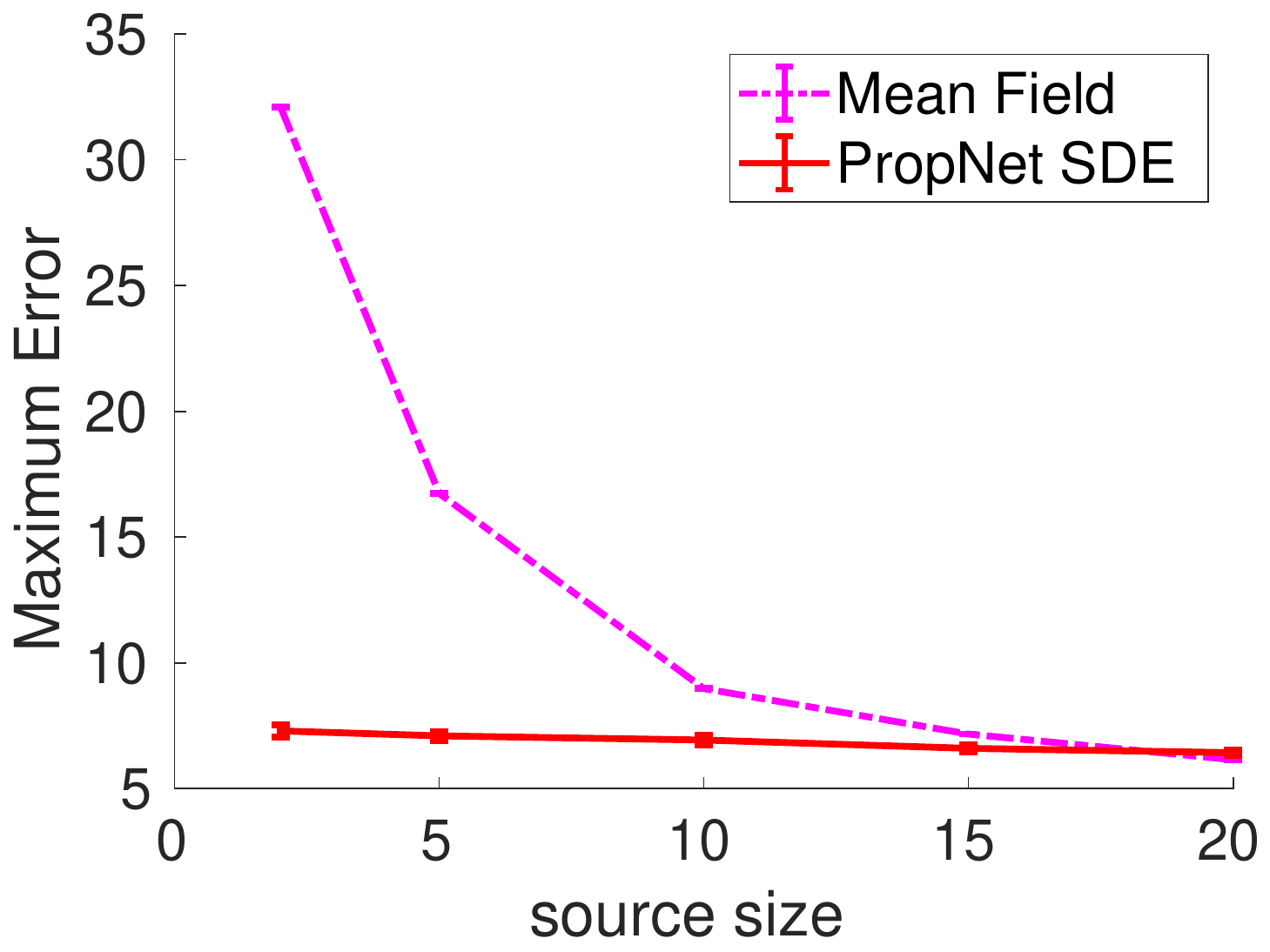} &
\includegraphics[width=0.22\textwidth]{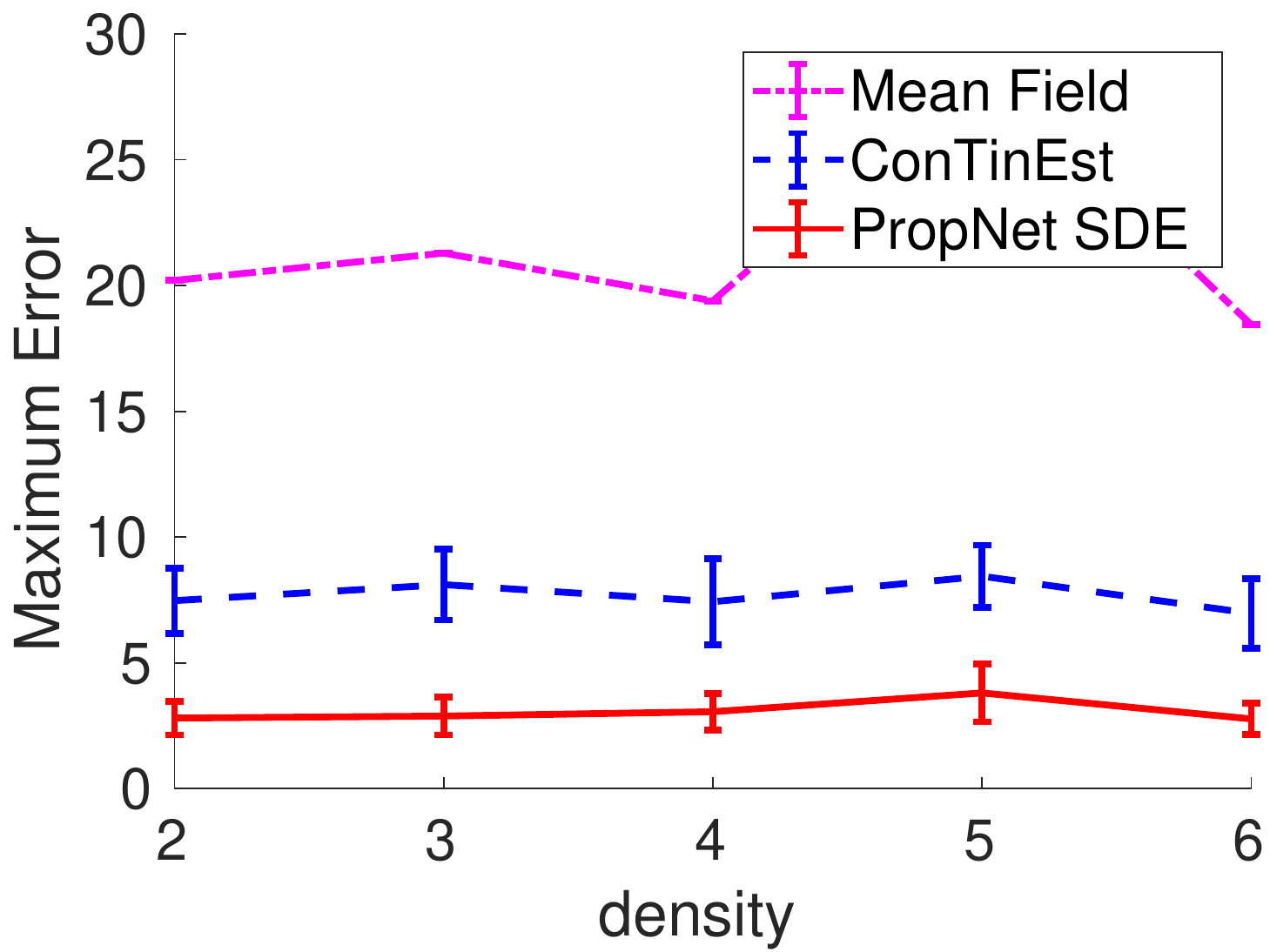} &
\includegraphics[width=0.22\textwidth]{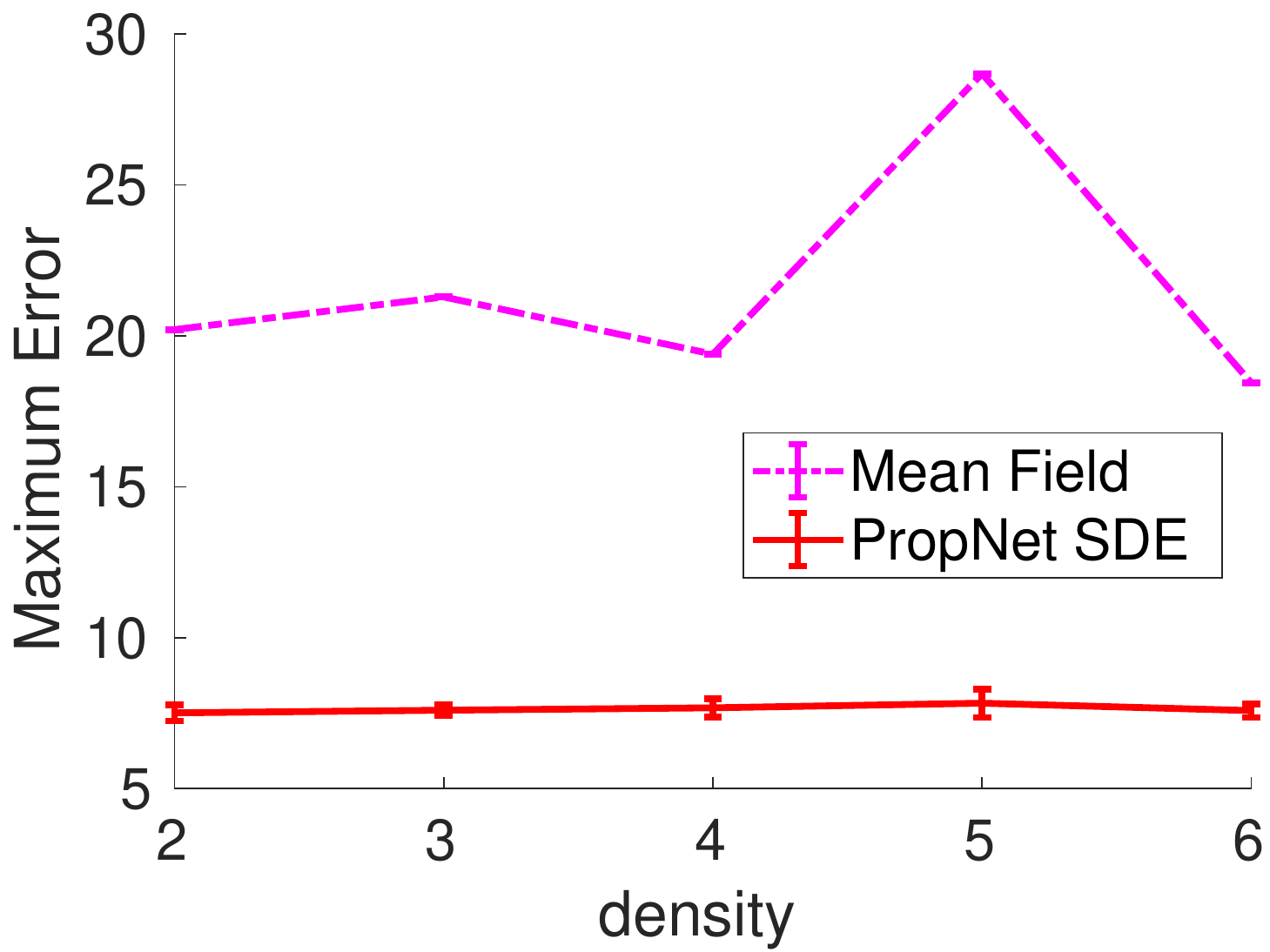} \\
(a) & (b) & (c) & (d)
\end{tabular}
\caption{Robustness test of comparison methods on Erd\H{o}s-R\'{e}nyi network (top row), small-world network (middle row), and scale-free network (bottom row).
\textit{Column (a)}: the maximum absolute error $\max_{0\le t\le T}|\influhat(t)-\influ(t)|/\influ(t)$ versus different sizes of source set using the basic propagation model without recovery;
\textit{Column (b)}: the maximum absolute error versus different sizes of source set using the basic propagation model with recovery;
\textit{Column (c)}: the maximum absolute error versus different density levels using the basic propagation model without recovery;
\textit{Column (d)}: the maximum absolute error versus different density levels using the basic propagation model with recovery.}
\label{fig:influence_density_source}
\end{figure}

In the fourth and the last experiment, we consider the propagation model where activation rates are time-varying instead of constant.
More specifically, $t_{ij}$, the time for node $i$ to activate $j$, follows the Rayleigh distribution (a specific type of Weibull distribution with $\theta=2$).
For ConTinEst, we generate $1,000$ simulations where each one has $15$ least labels.
For PropNet SDE, we use step size $h=0.01$ and $L=1,000$.
The results without recovery scenario are shown in Figure \ref{fig:influence_weibull_SI}.
Mean Field relies on the Markov property of constant activation rates and hence is not capable to handle this situation.
For ConTinEst and PropNet SDE, we show their estimated influence $\influhat(t)$ and the corresponding relative error $|\influhat(t)-\influ(t)|/\influ(t)$ in the first and second columns of Figure \ref{fig:influence_weibull_SI}, respectively.
In particular, we observe that both methods are accurate, whereas the proposed PropNet SDE tends to yield smaller error in middle to late stages of the propagation.
In addition, we also show the relative accumulated error in individual activation probability, $\sum_i|\xhat_i(t)-x_i(t)|/\sum_i x_i(t)$, of the proposed PropNet SDE method in the right column of Figure \ref{fig:influence_weibull_SI}.
The influence ratio $\influ(t)/n$ is again plotted in black dotted line for reference.
As we can see, PropNet SDE attains very small relative error which indicates that all $x_i(t)$ are estimated accurately.
Note that ConTinEst is not capable to compute such estimations. 
The results are shown in Figure \ref{fig:influence_weibull_SIS}.
We again observe accurate prediction of both total influence $\influ(t)$ and individual activation probability in the middle and right columns of Figure \ref{fig:influence_weibull_SIS}.
\begin{figure}[t!]
\centering
\includegraphics[width=0.3\textwidth]{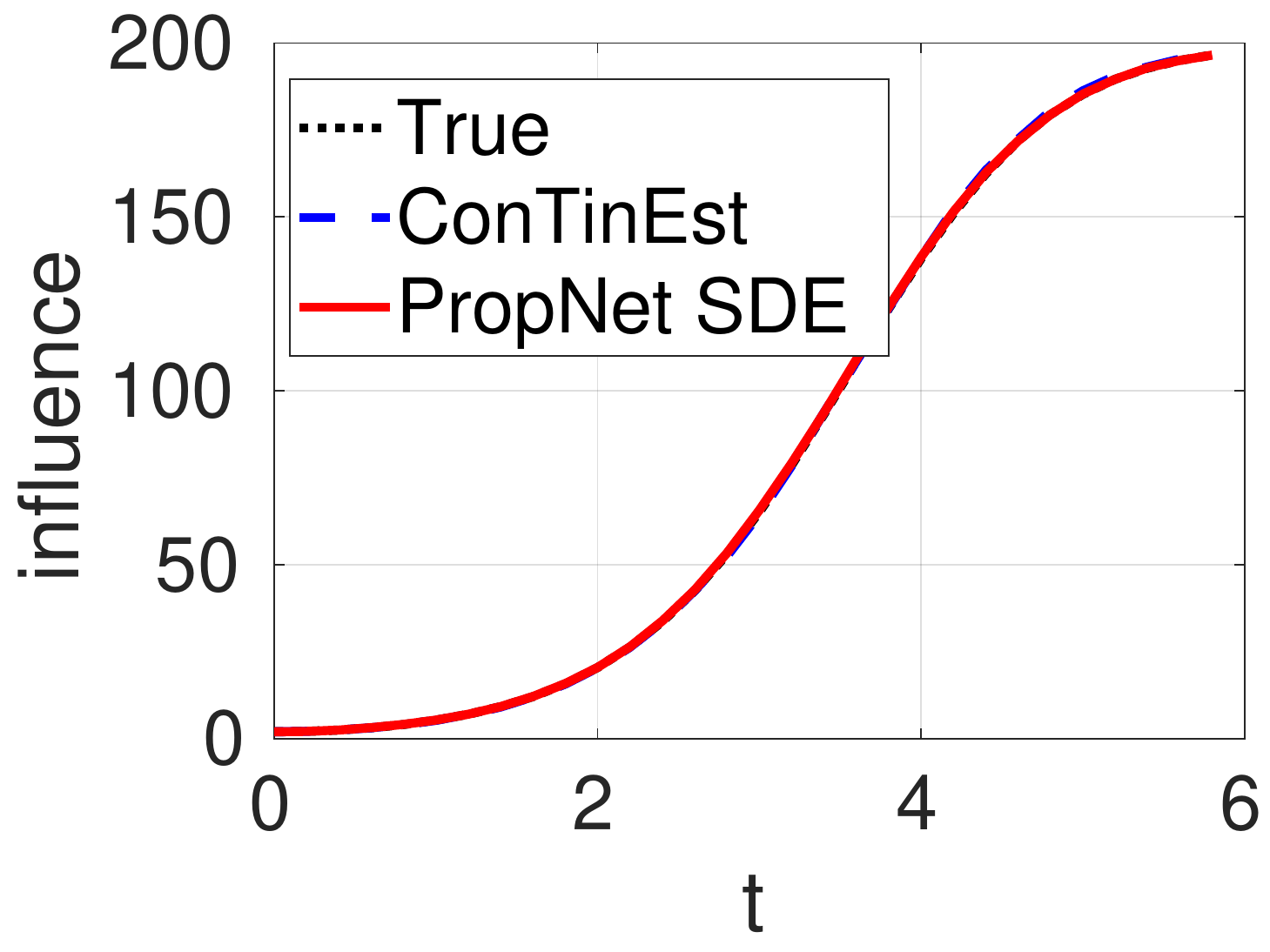}
\includegraphics[width=0.3\textwidth]{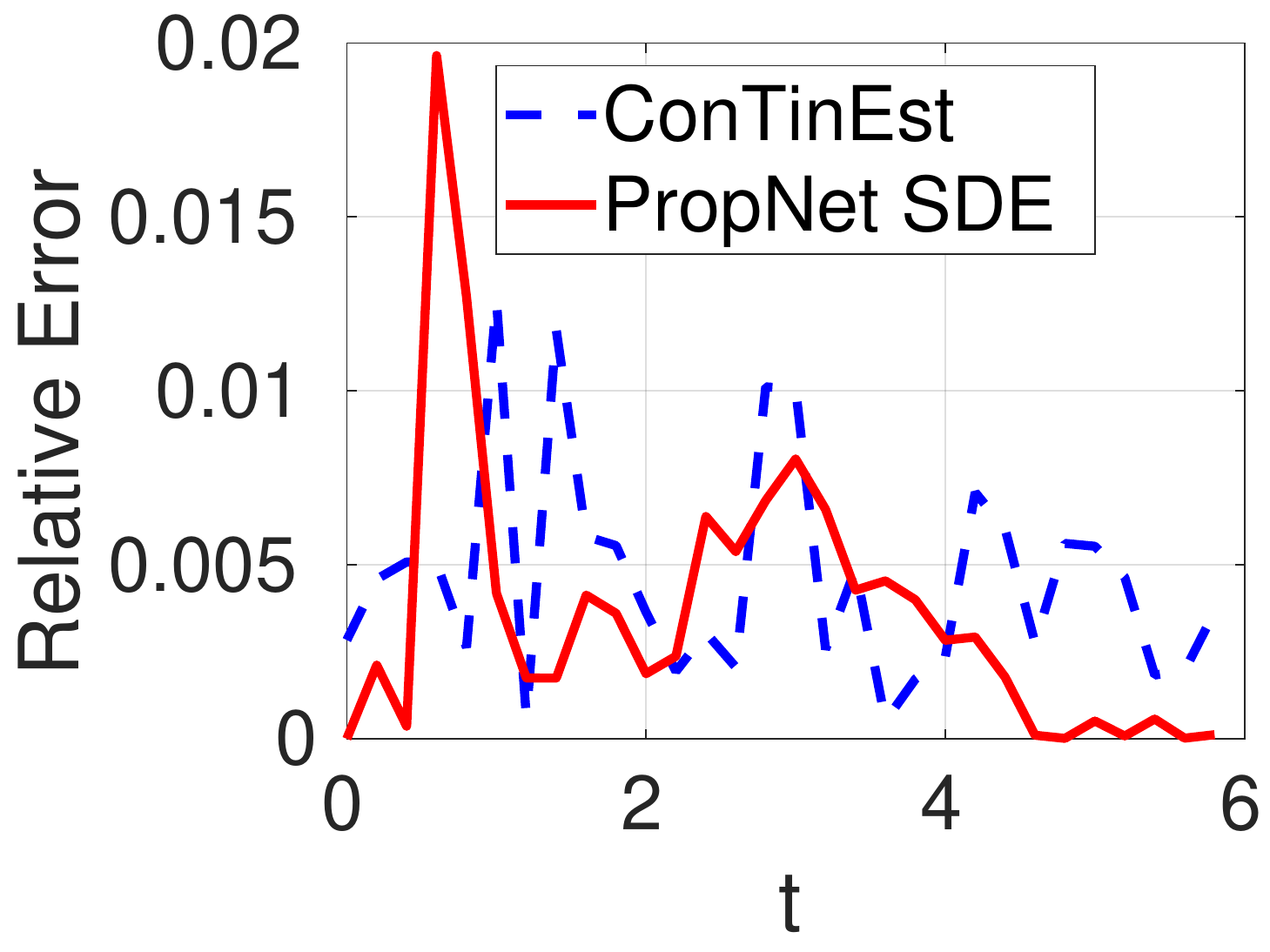}
\includegraphics[width=0.3\textwidth]{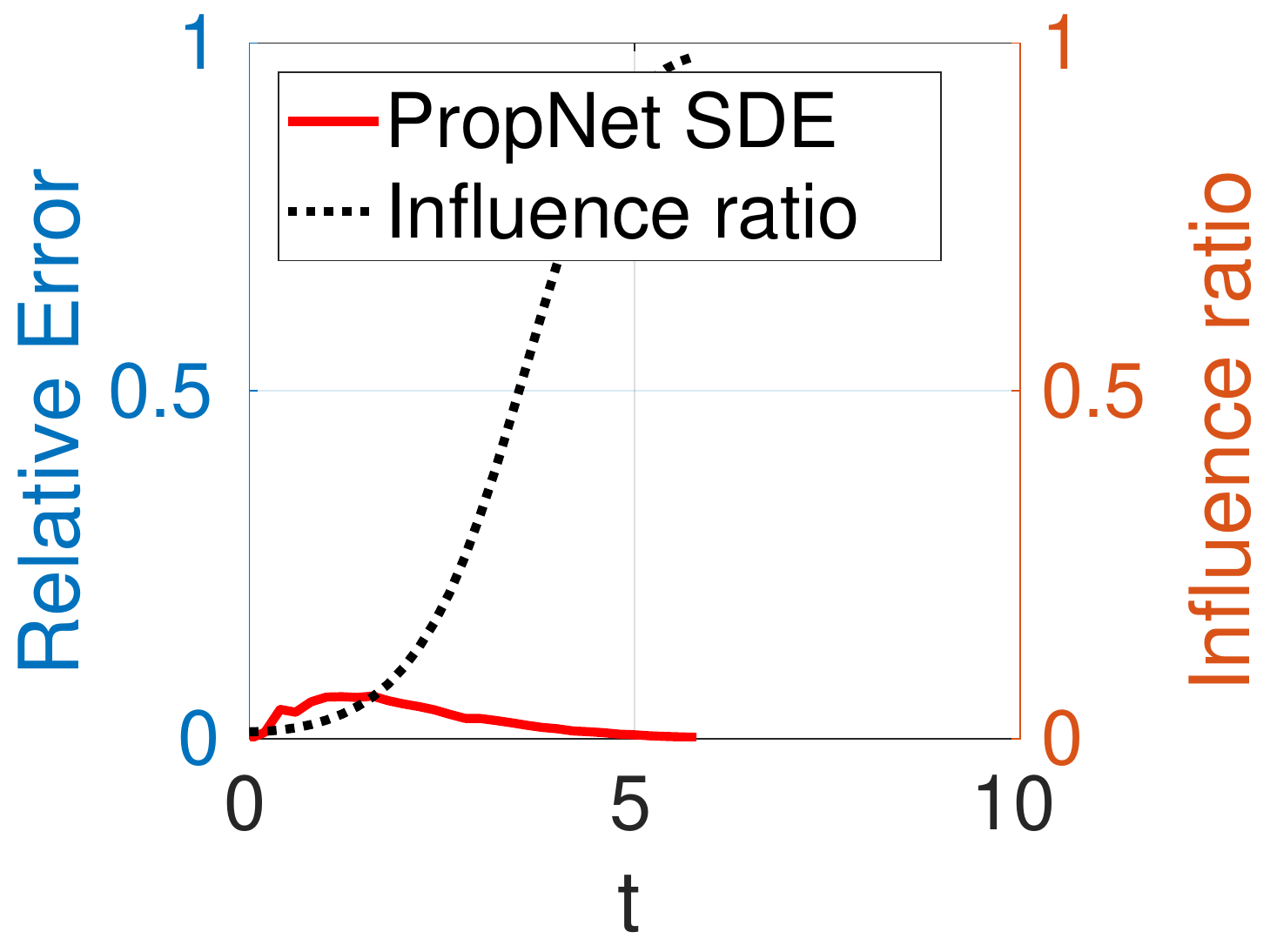}\\
\includegraphics[width=0.3\textwidth]{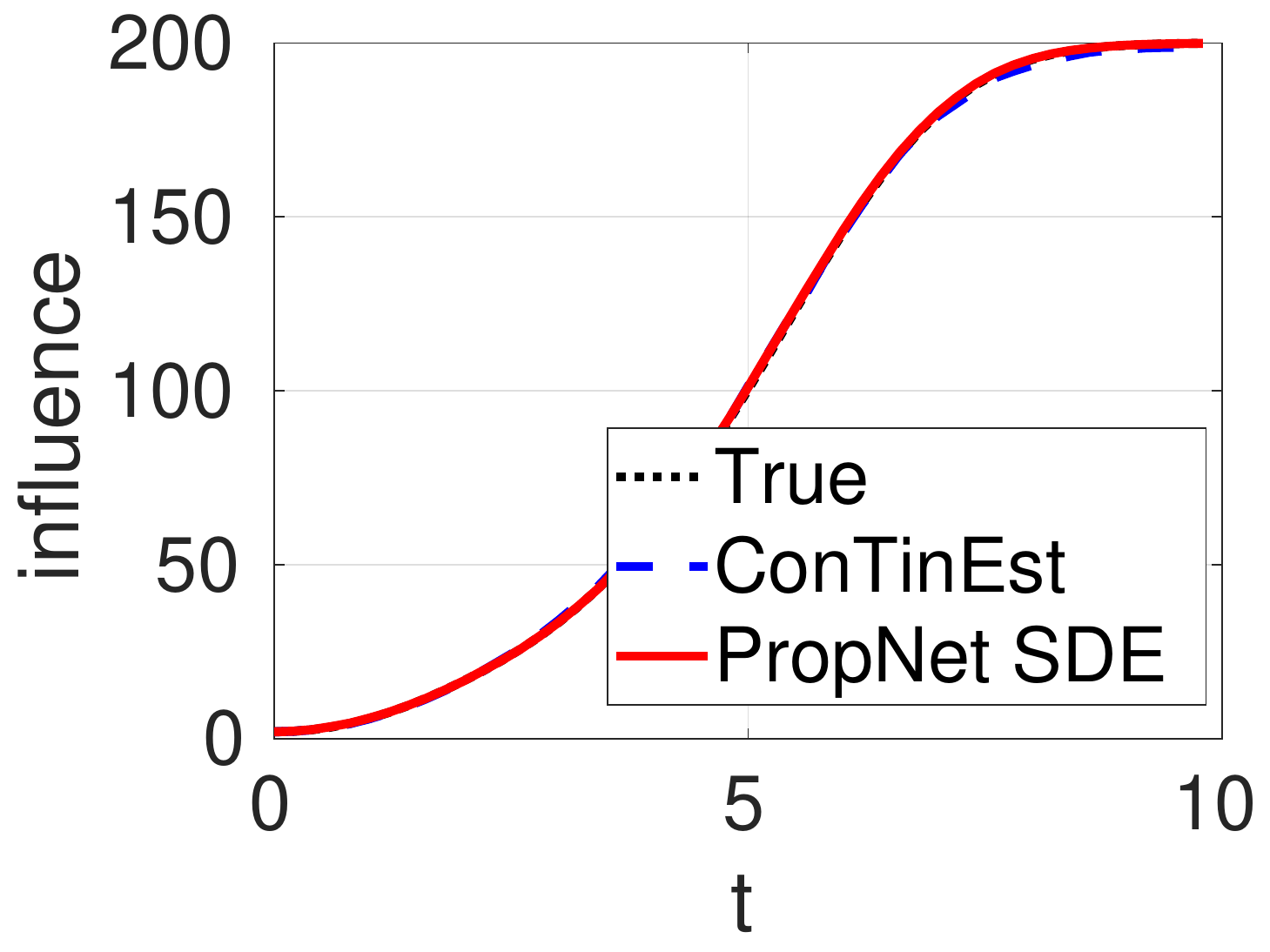}
\includegraphics[width=0.3\textwidth]{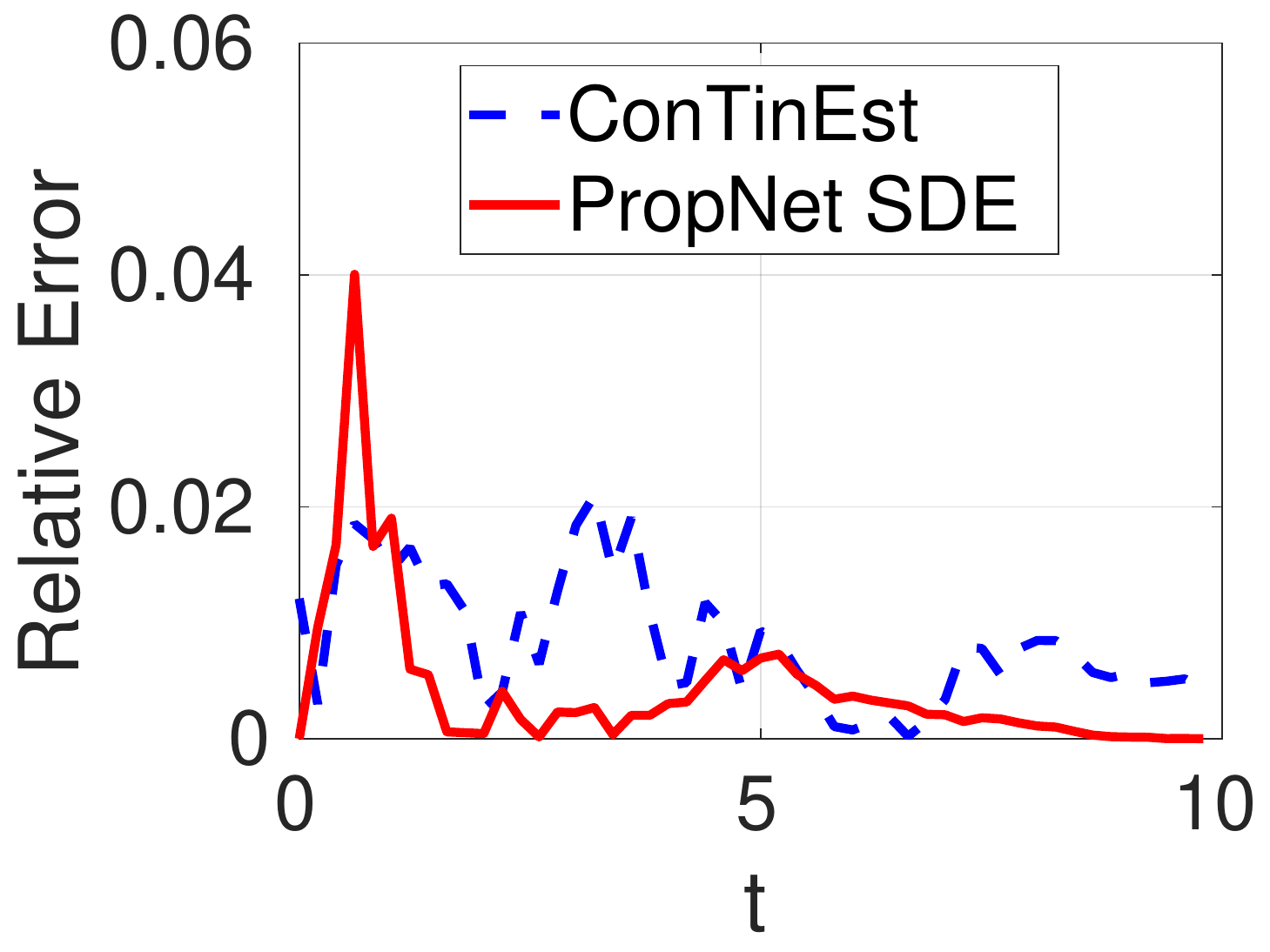}
\includegraphics[width=0.3\textwidth]{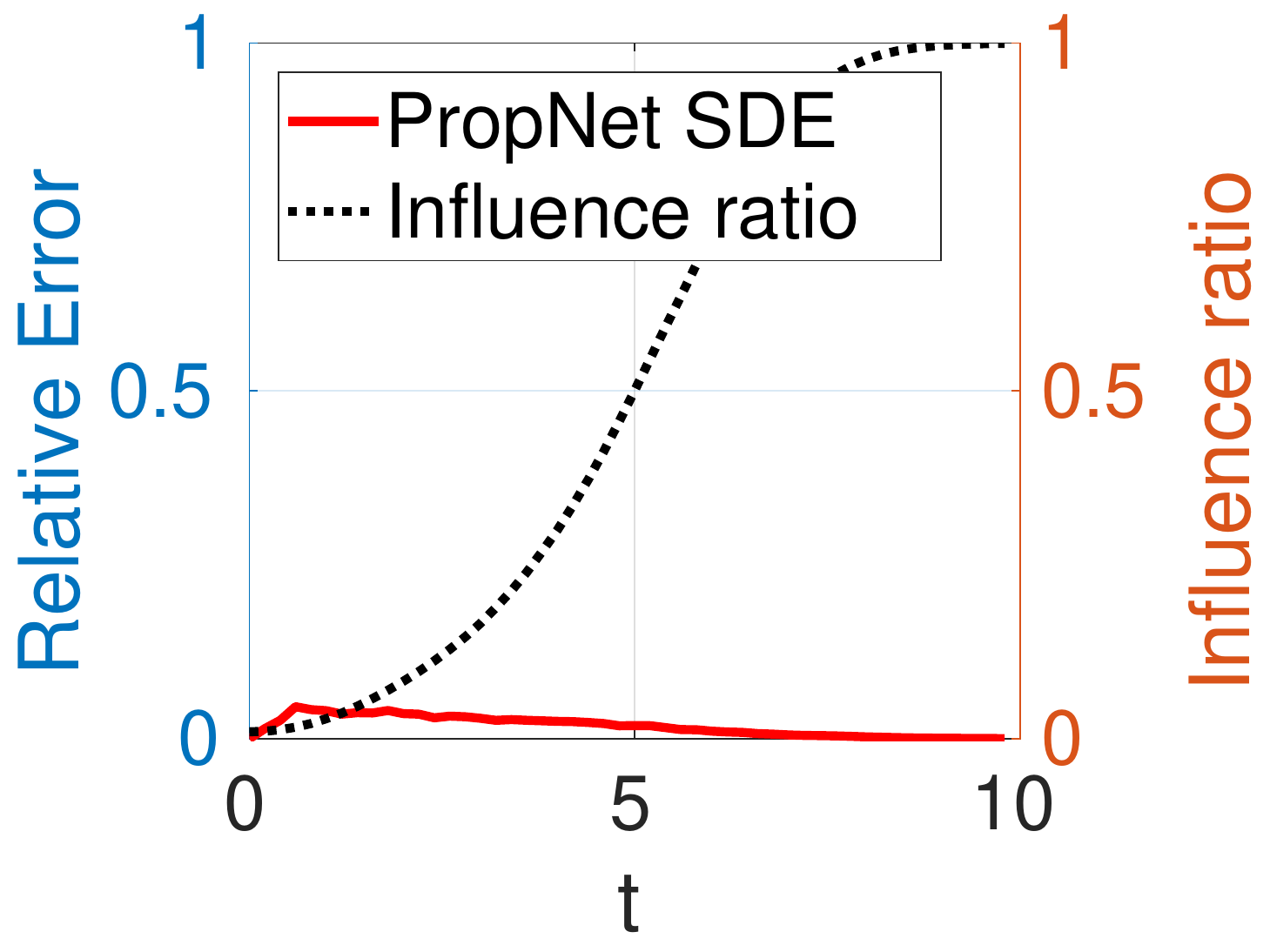}\\
\includegraphics[width=0.3\textwidth]{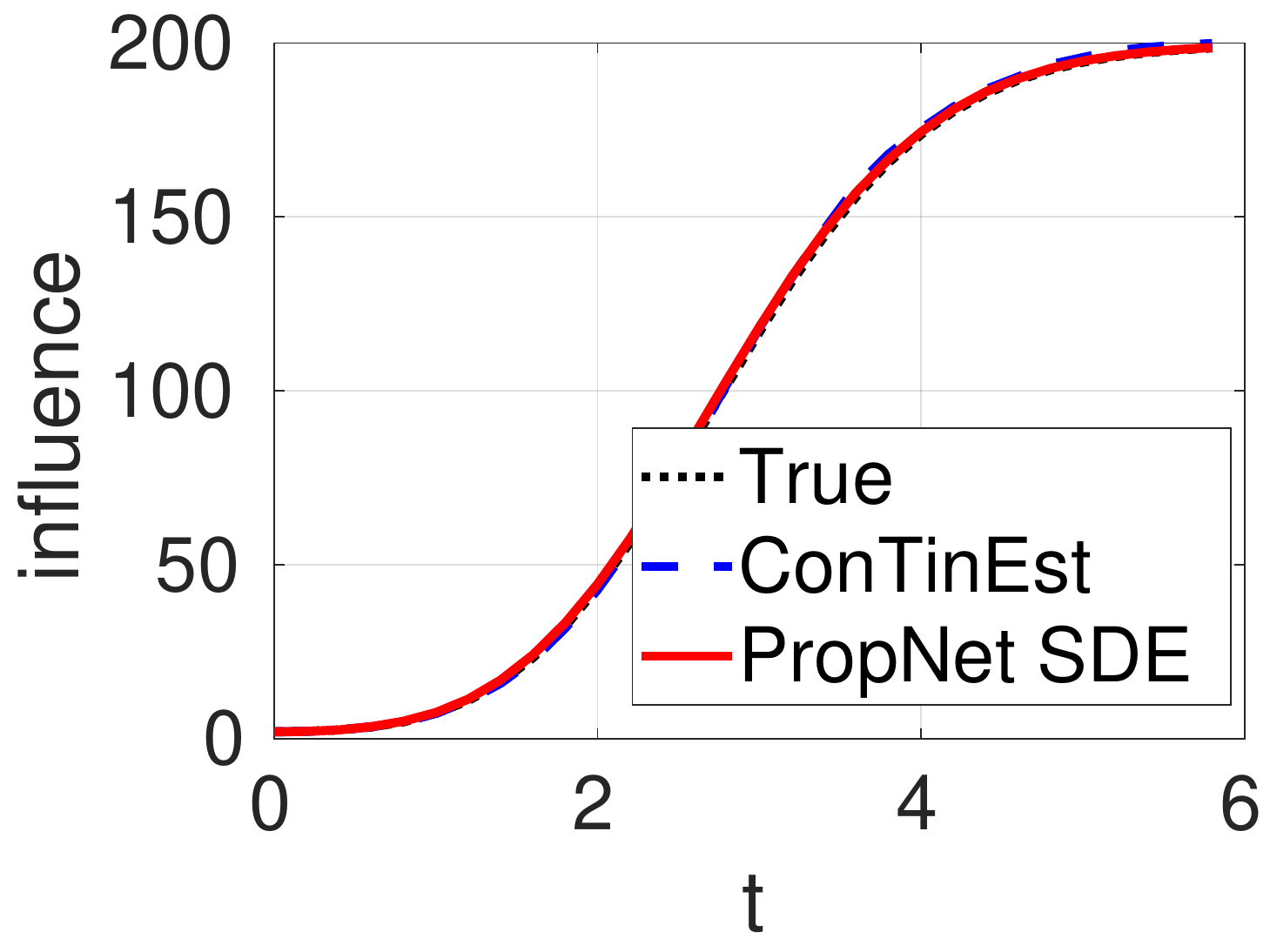}
\includegraphics[width=0.3\textwidth]{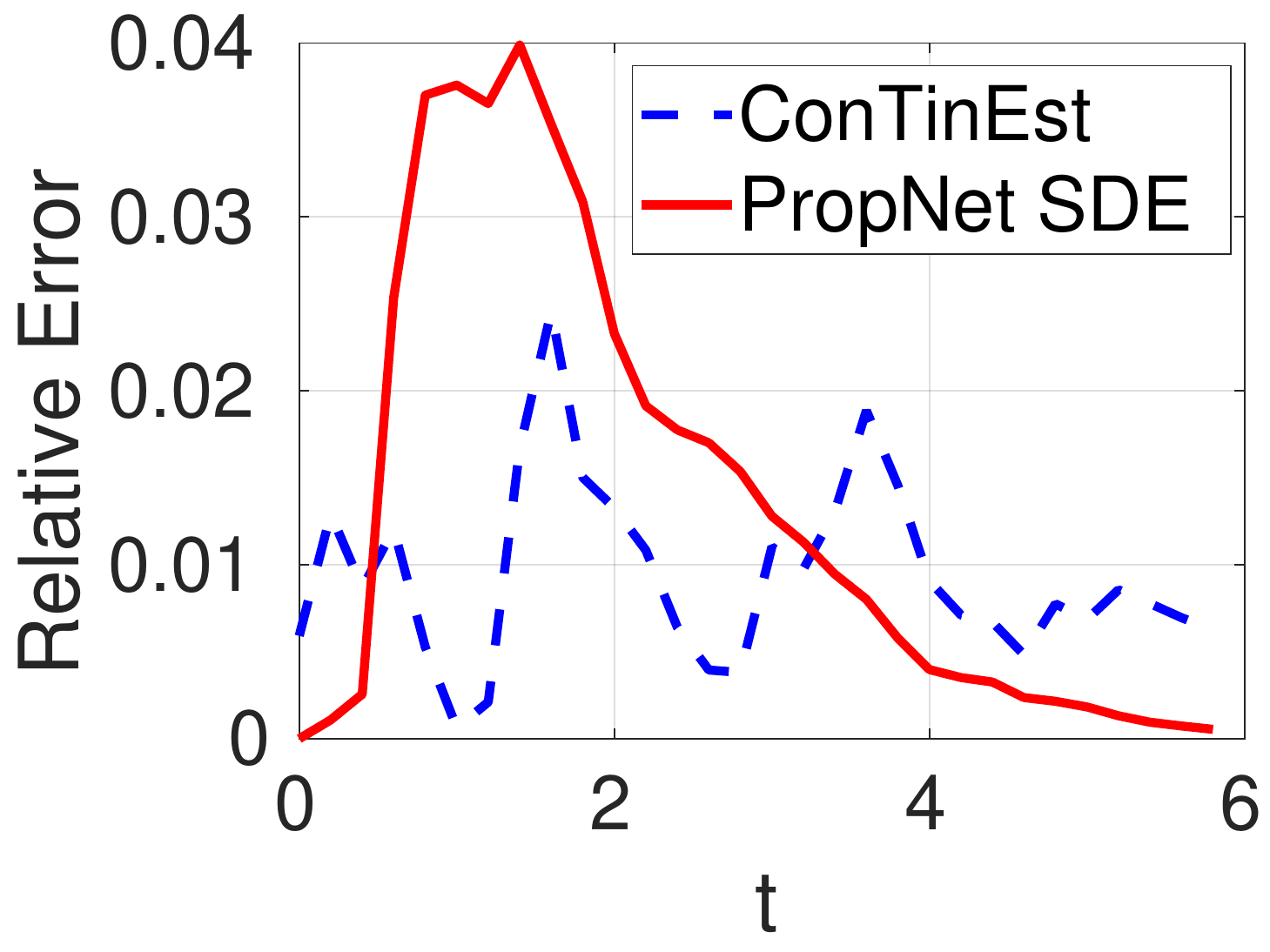}
\includegraphics[width=0.3\textwidth]{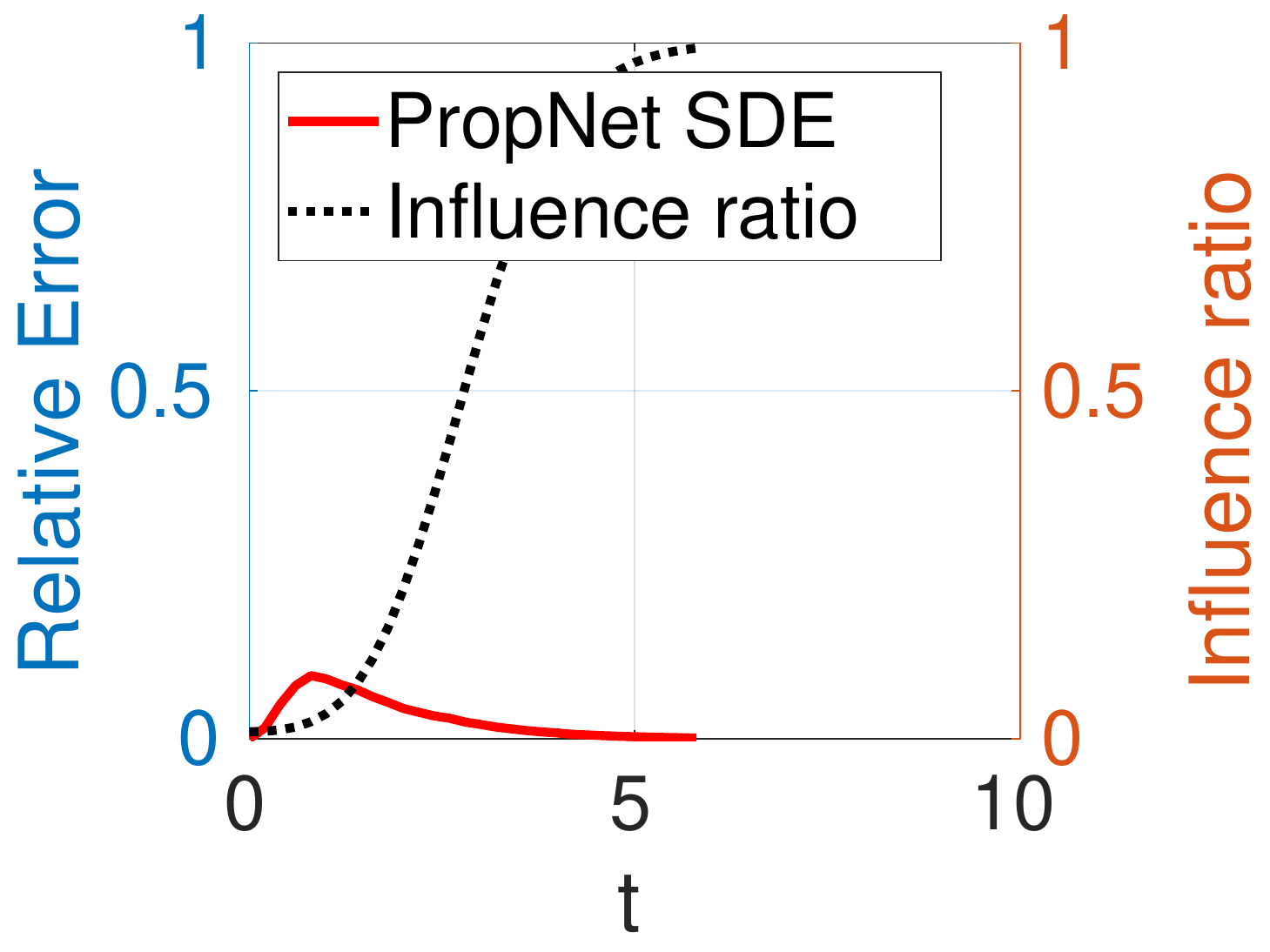}
\caption{Influence prediction by comparison methods on Erd\H{o}s-R\'{e}nyi network (top row), small-world network (middle row), and scale-free network (bottom row), all of size $n=200$, using the propagation model with Rayleigh distributed activation times \underline{without} recovery. 
\textit{Left column}: True total influence $\influ(t)=\sum_i x_i(t)$ and influences $\influhat(t)=\sum_i \xhat_i(x)$ obtained by ConTinEst and PropNet SDE.
\textit{Middle column}: relative error in influence $|\influhat(t)-\influ(t)|/\influ(t)$.
\textit{Right column}: relative error in individual activation probability $\sum_i|\xhat_i(t)-x_i(t)|/\sum_i x_i(t)$ (influence ratio $\influ(t)/n$ is plotted in black dotted line for reference).}
\label{fig:influence_weibull_SI}
\end{figure}
\begin{figure}[t!]
\centering
\includegraphics[width=0.3\textwidth]{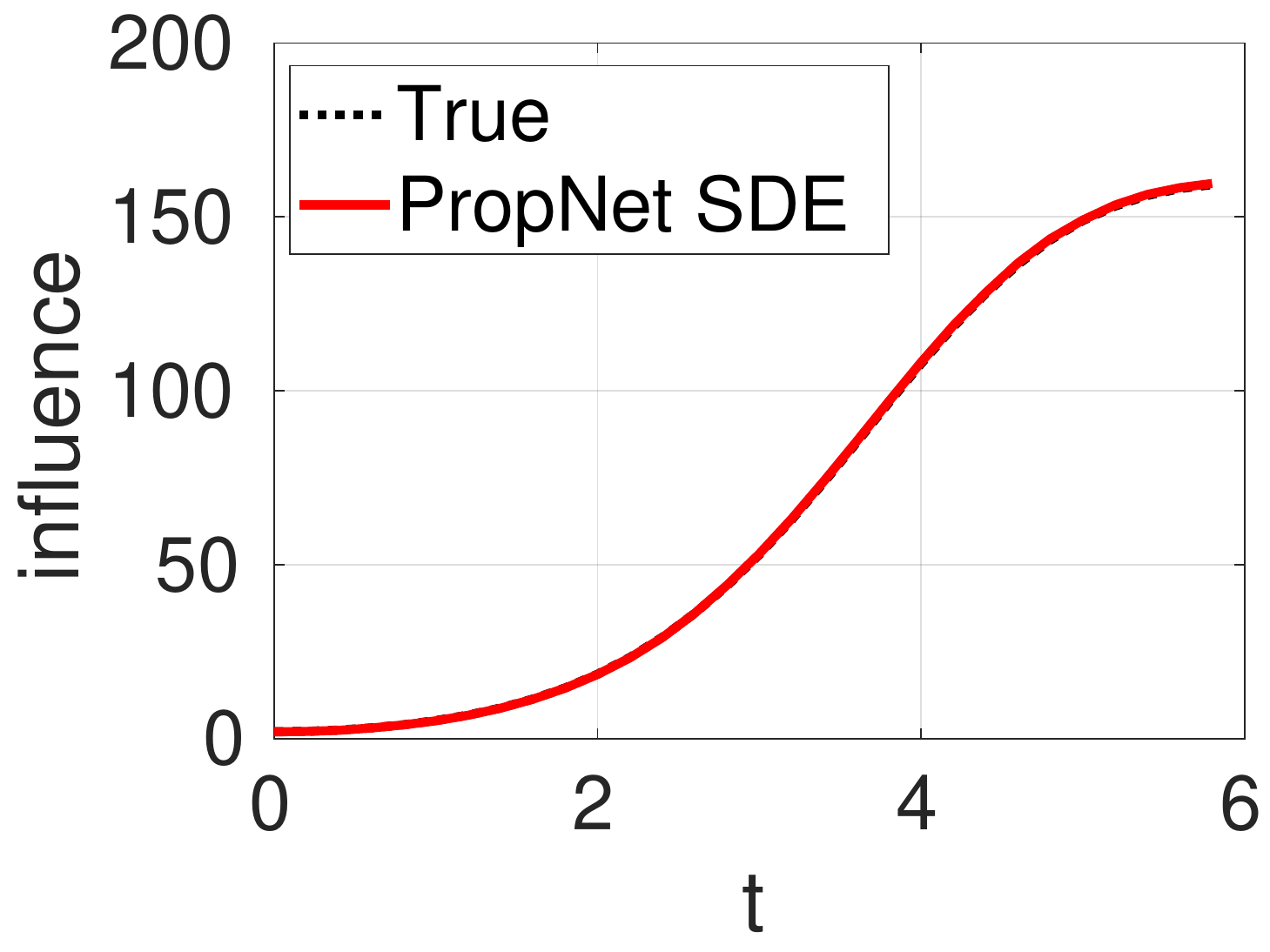}
\includegraphics[width=0.3\textwidth]{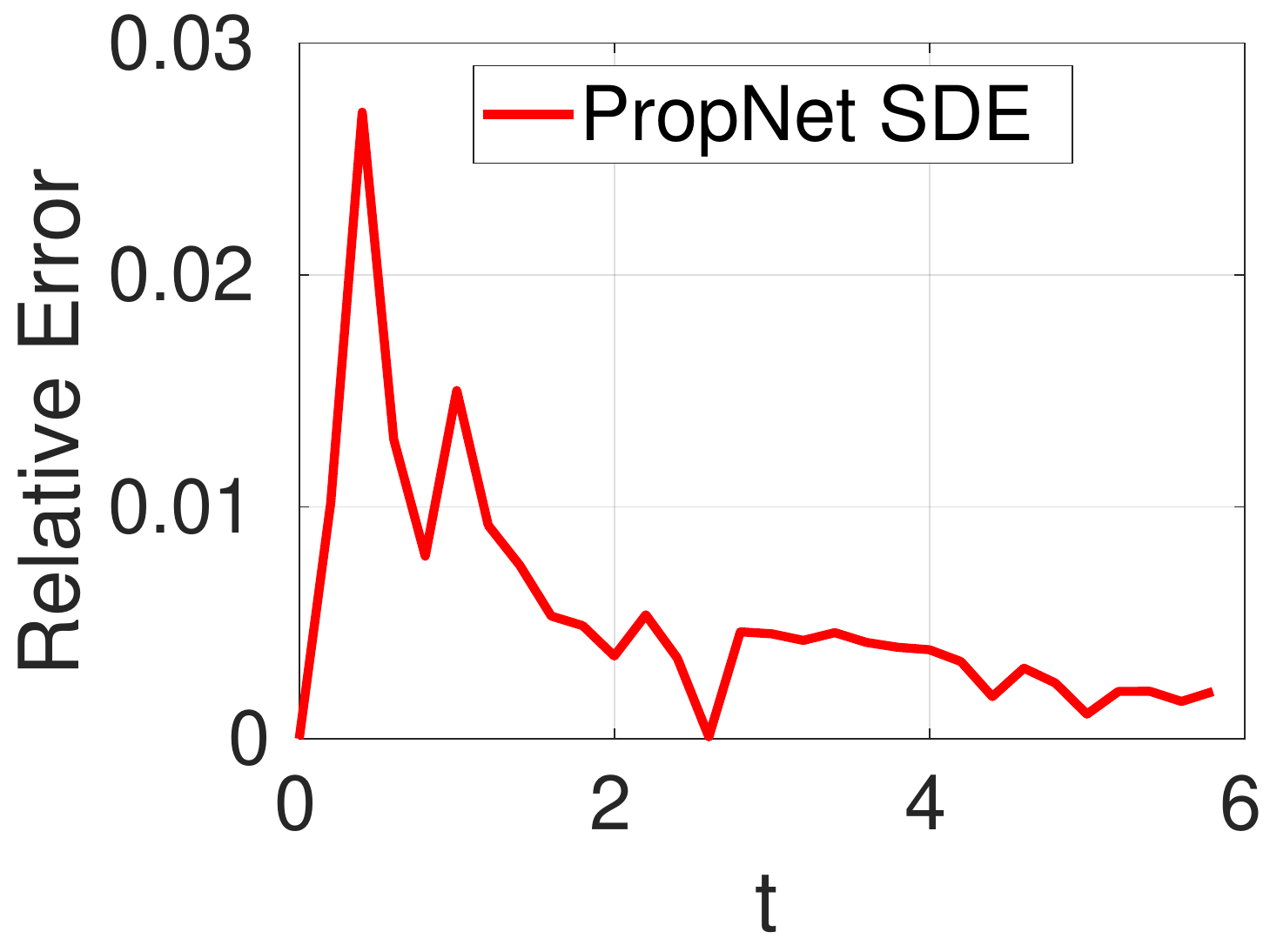}
\includegraphics[width=0.3\textwidth]{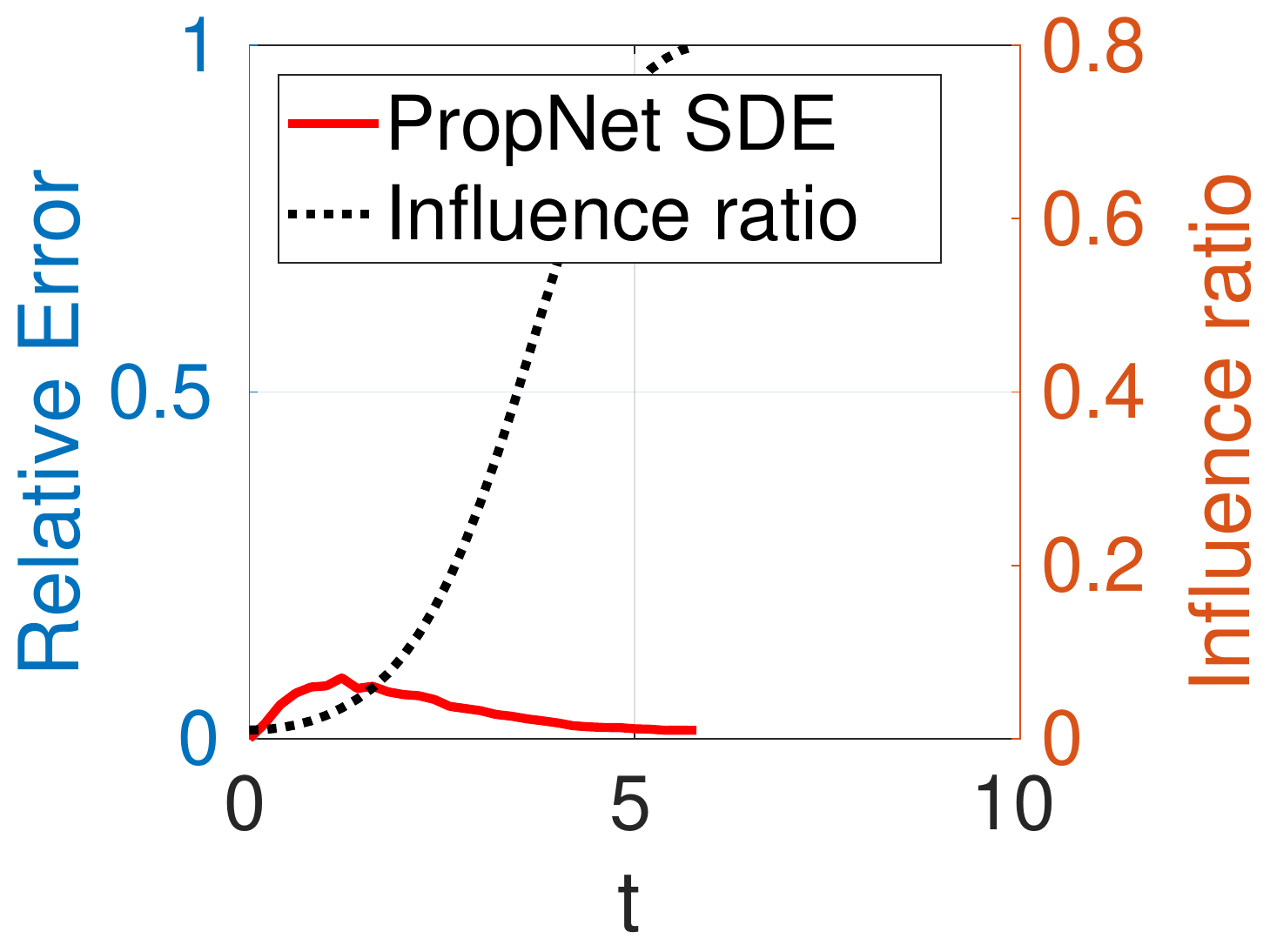}\\
\includegraphics[width=0.3\textwidth]{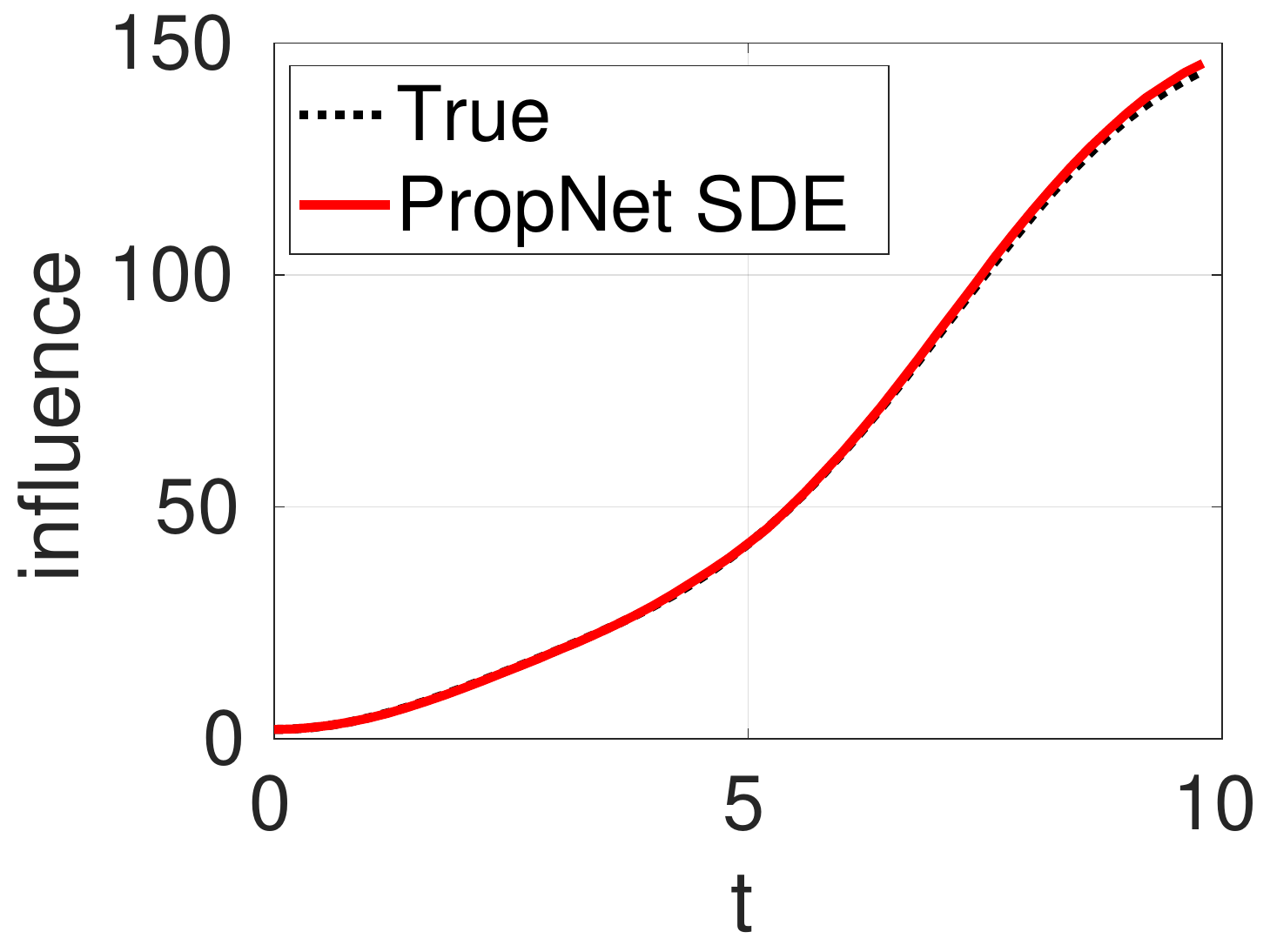}
\includegraphics[width=0.3\textwidth]{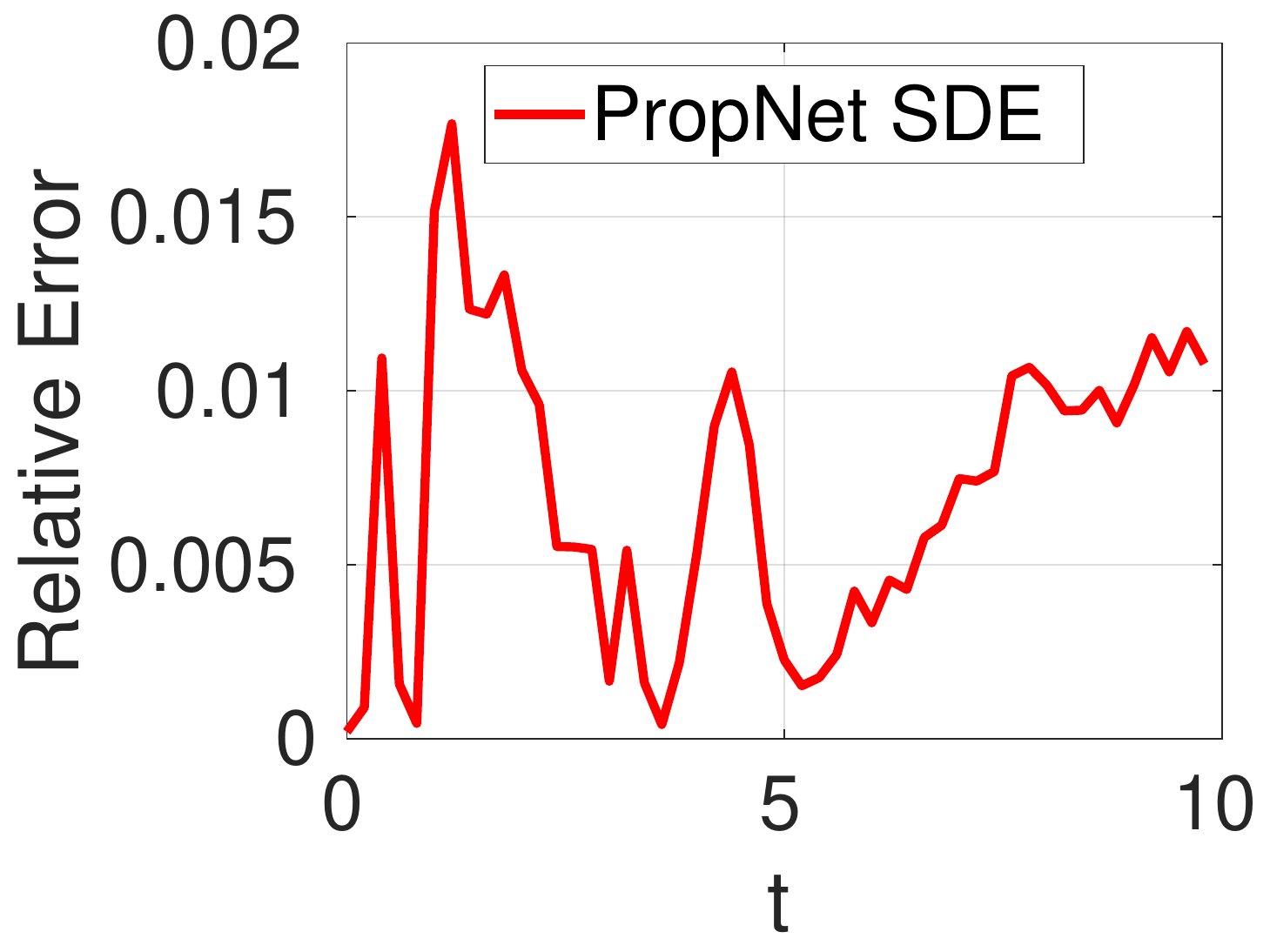}
\includegraphics[width=0.3\textwidth]{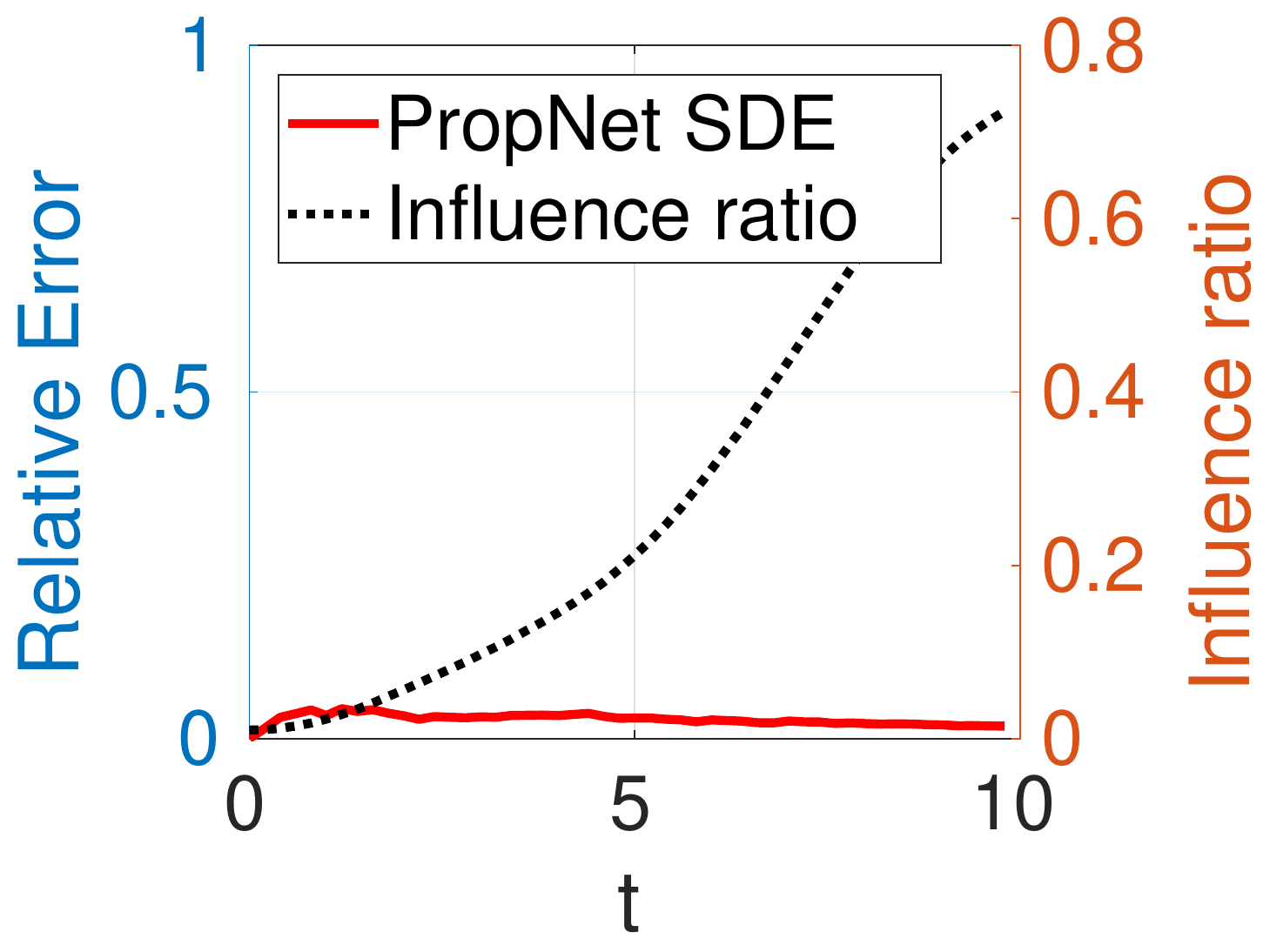}\\\includegraphics[width=0.3\textwidth]{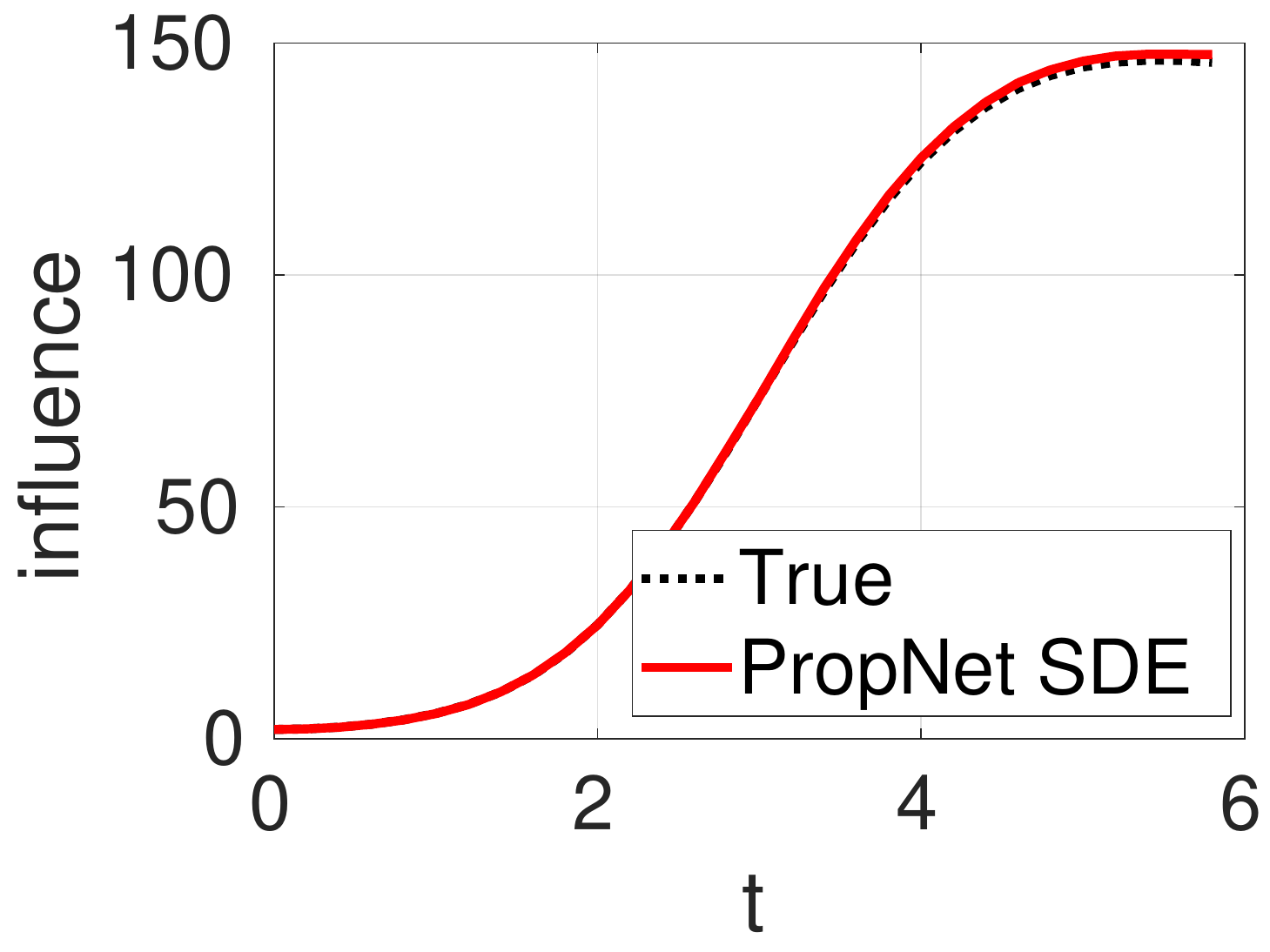}
\includegraphics[width=0.3\textwidth]{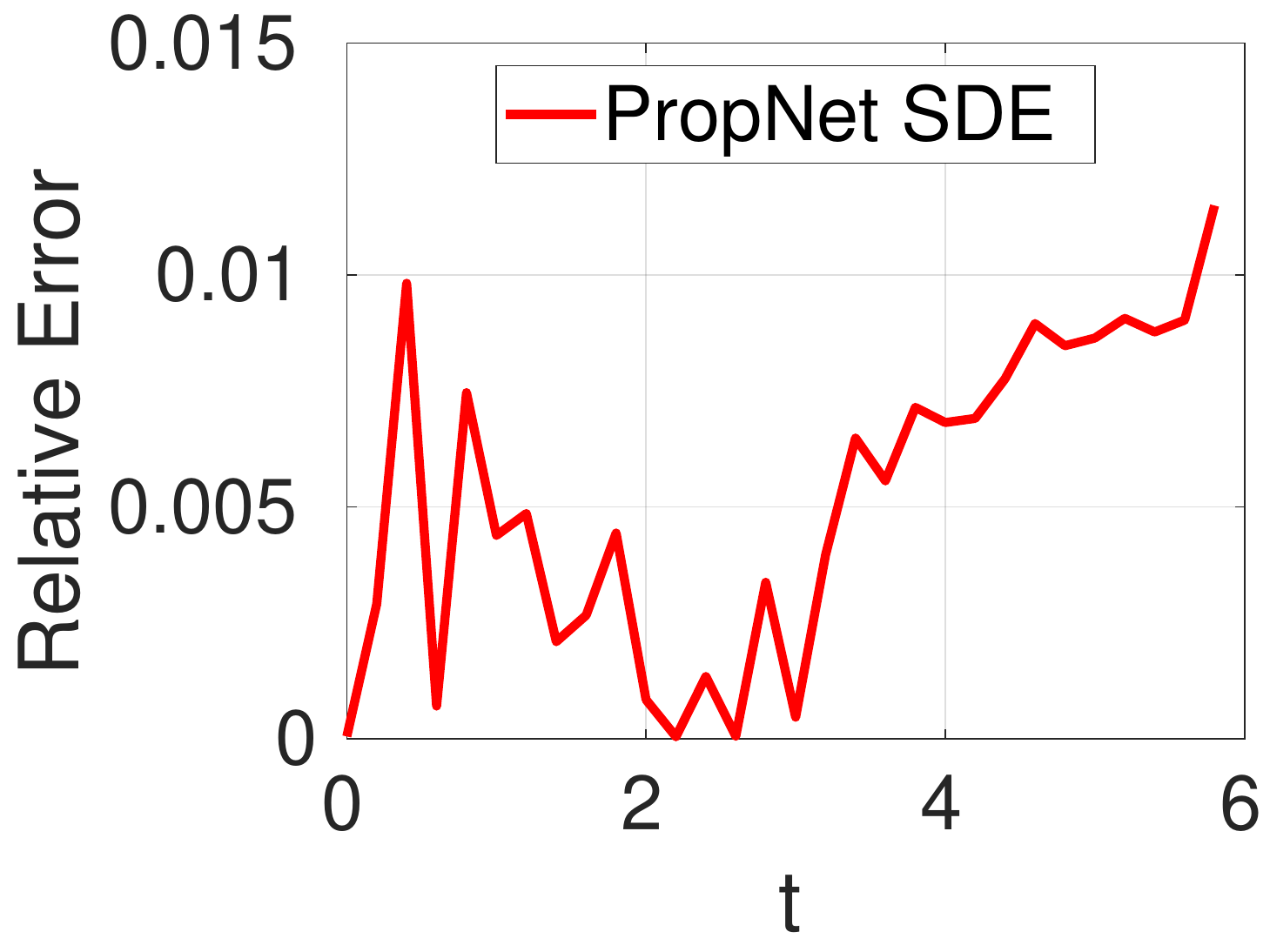}
\includegraphics[width=0.3\textwidth]{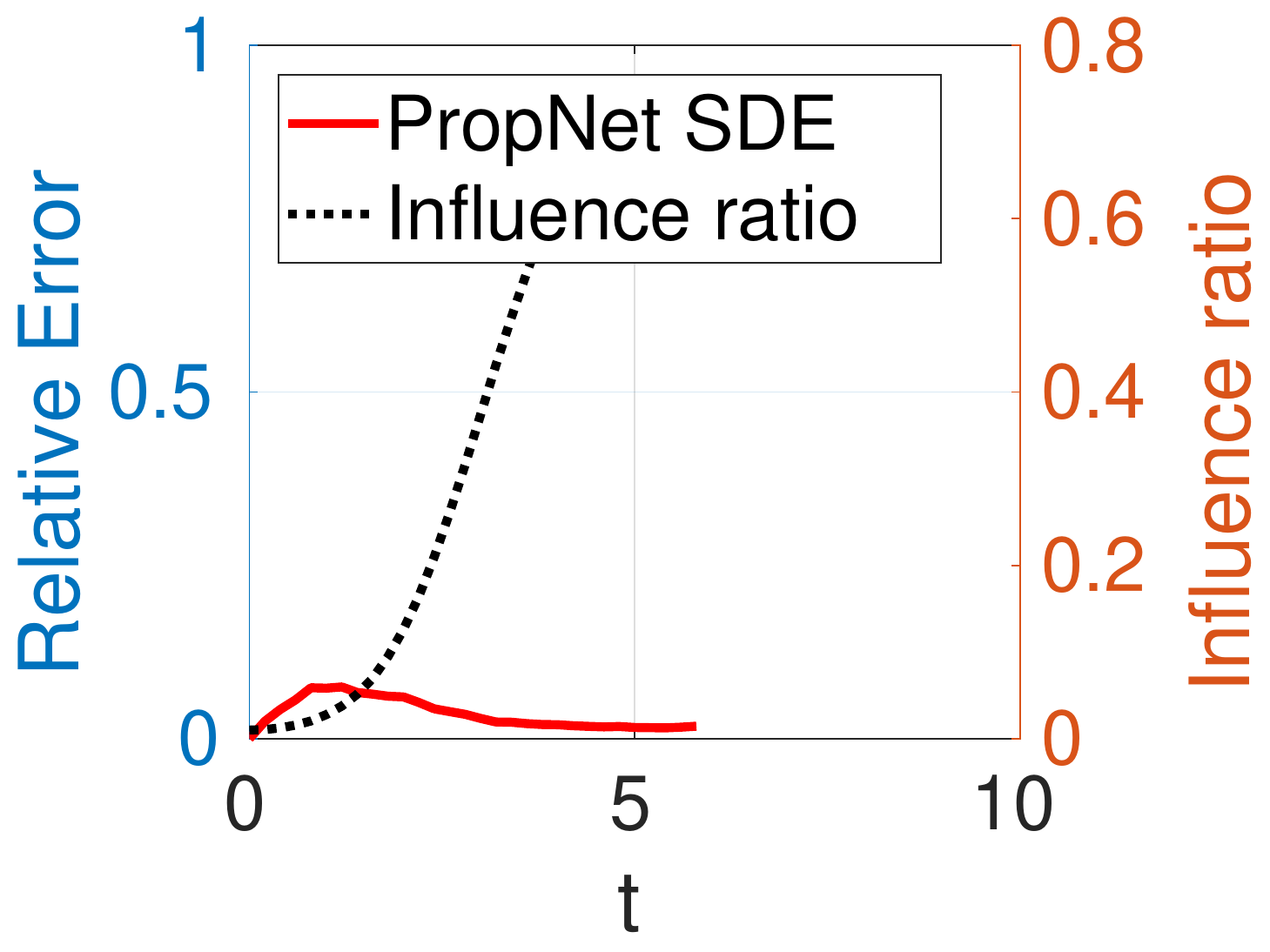}
\caption{Influence prediction by comparison methods on Erd\H{o}s-R\'{e}nyi network (top row), small-world network (middle row), and scale-free network (bottom row), all of size $n=200$, using the propagation model with Rayleigh distributed activation times \underline{with} recovery. 
\textit{Left column}: True total influence $\influ(t)=\sum_i x_i(t)$ and influences $\influhat(t)=\sum_i \xhat_i(x)$ obtained by ConTinEst and PropNet SDE.
\textit{Middle column}: relative error in influence $|\influhat(t)-\influ(t)|/\influ(t)$.
\textit{Right column}: relative error in individual activation probability $\sum_i|\xhat_i(t)-x_i(t)|/\sum_i x_i(t)$ (influence ratio $\influ(t)/n$ is plotted in black dotted line for reference).}
\label{fig:influence_weibull_SIS}
\end{figure}

The proposed PropNet SDE algorithm benefits from the variance reduction (VR) technique, which significantly reduces the number of samplings to achieve the same level of accuracy.
To demonstrate the effectiveness of variance reduction, we run PropNet SDE with standard sampling of Poisson numbers and VR (i.e., Algorithm \ref{alg:jsde}) using Erd\H{o}s-R\'{e}nyi network.
The parameter setting of Erd\H{o}s-R\'{e}nyi network is the same as above, but we run the algorithms with 5, 10, 20, 40, 80 and 100 sampled cascades to track the prediction errors. 
We plot both prediction errors with 95\% confidence intervals in Figure \ref{fig:vr}.
From the plots in Figure \ref{fig:vr}, we can see that PropNet SDE with VR produces lower prediction error than that without VR for the same amount of samplings.
This suggests that VR is a simple and effective implementation in the proposed PropNet SDE method to improve computational efficiency.
\begin{figure}[t!]
\centering
\includegraphics[width=0.35\textwidth]{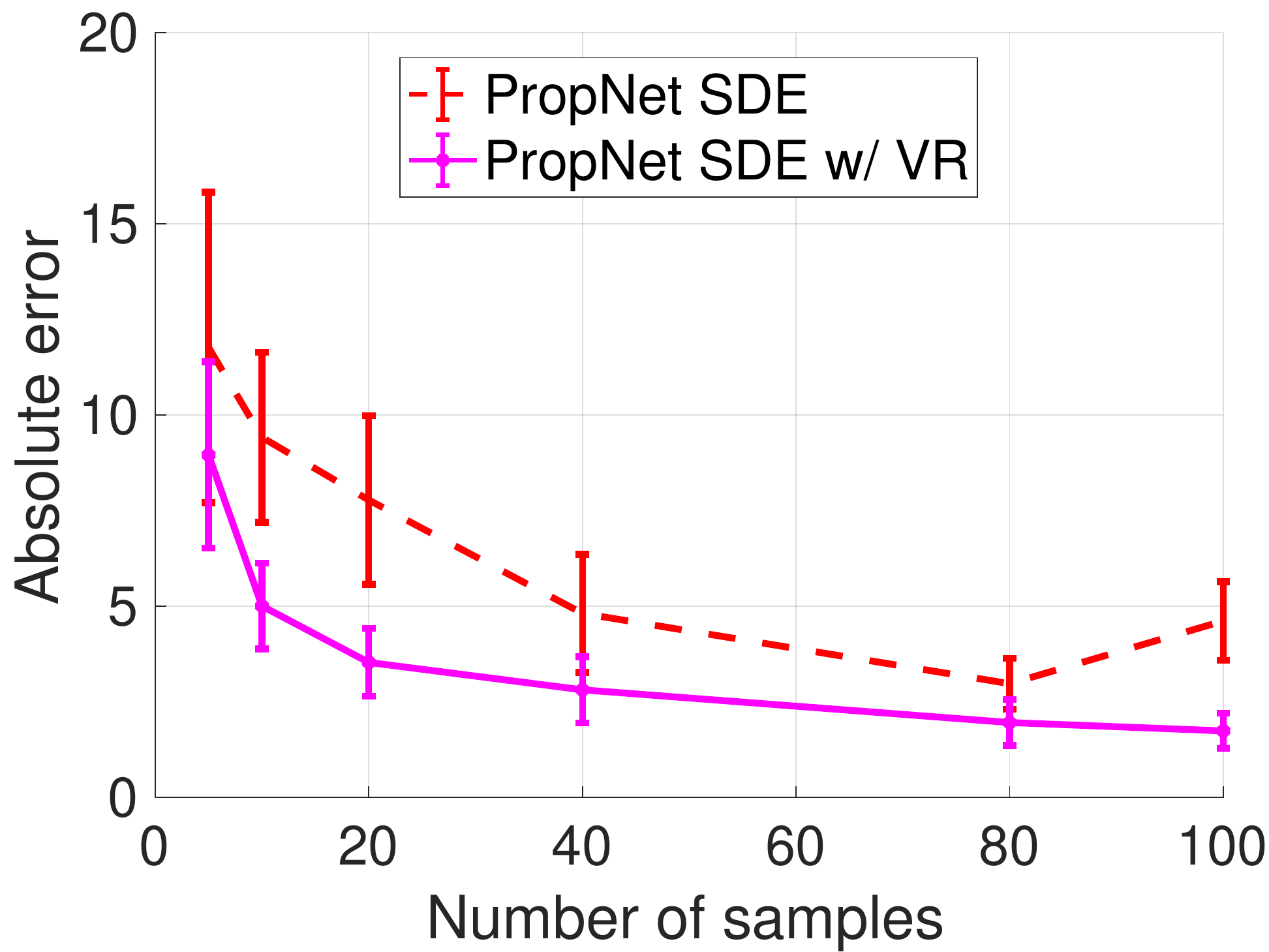}
\includegraphics[width=0.35\textwidth]{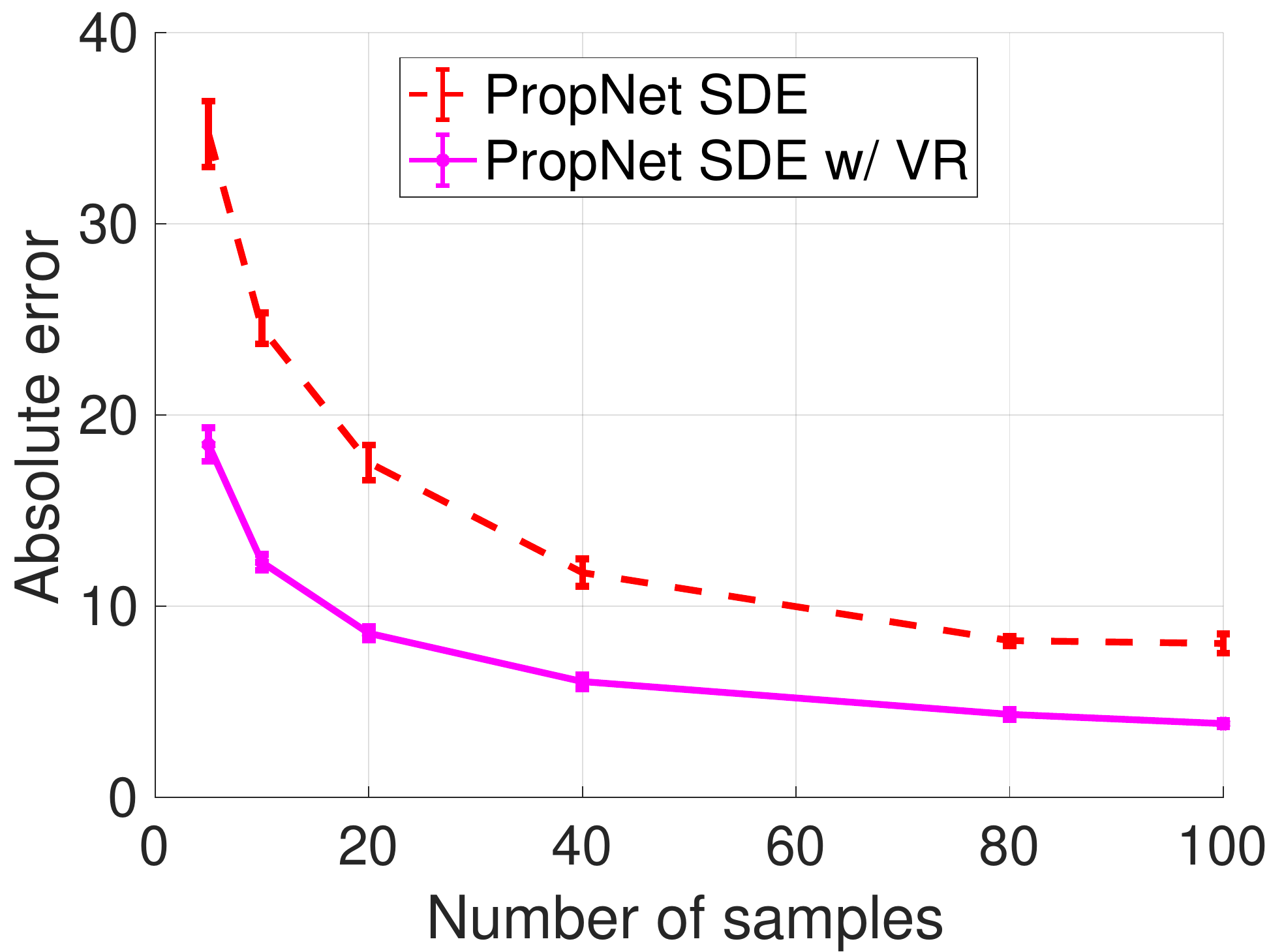}
\caption{Comparison of prediction errors in influence (left) and individual activation probability (right) versus the number of sampled cascades $L$ using the PropNet SDE without and with variance reduction (VR) for the Erd\H{o}s-R\'{e}nyi's network in the first experiment.}
\label{fig:vr}
\end{figure}

\subsection{Experiments on real data}
We also test the proposed influence prediction algorithm on the Weibo-Net-Tweet dataset\footnote{https://cn.aminer.org/data-sna\#Weibo-Net-Tweet}. The Weibo-Net-Tweet dataset contains 1.7 million users and 400 million edges (followee-follower relationships). 
This dataset also contains 300,000 popular microblog diffusion episodes (propagation cascades) posted in 2012. 
Each diffusion episode consists of the original microblog and its retweets.
We select the most influential 434 users and retrieve all the propagation cascades that contain at least 50 of these users.
Then we randomly select 60\% of these cascades, and apply the NetRate algorithm \cite{Gomez-Rodriguez:2012b} to learn the activation rate matrix $A$.
With the learned $A$, we apply the comparison algorithms to the source sets of the remaining 40\% cascades to predict influence, and compute the error of the predicted influence to the true influence in the first 24 hours exhibited by these cascades.
The results are shown in Figure \ref{fig:real}.
From these plots, we observe Mean Field generates severely large errors than ConTinEst and PropNet SDE, where the latter two have comparable accuracy in influence prediction, as shown in the left panel of Figure \ref{fig:real}.
In addition, PropNet SDE also accurately predicted the activation probabilities of individual users as shown in the right panel of Figure \ref{fig:real}.
\begin{figure}[t!]
\centering
\includegraphics[width=0.35\textwidth]{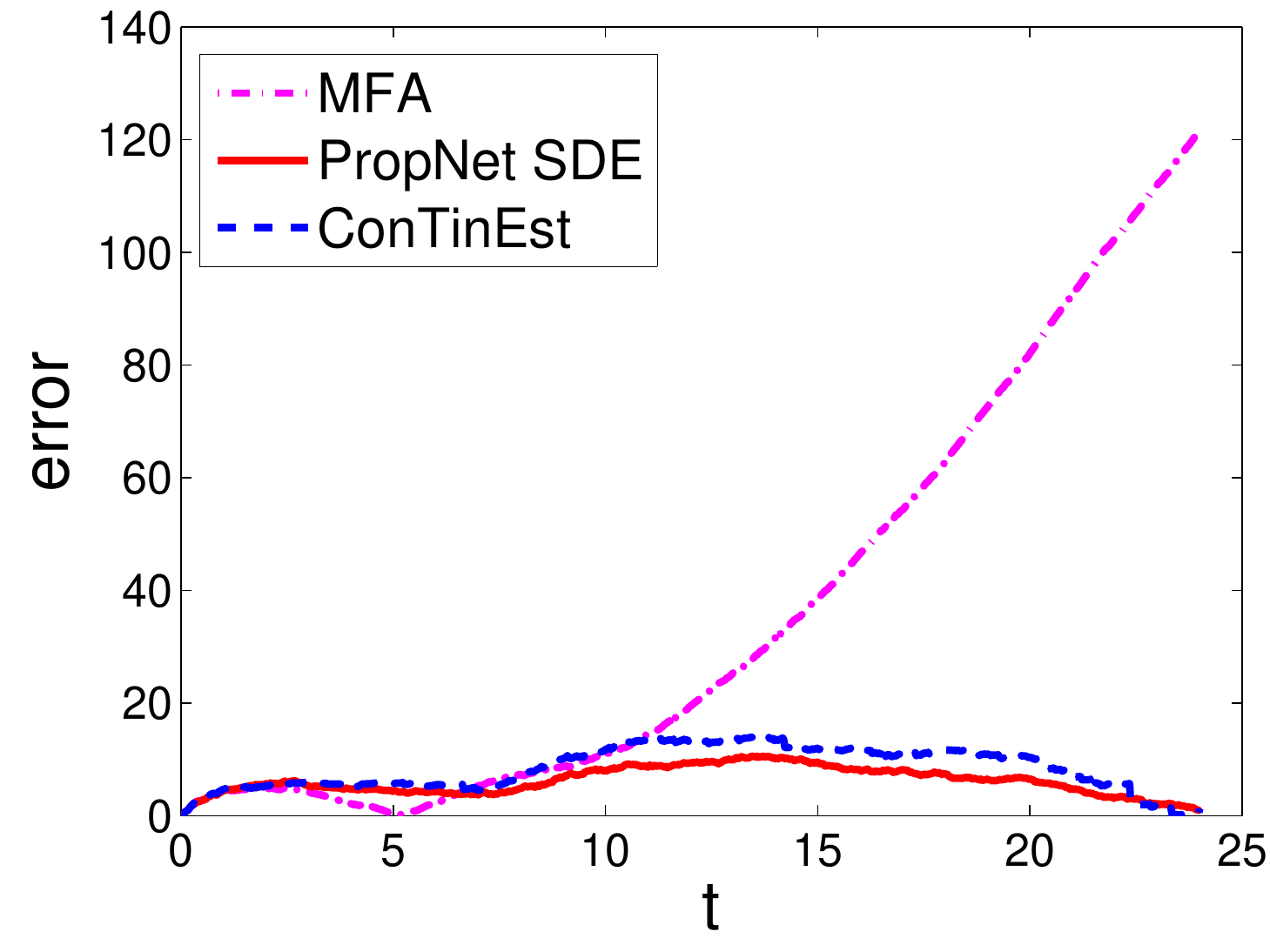}
\includegraphics[width=0.35\textwidth]{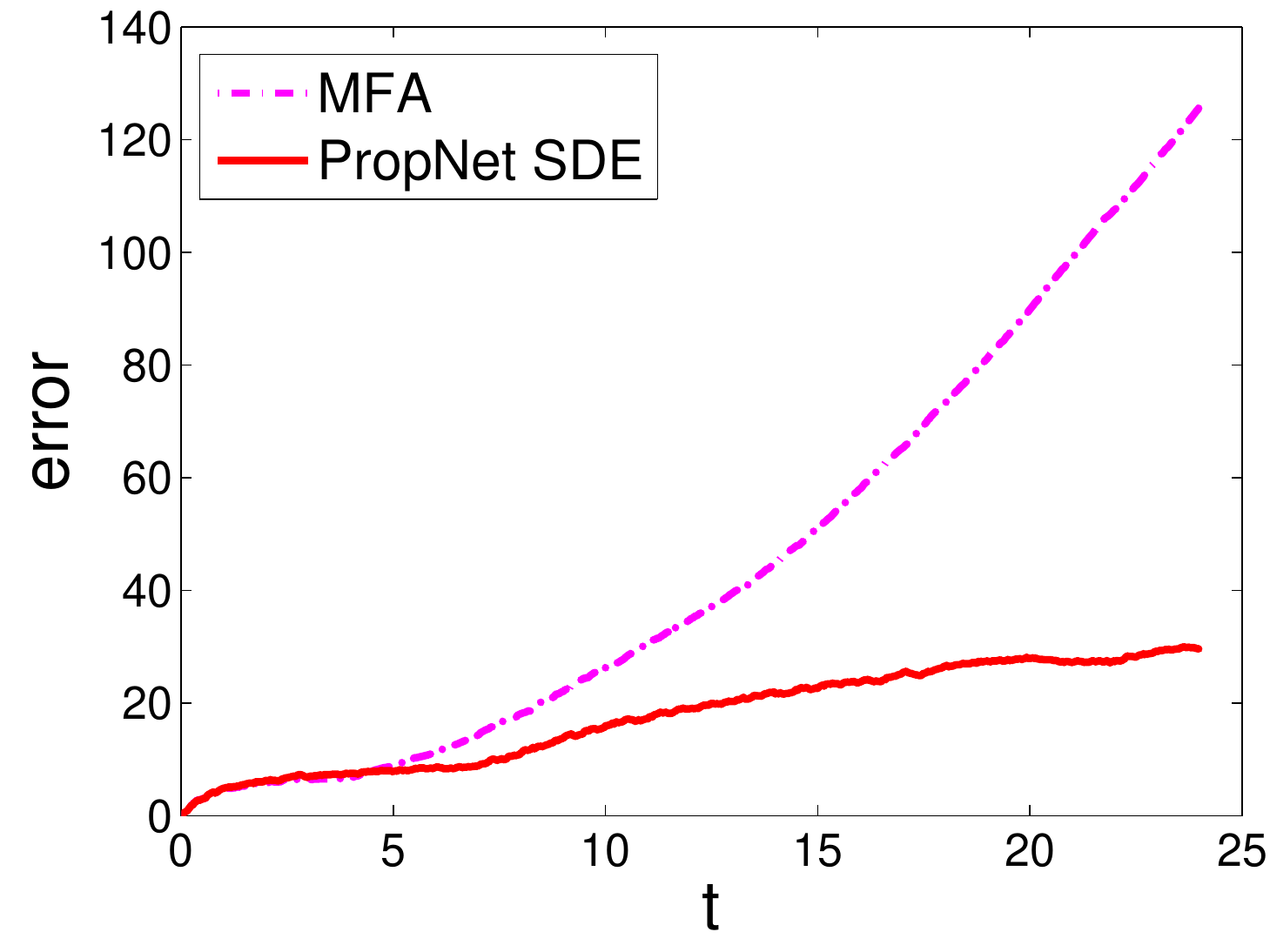}
\caption{Average error of predicted influence $|\influhat(t)-\influ(t)|$ (left) and individual activation probability $\sum_i|\xhat_i(t)-x_i(t)|$ (right) versus time (in hours) on the Weibo-Net-Tweet dataset for the first 24 hours of propagations.}
\label{fig:real}
\end{figure}

We also instantiate one prediction result of individual node activation probabilities starting from a single source node with index $2$ (actual user ID is concealed, same below) using the proposed PropNet method. 
The result is shown in the left panel of Figure \ref{fig:realnet}. 
The color of a node shows the probability that the node is activated at $T=24$ hours, where the color bar is plotted on the side for reference.
For comparison, we also extract the empirical probabilities from the cascades in testing dataset, and show the result on the right panel of Figure \ref{fig:realnet}.
For cleaner appearance, we removed all nodes that have lower than 50\% activation probability in both results.
Using the prediction result of the PropNet SDE, we can see that nodes indexed by 4, 7, 8, 18, 20, 23, 29 have relatively high probabilities to be activated (retweet the post), in addition to the active direct followers (e.g., nodes 6, 14, 15, 42) of node $2$.
On the other hand, some direct followers, such as node 257, of node $2$ do not often help to spread the post.
These claims are supported by the empirical results in the testing data, as shown on the right of Figure \ref{fig:realnet}, and they are also backed up by the small quantitative predication error we showed in the right panel of Figure \ref{fig:real}.

\begin{figure}[t!]
\centering
\includegraphics[width=0.4\textwidth]{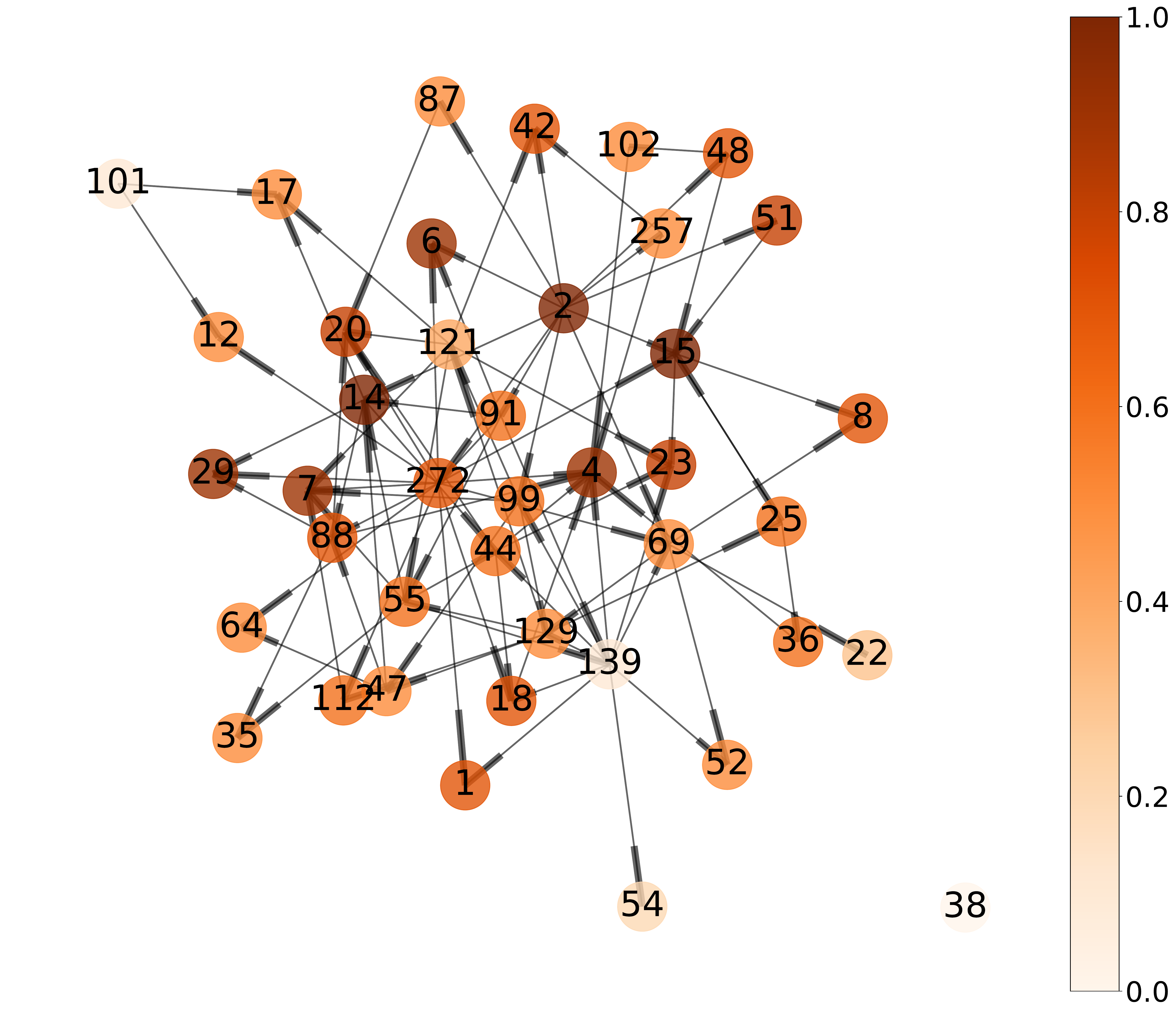}
\includegraphics[width=0.4\textwidth]{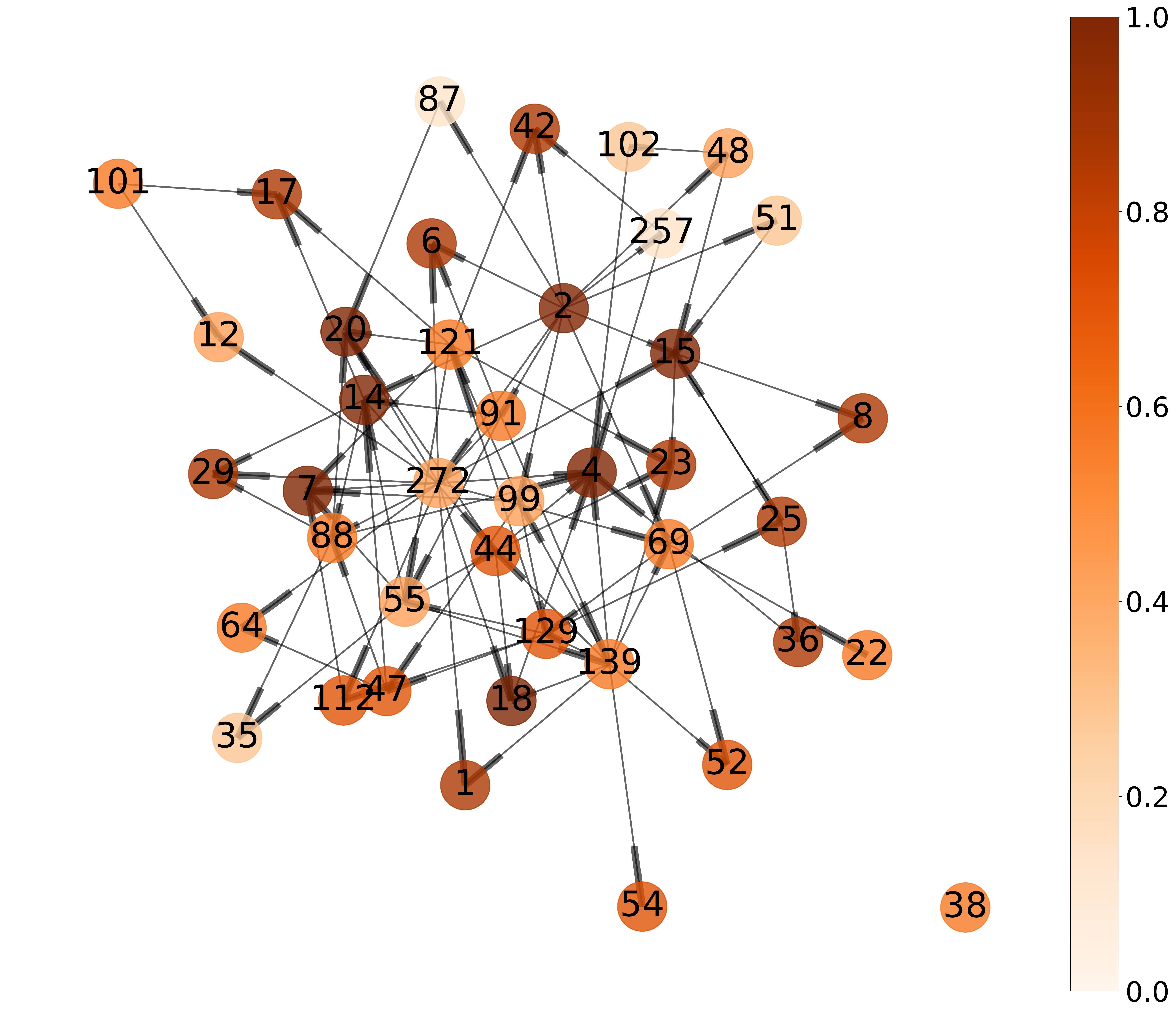}
\caption{Estimation of individual activation probabilities at $T=24$ hours using the proposed PronNet SDE method (left) and the empirical activation probabilities exhibited by the testing data (right) from source set $S=\{2\}$ on the Weibo-Net-Tweet dataset. Thicker ends of edges indicate inbounds. Nodes with $<50\%$ activation probabilities in both plots are removed for cleaner looks. }
\label{fig:realnet}
\end{figure}

\section{Conclusions}
\label{sec:conclusion}
In this paper, we proposed to reformulate propagations on heterogenous networks using jump SDE system.
We also develope an efficient numerical schemes to solve the SDE system and predict influence etc. 
Using a series of numerical experiments on variety of networks and propagation models, we show that our proposed method is accurate and efficient in influence prediction when compared to the state-of-the-art methods. 
Moreover, we showed that the proposed method can be readily modified and applied to more general propagation models which cannot be handled by any existing methods.

\bibliographystyle{abbrv}
\bibliography{/Users/xye/Dropbox/Documents/Library/library}

\begin{thebibliography}{10}

\bibitem{Bodo:2016a}
{\'A}.~Bod{\'o}, G.~Y. Katona, and P.~L. Simon.
\newblock Sis epidemic propagation on hypergraphs.
\newblock {\em Bulletin of mathematical biology}, 78(4):713--735, 2016.

\bibitem{Boguna:2002a}
M.~Bogun{\'a} and R.~Pastor-Satorras.
\newblock Epidemic spreading in correlated complex networks.
\newblock {\em Physical Review E}, 66(4):047104, 2002.

\bibitem{Cator:2012a}
E.~Cator and P.~Van~Mieghem.
\newblock Second-order mean-field susceptible-infected-susceptible epidemic
  threshold.
\newblock {\em Physical review E}, 85(5):056111, 2012.

\bibitem{Chow:2018a}
S.-N. Chow, X.~Ye, H.~Zha, and H.~Zhou.
\newblock Influence prediction for continuous-time information propagation on
  networks.
\newblock {\em Networks and Heterogenous Media}, 13(4):567--583, 2018.

\bibitem{Cohen:1997a}
E.~Cohen.
\newblock Size-estimation framework with applications to transitive closure and
  reachability.
\newblock {\em Journal of Computer and System Sciences}, 55(3):441--453, 1997.

\bibitem{Du:2014a}
N.~Du, Y.~Liang, M.-F. Balcan, and L.~Song.
\newblock Influence function learning in information diffusion networks.
\newblock In {\em International Conference on Machine Learning}, pages
  2016--2024, 2014.

\bibitem{Du:2013a}
N.~Du, L.~Song, M.~Gomez-Rodriguez, and H.~Zha.
\newblock Scalable influence estimation in continuous-time diffusion networks.
\newblock In {\em Advances in Neural Information Processing Systems}, pages
  3147--3155, 2013.

\bibitem{Farajtabar:2016b}
M.~Farajtabar, X.~Ye, S.~Harati, L.~Song, and H.~Zha.
\newblock Multistage campaigning in social networks.
\newblock In {\em Advances in Neural Information Processing Systems}, pages
  4718--4726, 2016.

\bibitem{Givan:2011a}
O.~Givan, N.~Schwartz, A.~Cygelberg, and L.~Stone.
\newblock Predicting epidemic thresholds on complex networks: Limitations of
  mean-field approaches.
\newblock {\em Journal of theoretical biology}, 288:21--28, 2011.

\bibitem{Gomez-Rodriguez:2012b}
M.~Gomez-Rodriguez, J.~Leskovec, and A.~Krause.
\newblock Inferring networks of diffusion and influence.
\newblock {\em ACM Transactions on Knowledge Discovery from Data (TKDD)},
  5(4):21, 2012.

\bibitem{Gomez-Rodriguez:2012c}
M.~Gomez~Rodriguez, B.~Sch{\"o}lkopf, L.~J. Pineau, et~al.
\newblock Influence maximization in continuous time diffusion networks.
\newblock In {\em 29th International Conference on Machine Learning (ICML
  2012)}, pages 1--8. International Machine Learning Society, 2012.

\bibitem{Kempe:2003a}
D.~Kempe, J.~Kleinberg, and {\'E}.~Tardos.
\newblock Maximizing the spread of influence through a social network.
\newblock In {\em Proceedings of the ninth ACM SIGKDD international conference
  on Knowledge discovery and data mining}, pages 137--146. ACM, 2003.

\bibitem{Kempe:2005a}
D.~Kempe, J.~Kleinberg, and {\'E}.~Tardos.
\newblock Influential nodes in a diffusion model for social networks.
\newblock In {\em Automata, languages and programming}, pages 1127--1138.
  Springer, 2005.

\bibitem{Liu:2016b}
J.~Liu, P.~E. Par{\'e}, A.~Nedi{\'c}, C.~Y. Tang, C.~L. Beck, and
  T.~Ba{\c{s}}ar.
\newblock On the analysis of a continuous-time bi-virus model.
\newblock In {\em Decision and Control (CDC), 2016 IEEE 55th Conference on},
  pages 290--295. IEEE, 2016.

\bibitem{Maginnis:2014a}
P.~A. Maginnis, M.~West, and G.~E. Dullerud.
\newblock Exact simulation of continuous time markov jump processes with
  anticorrelated variance reduced monte carlo estimation.
\newblock In {\em Decision and Control (CDC), 2014 IEEE 53rd Annual Conference
  on}, pages 3401--3407. IEEE, 2014.

\bibitem{Miller:2014a}
J.~C. Miller and I.~Z. Kiss.
\newblock Epidemic spread in networks: Existing methods and current challenges.
\newblock {\em Mathematical modelling of natural phenomena}, 9(2):4, 2014.

\bibitem{Narasimhan:2015a}
H.~Narasimhan, D.~C. Parkes, and Y.~Singer.
\newblock Learnability of influence in networks.
\newblock In {\em Advances in Neural Information Processing Systems}, pages
  3186--3194, 2015.

\bibitem{Newman:2010a}
M.~Newman.
\newblock {\em Networks: an introduction}.
\newblock Oxford University Press, 2010.

\bibitem{Pastor-Satorras:2015a}
R.~Pastor-Satorras, C.~Castellano, P.~Van~Mieghem, and A.~Vespignani.
\newblock Epidemic processes in complex networks.
\newblock {\em Reviews of modern physics}, 87(3):925, 2015.

\bibitem{Platen:2010a}
E.~Platen and N.~Bruti-Liberati.
\newblock {\em Numerical Solution of Stochastic Differential Equations with
  Jumps in Finance}.
\newblock Springer Berlin Heidelberg, 2010.

\bibitem{Sahneh:2011a}
F.~D. Sahneh and C.~Scoglio.
\newblock Epidemic spread in human networks.
\newblock In {\em Decision and Control and European Control Conference
  (CDC-ECC), 2011 50th IEEE Conference on}, pages 3008--3013. IEEE, 2011.

\bibitem{Sahneh:2013a}
F.~D. Sahneh, C.~Scoglio, and P.~Van~Mieghem.
\newblock Generalized epidemic mean-field model for spreading processes over
  multilayer complex networks.
\newblock {\em IEEE/ACM Transactions on Networking}, 21(5):1609--1620, 2013.

\bibitem{Scaman:2015a}
K.~Scaman, R.~Lemonnier, and N.~Vayatis.
\newblock Anytime influence bounds and the explosive behavior of
  continuous-time diffusion networks.
\newblock In {\em Advances in Neural Information Processing Systems}, pages
  2026--2034, 2015.

\bibitem{Taylor:2009a}
A.~Taylor and D.~J. Higham.
\newblock Contest: A controllable test matrix toolbox for matlab.
\newblock {\em ACM Transactions on Mathematical Software (TOMS)}, 35(4):26,
  2009.

\bibitem{Van-Mieghem:2013a}
P.~Van~Mieghem and J.~Omic.
\newblock In-homogeneous virus spread in networks.
\newblock {\em arXiv preprint arXiv:1306.2588}, 2013.

\bibitem{Van-Mieghem:2009a}
P.~Van~Mieghem, J.~Omic, and R.~Kooij.
\newblock Virus spread in networks.
\newblock {\em Networking, IEEE/ACM Transactions on}, 17(1):1--14, 2009.

\bibitem{Van-Mieghem:2013b}
P.~Van~Mieghem and R.~Van~de Bovenkamp.
\newblock Non-markovian infection spread dramatically alters the
  susceptible-infected-susceptible epidemic threshold in networks.
\newblock {\em Physical review letters}, 110(10):108701, 2013.

\bibitem{Van-Mieghem:2015a}
P.~Van~Mieghem and R.~van~de Bovenkamp.
\newblock Accuracy criterion for the mean-field approximation in
  susceptible-infected-susceptible epidemics on networks.
\newblock {\em Physical Review E}, 91(3):032812, 2015.

\bibitem{Vergeer:2013a}
M.~Vergeer, L.~Hermans, and S.~Sams.
\newblock Online social networks and micro-blogging in political campaigning
  the exploration of a new campaign tool and a new campaign style.
\newblock {\em Party Politics}, 19(3):477--501, 2013.

\bibitem{Wortman:2008a}
J.~Wortman.
\newblock Viral marketing and the diffusion of trends on social networks.
\newblock {\em Tech Report}, 2008.

\bibitem{Yang:2012b}
Z.~Yang and T.~Zhou.
\newblock Epidemic spreading in weighted networks: an edge-based mean-field
  solution.
\newblock {\em Physical Review E}, 85(5):056106, 2012.

\end{thebibliography}

\end{document}